\theoremstyle{plain}
\newtheorem{theorem}{Theorem}[section]
\newtheorem{lemma}[theorem]{Lemma}
\newtheorem{corollary}[theorem]{Corollary}
\newtheorem{proposition}[theorem]{Proposition}
\newtheorem{remark}[theorem]{Remark}
\theoremstyle{definition}
\newtheorem{definition}[theorem]{Definition}
\newcommand{\kah}{K\"{a}hler }
\newcommand{\idd}{i\partial\overline{\partial}}
\subjclass[2020]{14F17, 14F18, 32L10, 32L20}
\keywords{ 
$L^2$-estimates, singular Hermitian metrics, cohomology vanishing.}
\begin{document}
\title
[Bogomolov-Sommese type vanishing for holomorphic vector bundles]
{Bogomolov-Sommese type vanishing theorem for holomorphic vector bundles equipped with positive singular Hermitian metrics}
\author{Yuta Watanabe}
\date{}

\begin{abstract}
   In this article, we obtain the Bogomolov-Sommese type vanishing theorem involving multiplier ideal sheaves for big line bundles.
   We define a dual Nakano semi-positivity of singular Hermitian metrics with $L^2$-estimates
   and prove a vanishing theorem which is a generalization of the Bogomolov-Sommese type vanishing theorem to holomorphic vector bundles. 
\end{abstract}

\vspace{-5mm}

\maketitle

\tableofcontents

\vspace{-8mm}

\section{Introduction}\label{section:1}

Positivity notions for holomorphic vector bundles and multiplier ideal sheaves 
play an important role in several complex variables and complex algebraic geometry. 
For holomorphic vector bundles, singular Hermitian metrics and its positivity are very interesting subjects.
On holomorphic line bundles, positivity of a singular Hermitian metric corresponds to plurisubharmonicity of the local weight and the multiplier ideal sheaf is an invariant of the singularities of the plurisubharmonic functions.

Let $X$ be a complex manifold and $\varphi$ be a plurisubharmonic function. Let $\mathscr{I}(\varphi)$ be the sheaf of germs of holomorphic functions $f$ such that $|f|^2e^{-\varphi}$ is locally integrable which is called the multiplier ideal sheaf.
Let $h$ be a singular Hermitian metric on a holomorphic line bundle $L$ over $X$ and $\varphi$ be the local weight of $h$, i.e. $h=e^{-\varphi}$. Then we define the multiplier ideal sheaf for $h$ by $\mathscr{I}(h):=\mathscr{I}(\varphi)$.

For a holomorphic line bundle $L$ over a projective manifold $X$ of $\mathrm{dim}\,X=n$,
the famous Bogomolov-Sommese vanishing theorem \cite{Bog78} asserts that $H^0(X,\Omega_X^p\otimes L^*)=0$ for $p<\kappa(L)$.
In particular, if $L$ is big then we have that 
\begin{align*}
    H^n(X,\Omega_X^p\otimes L)=0
\end{align*}
for $p>0$ by taking the dual.
The Bogomolov-Sommese type vanishing theorem have been studied in the direction of weakening the positivity (cf.\,\cite{Mou98},\,\cite{Wu20}). 

For big line bundles, we first obtain the following Bogomolov-Sommese type vanishing theorem which involves a multiplier ideal sheaf as in the Demailly-Nadel vanishing theorem (cf.\,\cite{Nad89}, \cite{Dem93})
and which is an extension of the Demailly-Nadel vanishing theorem to $(p,n)$-forms.

\begin{theorem}\label{Ext Nadel V-thm}
    Let $X$ be a projective manifold of dimension $n$ equipped with a \kah metric $\omega$ on $X$. 
    Let $L$ be a holomorphic line bundle on $X$ equipped with a singular Hermitian metric $h$.
    We assume that 
    \begin{align*}
        i\Theta_{L,h}\geq\varepsilon\omega
    \end{align*}
    in the sense of currents for some $\varepsilon>0$. Then we have that
    \begin{align*}
        H^n(X,\Omega_X^p\otimes L\otimes\mathscr{I}(h))&=0
    \end{align*}
    for $p>0$. 
\end{theorem}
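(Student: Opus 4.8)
The plan is to realise the cohomology group by an $L^2$-Dolbeault resolution and reduce the statement to surjectivity of $\bar\partial$ in the top bidegree. First I would introduce, for each $q$, the sheaf $\mathscr{L}^{p,q}$ of $(p,q)$-forms with values in $L$ having measurable coefficients such that $|u|^2_{h,\omega}$ and $|\bar\partial u|^2_{h,\omega}$ are locally integrable. Because $\omega$ is smooth and nondegenerate, the factor $\Lambda^pT^*_X$ is irrelevant to local $L^2$-integrability, so the kernel of $\bar\partial$ on $\mathscr{L}^{p,0}$ consists precisely of holomorphic $(p,0)$-forms with values in $L$ whose coefficients lie locally in $\mathscr{I}(h)$; that is, it equals $\Omega_X^p\otimes L\otimes\mathscr{I}(h)$. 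Exactness of $(\mathscr{L}^{p,\bullet},\bar\partial)$ in positive degrees is the local $L^2$-solvability of $\bar\partial$ on small pseudoconvex balls with a plurisubharmonic weight (Hörmander–Demailly). Since each $\mathscr{L}^{p,q}$ is a module over $C^\infty_X$, hence acyclic, one gets
\[
H^n(X,\Omega_X^p\otimes L\otimes\mathscr{I}(h))\cong \Gamma(X,\mathscr{L}^{p,n})\,/\,\bar\partial\,\Gamma(X,\mathscr{L}^{p,n-1}),
\]
so it suffices to prove that every $g\in\Gamma(X,\mathscr{L}^{p,n})$ — automatically $\bar\partial$-closed, and, $X$ being compact, globally $L^2$ with respect to $h$ and $\omega$ — is of the form $\bar\partial f$ with $f\in\Gamma(X,\mathscr{L}^{p,n-1})$.

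The conceptual heart is the Bochner–Kodaira–Nakano identity combined with the elementary observation that on $(p,n)$-forms the curvature term is \emph{strictly} positive once $p>0$. Diagonalising $i\Theta_{L,h}$ with respect to $\omega$ at a point with eigenvalues $\gamma_j\ge\varepsilon$, the operator $[i\Theta_{L,h},\Lambda]$ acts on the component of bidegree $(p,n)$ with multi-indices $(I,J)$ by multiplication by $\sum_{j\in I}\gamma_j+\sum_{j\in J}\gamma_j-\sum_j\gamma_j$; since $J$ is necessarily the full index set, the last two sums cancel, leaving $\sum_{j\in I}\gamma_j\ge\varepsilon p>0$. Thus $[i\Theta_{L,h},\Lambda]\ge\varepsilon p\cdot\mathrm{Id}$ on $(p,n)$-forms, and for smooth compactly supported such $u$ one obtains $\|\bar\partial^*_h u\|^2\ge\langle[i\Theta_{L,h},\Lambda]u,u\rangle\ge\varepsilon p\,\|u\|^2$ (using $\bar\partial u=0$). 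Setting $A=[i\Theta_{L,h},\Lambda]$, the estimate $\int\langle A^{-1}g,g\rangle\le(\varepsilon p)^{-1}\int|g|^2_{h,\omega}\,dV_\omega$ holds, and the standard Hahn–Banach/Riesz argument in $L^2$ produces a solution of $\bar\partial f=g$ with $\|f\|^2\le(\varepsilon p)^{-1}\|g\|^2$. This is exactly the Akizuki–Nakano vanishing mechanism in the extremal degree $q=n$, carried out here with a singular weight.

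The hard part will be to run the $L^2$-existence theorem rigorously with the \emph{singular} metric $h$. I would restrict to $X\setminus Z$, where $Z$ is the closed (pluripolar, measure-zero) set outside of which the local weight of $h$ is locally bounded; $X\setminus Z$ carries a complete \kah metric, so the Bochner–Kodaira–Nakano inequality is valid there on the domains of $\bar\partial$ and $\bar\partial^*_h$, after a standard regularisation replacing $i\Theta_{L,h}$ by a smooth form with curvature still $\ge(\varepsilon-\delta)\omega$ and letting $\delta\downarrow 0$. For $g\in\Gamma(X,\mathscr{L}^{p,n})$ the finiteness $\int_{X\setminus Z}\langle A^{-1}g,g\rangle\le(\varepsilon p)^{-1}\int_X|g|^2_{h,\omega}\,dV_\omega<\infty$ then yields $f$ on $X\setminus Z$ with $\bar\partial f=g$ and $\int_{X\setminus Z}|f|^2_{h,\omega}\,dV_\omega<\infty$. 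Finally, since $Z$ is negligible for $L^2_{\mathrm{loc}}$-forms — both $f$ and $\bar\partial f=g$ being $L^2_{\mathrm{loc}}$ across $Z$ — the equation $\bar\partial f=g$ extends to all of $X$ and $f$ is a global section of $\mathscr{L}^{p,n-1}$, which finishes the argument. All the ingredients in this last paragraph (choice of $Z$, the complete \kah metric, the regularisation of the curvature current, and the extension-across-$Z$ lemma) are standard from the proof of the Demailly–Nadel vanishing theorem, but they must be adapted from $(n,q)$-forms to $(p,n)$-forms; that adaptation, together with keeping track of the constant $\varepsilon p$, is where the care is needed.
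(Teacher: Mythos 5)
Your global architecture coincides with the paper's: the $L^2$-Dolbeault resolution by the sheaves $\mathscr{L}^{p,q}_{L,h}$, the identification $\ker\overline{\partial}_0=\Omega_X^p\otimes L\otimes\mathscr{I}(h)$, and the pointwise computation that $[i\Theta_{L,h},\Lambda_\omega]\geq p\varepsilon\,\mathrm{Id}$ on $(p,n)$-forms are all correct and are exactly how the paper reduces the theorem to the global $L^2$-existence statement (Theorem \ref{line bdl big L2-estimate for (p,n)-forms}). The gap is in your proof of that existence statement. You propose to delete the unboundedness locus $Z$ of the weight and to invoke ``a standard regularisation replacing $i\Theta_{L,h}$ by a smooth form with curvature still $\geq(\varepsilon-\delta)\omega$'' on $X\setminus Z$. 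No such regularisation is available there: $X\setminus Z$ is merely an open subset of a compact manifold, not Stein, and a plurisubharmonic weight on a general complex manifold cannot be smoothed by convolution while preserving plurisubharmonicity, let alone a uniform curvature lower bound; note also that where the weight is locally bounded it is still not smooth, so the Bochner--Kodaira inequality does not apply as stated. This is precisely the difficulty the paper's argument is built to avoid: projectivity is used (via GAGA) to remove a hypersurface $H$ so that $X\setminus H$ is Stein and $L$ is trivial there; the weight then becomes a global strictly plurisubharmonic function on a closed submanifold of $\mathbb{C}^N$, which can be genuinely convolved after the Docquier--Grauert retraction (Proposition \ref{L2-estimate for (p,n)-forms on Stein}), with the curvature bound degrading only to $\varepsilon_\nu\nearrow\varepsilon$, and the solution is extended across $H$ by Lemma \ref{Ext d-equation for hypersurface}. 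If you insist on avoiding the Stein reduction you would need Demailly's equisingular approximation theorem, which brings a genuine loss of positivity and approximants with analytic singularities --- a substantially different and more delicate argument than the one you sketch.

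A second ingredient you defer but cannot omit: in the complete-metric approximation ($\omega_\delta=\omega+\delta\widehat{\omega}\geq\omega$, $\delta\downarrow0$) one needs the monotonicity
\begin{align*}
\langle[i\Theta_{L,h},\Lambda_{\omega_\delta}]^{-1}f,f\rangle_{h,\omega_\delta}\,dV_{\omega_\delta}\leq\langle[i\Theta_{L,h},\Lambda_{\omega}]^{-1}f,f\rangle_{h,\omega}\,dV_\omega
\end{align*}
for $(p,n)$-forms. For $(n,q)$-forms this is Demailly's classical lemma, but for $(p,n)$-forms it is not a formal consequence of that case and requires the separate computation carried out in Lemma \ref{uniformity of (p,n)-forms} (together with Lemma \ref{uniformity of (p,q)-forms not dV} for the weak-limit bookkeeping). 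Labelling this ``where the care is needed'' does not discharge it; it is a substantive step of the proof rather than a routine adaptation.
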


Theorem \ref{Ext Nadel V-thm} is shown using the $L^2$-estimate theorem (see Theorem \ref{line bdl big L2-estimate for (p,n)-forms}) for $(p,n)$-forms and a fine resolution of $\Omega_X^p\otimes L\otimes \mathscr{I}(h)$.

Notions of singular Hermitian metrics for holomorphic vector bundles were introduced and investigated (cf.\,\cite{BP08}, \cite{deC98}). 
However, it is known that we cannot always define the curvature currents with measure coefficients (see \cite{Rau15}). 
Hence, Griffiths semi-negativity or semi-positivity (\cite{BP08},\,\cite{Rau15}, see Definition \ref{sing Griffiths def}) and Nakano semi-negativity (\cite{Rau15}, see Definition \ref{def Nakano semi-negative as Raufi}) is defined without using the curvature currents by using the properties of plurisubharmonic functions.
Here, Griffiths semi-positivity can be returned to Griffiths semi-negativity by considering the duality, but this method cannot be used for Nakano semi-positivity because the dual of a Nakano negative bundle in general is not Nakano positive.

After that, Nakano semi-positivity for singular Hermitian metrics (see Definition \ref{def Nakano semi-posi sing}) was defined in \cite{Ina21b}, which establishes the singular-type Nakano vanishing theorem, i.e. the Demailly-Nadel type vanishing theorem extended to holomorphic vector bundles.
This definition is based on characterizations of positivity using $L^2$-estimates for $(n,1)$-forms (cf.\,\cite{DWZZ18}, \cite{DNWZ20}, \cite{HI20}) and does not require the use of curvature currents.
In \cite{Wat21}, these characterizations of positivity using $L^2$-estimates for $(n,1)$-forms are extended to $(n,q)$ and $(p,n)$-forms. 

Throughout this paper, we let $X$ be an $n$-dimensional complex manifold and $E\to X$ be a holomorphic vector bundle of finite rank $r$.
From the definition of Nakano semi-negativity (\cite{Rau15}, see Definition \ref{def Nakano semi-negative as Raufi}), we naturally define dual Nakano semi-positive singular Hermitian metrics (see Definition \ref{def dual Nakano semi-posi sing}) with characterization using $L^2$-estimates (see Proposition \ref{dual Nakano semi-posi sing then has L2-estimate condition}). 
Then, by using the proof method of Theorem \ref{Ext Nadel V-thm}, we obtain the following vanishing theorem 
which is a generalization of dual Nakano vanishing theorem to singular Hermitian metrics and of the Bogomolov-Sommese vanishing theorem to holomorphic vector bundles.

\begin{theorem}\label{sing dual Nakano V-thm}
    Let $X$ be a projective manifold equipped with a \kah metric $\omega_X$ on $X$.
    We assume that $(E,h)$ is strictly dual Nakano $\delta_{\omega_X}$-positive in the sense of Definition \ref{sing strictly positive of dual Nakano} on $X$ and $\mathrm{det}\,h$ is bounded on $X$.
    Then we have the following vanishing:
    \begin{align*}
        H^n(X,\Omega_X^p\otimes E)=0,
    \end{align*}
    for $p>0$.
\end{theorem}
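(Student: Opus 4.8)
The plan is to follow the proof of Theorem~\ref{Ext Nadel V-thm}, with the Demailly--Nadel $L^2$-theory for the line bundle $L\otimes\mathscr{I}(h)$ replaced by the $L^2$-estimate attached to strict dual Nakano positivity of $(E,h)$ through Proposition~\ref{dual Nakano semi-posi sing then has L2-estimate condition}. As $X$ is projective, hence compact, I would compute $H^n(X,\Omega_X^p\otimes E)$ by means of a fine resolution adapted to $h$. For $0\le q\le n$ let $\mathscr{L}^{p,q}_h$ denote the sheaf of germs of measurable $E$-valued $(p,q)$-forms $u$ such that both $u$ and $\bar\partial u$ are locally square integrable with respect to $h$ and $\omega_X$. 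Each $\mathscr{L}^{p,q}_h$ is a module over the sheaf of germs of smooth functions, hence fine, and $\bar\partial$ turns $(\mathscr{L}^{p,\bullet}_h,\bar\partial)$ into a complex of sheaves.

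First I would verify that this complex is a resolution of $\Omega_X^p\otimes E$. Exactness in positive degree is the local assertion that a $\bar\partial$-closed, locally $L^2$ form admits, on a small polydisc, a locally $L^2$ primitive; this follows from the standard $L^2$-existence theorem for $\bar\partial$ on bounded pseudoconvex domains together with the plurisubharmonicity built into the definition of (strict) dual Nakano positivity (Definitions~\ref{def dual Nakano semi-posi sing} and~\ref{sing strictly positive of dual Nakano}), and requires no a priori curvature current. The degree-zero kernel is identified with $\Omega_X^p\otimes E$ using hypoellipticity of $\bar\partial$ together with the fact that every local holomorphic $E$-valued $p$-form is locally square integrable for $h$; it is precisely here that the hypothesis that $\det h$ is bounded on $X$ intervenes, so that no proper multiplier subsheaf of $\mathscr{O}(E)$ appears. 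Consequently, since $X$ is compact,
\[
 H^n(X,\Omega_X^p\otimes E)\;\cong\;\Gamma(X,\mathscr{L}^{p,n}_h)\,\big/\,\bar\partial\,\Gamma(X,\mathscr{L}^{p,n-1}_h),
\]
and here $\Gamma(X,\mathscr{L}^{p,n}_h)=L^2(X,\Lambda^{p,n}T_X^*\otimes E)$, since every $(p,n)$-form is automatically $\bar\partial$-closed.

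Now take $g\in L^2(X,\Lambda^{p,n}T_X^*\otimes E)$. Since $(E,h)$ is strictly dual Nakano $\delta_{\omega_X}$-positive, Proposition~\ref{dual Nakano semi-posi sing then has L2-estimate condition}, in its strict form, produces $u\in L^2(X,\Lambda^{p,n-1}T_X^*\otimes E)$ with $\bar\partial u=g$ and $\|u\|^2\le C\|g\|^2$, the constant $C$ coming from the uniform lower bound $\delta_{\omega_X}$ on the curvature operator acting on $(p,n)$-forms; the range needed for the $L^2$-characterization of positivity, namely $(p,q)$-forms with $p+q>n$, is met because $q=n$ and $p>0$. Then $\bar\partial u=g\in L^2$, so $u\in\Gamma(X,\mathscr{L}^{p,n-1}_h)$, and therefore every class in $H^n(X,\Omega_X^p\otimes E)$ vanishes.

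The main obstacle is the resolution step rather than the final estimate: one must check carefully that $(\mathscr{L}^{p,\bullet}_h,\bar\partial)$ is genuinely a fine resolution of $\Omega_X^p\otimes E$ under the stated hypotheses --- in particular that $\det h$ bounded forces local holomorphic $E$-valued $p$-forms into $\mathscr{L}^{p,0}_h$, and that local $\bar\partial$-solvability with $L^2$ bounds survives across the singular set of $h$, using only the plurisubharmonicity in Definition~\ref{def dual Nakano semi-posi sing}. Once this is in place, the vanishing is immediate from the global $L^2$-estimate, in exact parallel with the proof of Theorem~\ref{Ext Nadel V-thm}.
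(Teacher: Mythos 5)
Your overall strategy coincides with the paper's: resolve $\Omega_X^p\otimes E$ by the fine sheaves $\mathscr{L}^{p,\bullet}_{E,h}$ of locally $L^2$ forms, use boundedness of $\det h$ (together with the local boundedness of the entries of $\widehat{h^*}$ coming from Griffiths semi-negativity of $h^*$) to identify the degree-zero kernel with $\Omega_X^p\otimes\mathscr{O}(E)$, and kill the top cohomology by a global $L^2$-estimate attached to strict dual Nakano positivity. However, the step you yourself flag as ``the main obstacle'' is a genuine gap as you have left it. Local exactness of the complex in the intermediate degrees $1\le q\le n-1$ does \emph{not} follow from ``the standard $L^2$-existence theorem together with the plurisubharmonicity built into Definition \ref{def dual Nakano semi-posi sing}'': the $L^2$-characterization of dual Nakano semi-positivity (Proposition \ref{dual Nakano semi-posi sing then has L2-estimate condition}) only controls $(p,n)$-forms, and there is no off-the-shelf $L^2$-existence theorem for $E$-valued $(p,q)$-forms with respect to a singular metric. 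The paper's mechanism is Corollary \ref{Cor ext Hor thm to vect bdl}, whose proof twists by $\det E$, invokes the singular Demailly--Skoda theorem to make $h\otimes\det h$ Nakano semi-positive in the $L^2$ sense, rewrites $(p,q)$-forms valued in $E\otimes\det E$ as $(n,q)$-forms valued in $F=E\otimes\det E\otimes\Lambda^{n-p}T_U$ with a strictly Nakano positive metric, applies Inayama's $L^2$-theorem there, and finally uses the boundedness of $\det h$ to remove the $\det E$ twist. Without some such detour your resolution is not known to be exact in degrees $1\le q\le n-1$.

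A second, smaller issue: you apply Proposition \ref{dual Nakano semi-posi sing then has L2-estimate condition} ``in its strict form'' directly on $X$ to solve $\bar\partial u=g$ globally, but that proposition is a statement on Stein coordinate charts, and a compact projective manifold is not Stein. The globalization is Theorem \ref{L2-estimate for dual Nakano positive}: one passes to a Zariski-open Stein subset $S$ on which $E$ is trivial and $\omega_X$ has a global potential (Serre's GAGA), takes $\psi=\delta\varphi$ in the Stein-local estimate, and extends the solution across the complementary hypersurface by Lemma \ref{Ext d-equation for hypersurface}. With Corollary \ref{Cor ext Hor thm to vect bdl} supplying the local exactness and Theorem \ref{L2-estimate for dual Nakano positive} supplying the global surjectivity in top degree, your argument closes and agrees with the paper's.
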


We get the following result which is a generalization of the Griffiths vanishing theorem (cf.\,[Dem-book,\,ChapterVII,\,Corollary\,9.4], \cite{LSY13}) to singular Hermitian metrics 
and which can also be considered as a generalization of the Demailly-Nadel vanishing theorem and Theorem \ref{Ext Nadel V-thm} to holomorphic vector bundles.
Here, the generalization up to $(n,q)$-forms for singular Hermitian metrics is already known in \cite{Ina21b}.

\begin{theorem}\label{Ext sing Griffiths V-thm}
    Let $X$ be a projective manifold equipped with a \kah metric $\omega_X$ on $X$. We assume that $(E,h)$ is strictly Griffiths $\delta_{\omega_X}$-positive in the sense of Definition \ref{sing strictly positive of Grif and Nakano} on $X$.
    Then we have the following vanishing:
    \begin{align*}
        H^q(X,K_X\otimes\mathscr{E}(h\otimes\mathrm{det}\,h))&=0\\
        H^n(X,\Omega_X^p\otimes\mathscr{E}(h\otimes\mathrm{det}\,h))&=0,
    \end{align*}
    for $p,q>0$.
\end{theorem}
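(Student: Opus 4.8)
The plan is to reduce the Griffiths-positive case to the dual Nakano-positive case via the classical tensor-product trick, and then invoke Theorem \ref{sing dual Nakano V-thm} together with the machinery behind Theorem \ref{Ext Nadel V-thm}. First I would recall that on a holomorphic vector bundle $(E,h)$ of rank $r$, Griffiths semi-positivity of $h$ is equivalent to dual Nakano semi-positivity of the twisted bundle $E\otimes\det E$ equipped with $h\otimes\det h$; this is the vector-bundle analogue of the fact that $E\otimes\det E$ is Nakano positive when $E$ is Griffiths positive (Demailly–Skoda), transported to the dual side. In the singular setting this passage must be carried out at the level of the $L^2$-characterization: I would verify that if $(E,h)$ is strictly Griffiths $\delta_{\omega_X}$-positive in the sense of Definition \ref{sing strictly positive of Grif and Nakano}, then $(E\otimes\det E, h\otimes\det h)$ is strictly dual Nakano $\delta'_{\omega_X}$-positive in the sense of Definition \ref{sing strictly positive of dual Nakano} for a suitable positive function $\delta'$, by checking the defining $L^2$-estimate inequality pointwise on the regular locus where $h$ is smooth and then passing to the limit through the approximating sequence of smooth metrics. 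Since $\det(h\otimes\det h)=(\det h)^{r+1}$ and the hypothesis already forces control on $\det h$ (strict positivity of the curvature of the determinant line bundle), the boundedness condition ``$\det h$ is bounded'' required in Theorem \ref{sing dual Nakano V-thm} should be arranged after a correction, or absorbed into the multiplier sheaf $\mathscr{E}(h\otimes\det h)$ that appears in the statement.

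With this reduction in hand, the second vanishing $H^n(X,\Omega_X^p\otimes\mathscr{E}(h\otimes\det h))=0$ for $p>0$ follows by running the proof of Theorem \ref{sing dual Nakano V-thm} with $E$ replaced by $E\otimes\det E$ and keeping track of the multiplier sheaf: the key input is the $L^2$-estimate theorem for $(p,n)$-forms (the vector-bundle analogue of Theorem \ref{line bdl big L2-estimate for (p,n)-forms}), applied to $\dbar$ acting on $(p,n)$-forms with values in $E\otimes\det E$, combined with a fine resolution of $\Omega_X^p\otimes\mathscr{E}(h\otimes\det h)$ by sheaves of $(p,\bullet)$-forms that are locally $L^2$ with respect to the singular metric. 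One solves the $\dbar$-equation on a suitable exhaustion and uses the strict positivity $\delta'_{\omega_X}$ to bound the norm, exactly as in Theorems \ref{Ext Nadel V-thm} and \ref{sing dual Nakano V-thm}; since $(p,n)$ is top degree in the antiholomorphic variable, no positivity of $\omega_X$ beyond \kah-ness is needed and the argument closes.

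For the first vanishing $H^q(X,K_X\otimes\mathscr{E}(h\otimes\det h))=0$ for all $q>0$, I would instead invoke the singular Nakano vanishing theorem of \cite{Ina21b} (the Demailly–Nadel type statement for vector bundles): strict Griffiths $\delta_{\omega_X}$-positivity of $(E,h)$ implies, again by the Demailly–Skoda tensor trick adapted to singular metrics, that $(E\otimes\det E,h\otimes\det h)$ is strictly Nakano $\delta'_{\omega_X}$-positive in the relevant $L^2$ sense, and then the $(n,q)$-vanishing for $K_X\otimes E\otimes\det E$ with its multiplier sheaf is precisely what that theorem delivers. The main obstacle I anticipate is the first step: making the ``Griffiths positive $\Rightarrow$ $E\otimes\det E$ dual Nakano positive'' implication rigorous at the level of the $L^2$-definitions, since one cannot manipulate curvature currents directly and must instead argue through the approximation by smooth metrics on the regular locus while controlling the multiplier sheaves and the loss in the positivity constant; handling the interaction between the determinant twist, the factor $(\det h)^{r+1}$, and the boundedness/integrability hypotheses is where the care is needed.
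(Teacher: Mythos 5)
Your proposal follows essentially the same route as the paper: the singular Demailly--Skoda twist (strict Griffiths $\delta_{\omega_X}$-positivity of $h$ yielding $L^2$-type strict Nakano and dual Nakano positivity of $h\otimes\mathrm{det}\,h$, i.e. Theorem \ref{sing strictly Grif then strictly Nakano for tensor}), the global $(p,n)$ $L^2$-estimate of Theorem \ref{L2-estimate for dual Nakano positive}, and an $L^2$-Dolbeault resolution of $\Omega_X^p\otimes\mathscr{E}(h\otimes\mathrm{det}\,h)$, with the $(n,q)$-vanishing quoted from the singular Nakano vanishing theorem of \cite{Ina21b}. The one step you leave underspecified is the local exactness of that resolution in intermediate bidegrees $(p,q)$ with $q<n$, which dual Nakano positivity of $(p,n)$-type alone does not provide: the paper's Theorem \ref{Ext Hor Thm to vect bdl} supplies it by twisting with $\Lambda^{n-p}T_U$ equipped with a bounded weight so as to convert $(p,q)$-forms into $(n,q)$-forms valued in a bundle that is strictly Nakano positive in the singular sense; note also that no boundedness of $\mathrm{det}\,h$ needs to be ``arranged'' here, since the conclusion already carries the multiplier submodule sheaf $\mathscr{E}(h\otimes\mathrm{det}\,h)$.
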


\section{Proof of Theorem \ref{Ext Nadel V-thm}}

In this section, we first prove Theorem \ref{line bdl big L2-estimate for (p,n)-forms} and then use it to show Theorem \ref{Ext Nadel V-thm}.

\begin{theorem}\label{line bdl big L2-estimate for (p,n)-forms}
    Let $X$ be a projective manifold of dimension $n$ and $\omega$ be a \kah metric on $X$. Let $L$ be a holomorphic line bundle equipped with a singular Hermitian metric $h$ whose local weights are denoted $\varphi\in L^1_{loc}$, i.e. $h=e^{-\varphi}$.
    We assume that 
    \begin{align*}
        i\Theta_{L,h}=\idd\varphi\geq\varepsilon\omega
    \end{align*}
    in the sense of currents for some $\varepsilon>0$. 
    Then for any $f\in L^2_{p,n}(X,L,h,\omega)$ satisfying $\overline{\partial}f=0$, there exists $u\in L^2_{p,n-1}(X,L,h,\omega)$ such that $\overline{\partial}u=f$ and 
    \begin{align*}
        \int_X|u|^2_{h,\omega}dV_\omega\leq\frac{1}{p\varepsilon}\int_X|f|^2_{h,\omega}dV_\omega.
    \end{align*}
\end{theorem}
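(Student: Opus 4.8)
The plan is to reduce to the standard Hörmander–Demailly $L^2$-existence theorem for $\overline{\partial}$ by exploiting the isomorphism $\Lambda^{p,n}T^*_X \otimes L \cong \Lambda^{0,n}T^*_X \otimes (L \otimes \Lambda^p T^*_X)$, i.e. by treating a $(p,n)$-form with values in $L$ as an $(n,0)$-type object — here an $(0,n)$-form with values in the twisted bundle $L \otimes \Lambda^p T^*_X$. Wait: more precisely, $\Omega^p_X \otimes L$-valued $(0,n)$-forms are the same as $L$-valued $(p,n)$-forms, so the Bochner–Kodaira machinery applies with $E := \Lambda^p T^*_X \otimes L$ playing the role of the coefficient bundle and we look at $(n,n)$-forms (top degree). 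I would equip $\Lambda^p T^*_X$ with the Hermitian metric induced by $\omega$. The first step is therefore to set up the Bochner–Kodaira–Nakano identity for $E$-valued $(n,0)$-forms, where the relevant curvature term is $[i\Theta_{E,h_E}, \Lambda_\omega]$ acting on $(n,n)$-forms.

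The key computation is to show that on $(n,n)$-forms the curvature operator $[i\Theta_{E}, \Lambda_\omega]$ is bounded below by $p\varepsilon\,\mathrm{Id}$. Since $i\Theta_{E,h_E} = i\Theta_{\Lambda^p T^*_X, \omega} \otimes \mathrm{Id}_L + \mathrm{Id} \otimes i\Theta_{L,h}$, and since for an $(n,n)$-form $g$ one has the general Nakano-type inequality $\langle [i\Theta_{L,h},\Lambda_\omega] g, g\rangle \geq 0$ whenever $i\Theta_{L,h}\geq 0$ (in fact $\geq \varepsilon\omega$ gives a strict bound) — here I must be careful because $h$ is singular, so this is really the statement on the regularized/smooth level that is then passed to the limit. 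The genuinely useful positive contribution must come from the line-bundle factor: on an $(n,n)$-form, $\langle[i\Theta_{L,h},\Lambda_\omega]g,g\rangle_\omega \geq \varepsilon|g|^2_\omega$ when $i\Theta_{L,h}\geq\varepsilon\omega$. Hmm, but then where does the factor $p$ come from? It comes from the $(p,n)$-form picture directly: if $f$ is a $(p,n)$-form, then in the Bochner–Kodaira identity for $L$-valued $(p,n)$-forms the curvature term $[i\Theta_{L,h},\Lambda_\omega]$ evaluated on a $(p,n)$-form with $i\Theta_{L,h}\geq\varepsilon\omega$ satisfies the pointwise bound $\geq p\varepsilon|f|^2$ — this is the standard eigenvalue estimate (a $(p,n)$-form "contains $p$ holomorphic indices and $n$ antiholomorphic indices", and each antiholomorphic index is already saturated, so the positivity is harvested from the $p$ holomorphic slots plus... ) — actually the clean statement is Lemma-type: for a $(p,q)$-form $u$ and $i\Theta\geq\varepsilon\omega$, $\langle[i\Theta,\Lambda]u,u\rangle \geq \varepsilon(p+q-n)|u|^2$, which for $q=n$ gives exactly $p\varepsilon|u|^2$. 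This is the crucial pointwise estimate and I would cite or reprove it from the eigenvalue decomposition of $i\Theta$ with respect to $\omega$.

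With that pointwise lower bound in hand, the standard $L^2$-existence argument runs: one works first on a complete Kähler manifold (every projective manifold minus an analytic subset, or using a fixed complete metric; here $X$ is compact so $\omega$ is already complete), establishes the a priori inequality $|\langle f, v\rangle|^2 \leq \frac{1}{p\varepsilon}\int_X |f|^2 \cdot (\|\overline{\partial}^* v\|^2 + \|\overline{\partial}v\|^2)$ for $v$ in the domain of both operators, via Cauchy–Schwarz against the Bochner–Kodaira inequality $\|\overline{\partial}v\|^2 + \|\overline{\partial}^*v\|^2 \geq p\varepsilon\|v\|^2$ restricted to the orthogonal complement of $\ker\overline{\partial}$, and then invokes the Hahn–Banach / Riesz duality argument to produce $u$ with $\overline{\partial}u = f$ and $\|u\|^2 \leq \frac{1}{p\varepsilon}\|f\|^2$. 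The main obstacle is the singularity of $h$: one cannot directly write the Bochner–Kodaira identity for a non-smooth metric. The standard remedy is to exhaust $X$ by sublevel sets or, more simply here since the weight is already globally defined, to regularize $\varphi$ by a decreasing sequence of smooth plurisubharmonic (strictly, $\geq\varepsilon\omega$) weights $\varphi_\nu \downarrow \varphi$ — using that $i\Theta_{L,h}\geq\varepsilon\omega$ allows Demailly's regularization with loss controlled so that one keeps $i\Theta_{L,h_\nu}\geq(\varepsilon - \delta_\nu)\omega$ — solve the equation for each $\varphi_\nu$ with the uniform bound, and pass to the limit by a weak-compactness / monotone convergence argument (the estimates are uniform because $e^{-\varphi_\nu}\nearrow e^{-\varphi}$ on the source side and the constant $\frac{1}{p\varepsilon}$ degrades only by $o(1)$). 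I would also need to note that $L^2$ sections with respect to the larger metrics pass to $L^2$ sections for $h$ in the limit, and that $\overline{\partial}$ is closed. I expect the regularization-and-limit step to be the part requiring the most care; the curvature eigenvalue estimate, while essential, is classical.
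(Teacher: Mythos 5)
Your curvature computation is the right one: the factor $p$ does come from the pointwise eigenvalue estimate $\langle[i\Theta,\Lambda_\omega]u,u\rangle\geq(p+q-n)\varepsilon|u|^2$ specialized to $q=n$, and this is exactly the bound the paper uses. The gap is in how you handle the singularity of $h$. You propose to regularize globally on the compact manifold $X$ by a decreasing sequence of smooth metrics $h_\nu$ with $i\Theta_{L,h_\nu}\geq(\varepsilon-\delta_\nu)\omega$ and $\delta_\nu\to0$. Such a sequence does not exist in general: Demailly's regularization theorem on a compact manifold necessarily loses positivity proportional to the Lelong numbers of the weight (one only gets $i\Theta_{L,h_\nu}\geq\varepsilon\omega-\lambda_\nu\omega$ with $\lambda_\nu(x)$ decreasing to a multiple of the Lelong number $\nu(\varphi,x)$), and the theorem is stated precisely for metrics whose multiplier ideal sheaf is nontrivial, i.e.\ whose Lelong numbers may be positive. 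So the uniform a priori inequality you need at the smooth level cannot be obtained this way, and the whole limit argument collapses.

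The paper avoids this by never regularizing on the compact manifold. It uses Serre's GAGA to find a hypersurface $H$ such that $X\setminus H$ is Stein and $L|_{X\setminus H}$ is trivial; on that Stein open set the weight is a genuine globally defined (strictly) plurisubharmonic function, which \emph{can} be approximated by a decreasing sequence of smooth strictly psh functions via convolution (after embedding in $\mathbb{C}^N$ and using a holomorphic retraction), with curvature bounded below by $\varepsilon_\nu\omega$, $\varepsilon_\nu\nearrow\varepsilon$. One then exhausts by Stein sublevel sets, adds small complete K\"ahler metrics $\omega_\delta=\omega+\delta\widehat\omega$, applies H\"ormander's estimate for each smooth weight and each $\omega_\delta$ (using a monotonicity lemma for $(p,n)$-forms under $\gamma\geq\omega$ to keep the constants uniform), and takes successive weak limits. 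Finally the solution on $X\setminus H$ is extended across $H$ by the standard lemma that an $L^2_{\mathrm{loc}}$ solution of $\overline\partial u=f$ off a hypersurface solves the equation on all of $X$. You gesture at ``projective manifold minus an analytic subset'' in passing, but your argument as written relies on the global smooth approximation, which is the step that fails; to repair the proof you need the Stein--complement reduction and the extension lemma as essential ingredients, not optional ones.
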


First, we consider Theorem \ref{line bdl big L2-estimate for (p,n)-forms} on a Stein manifold (= Proposition \ref{L2-estimate for (p,n)-forms on Stein}) and consider Lemma \ref{uniformity of (p,n)-forms} to show this.
Here, the claim of the type of Theorem \ref{line bdl big L2-estimate for (p,n)-forms} and Lemma \ref{uniformity of (p,n)-forms} for $(n,q)$-forms rather than $(p,n)$-forms is already known (see [Dem-book, ChapterVIII], \cite{Dem93}).

Let $(X,\omega)$ be a Hermitian manifold and $(E,h)$ be a holomorphic Hermitian vector bundle over $X$.
We denote the curvature operator $[i\Theta_{E,h},\Lambda_\omega]$ on $\Lambda^{p,q}T^*_X\otimes E$ by $A^{p,q}_{E,h,\omega}$.
And the fact that the curvature operator $[i\Theta_{E,h},\Lambda_\omega]$ is positive (resp. semi-positive) definite on $\Lambda^{p,q}T^*_X\otimes E$ is simply written as $A^{p,q}_{E,h,\omega}>0$ (resp. $\geq 0$).

\begin{lemma}\label{uniformity of (p,n)-forms}
    Let $(E,h)$ be a holomorphic Hermitian vector bundle over $X$ and $\omega, \gamma$ be Hermitian metrics on $X$ such that $\gamma\geq\omega$. 
    For any $u\in\Lambda^{p,n}T^*_X\otimes E, p\geq1$, we have that $|u|^2_{h,\gamma}dV_\gamma\leq|u|^2_{h,\omega}dV_\omega$ and that if $A^{p,n}_{E,h,\omega}>0$ $($resp. $\geq0)$ then
    \begin{align*}
        A^{p,n}_{E,h,\gamma}>0~(\textit{resp.}~\geq0), \quad \langle(A^{p,n}_{E,h,\gamma})^{-1}u,u\rangle_{h,\gamma} dV_\gamma\leq\langle(A^{p,n}_{E,h,\omega})^{-1}u,u\rangle_{h,\omega} dV_\omega.
    \end{align*}
\end{lemma}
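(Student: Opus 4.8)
Both assertions are pointwise, so fix $x\in X$ and, using $\gamma\ge\omega$, choose a coframe $(\theta_1,\dots,\theta_n)$ of $T^{\ast}_{X,x}$ together with reals $\gamma_j\ge 1$ so that $\omega_x=i\sum_j\theta_j\wedge\bar\theta_j$ and $\gamma_x=i\sum_j\gamma_j\,\theta_j\wedge\bar\theta_j$, and an $h$-orthonormal frame $(e_1,\dots,e_r)$ of $E_x$. For the norm comparison, write $u=\bar\theta_1\wedge\cdots\wedge\bar\theta_n\wedge u'$ with $u'$ a $(p,0)$-form valued in $E$. Since $|\bar\theta_1\wedge\cdots\wedge\bar\theta_n|^2_\gamma=(\prod_j\gamma_j)^{-1}$ while $dV_\gamma=(\prod_j\gamma_j)\,dV_\omega$, one gets $|u|^2_{h,\gamma}\,dV_\gamma=|u'|^2_{h,\gamma}\,dV_\omega$ and $|u|^2_{h,\omega}\,dV_\omega=|u'|^2_{h,\omega}\,dV_\omega$; writing $u'=\sum_{|J|=p,\lambda}u'_{J,\lambda}\,\theta_J\otimes e_\lambda$ we have $|u'|^2_{h,\gamma}=\sum_{J,\lambda}|u'_{J,\lambda}|^2\bigl(\prod_{j\in J}\gamma_j\bigr)^{-1}\le\sum_{J,\lambda}|u'_{J,\lambda}|^2=|u'|^2_{h,\omega}$ because $\gamma_j\ge1$ and $|J|=p\ge1$. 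This yields $|u|^2_{h,\gamma}\,dV_\gamma\le|u|^2_{h,\omega}\,dV_\omega$.

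For the curvature operator I would use that, since $u$ has anti-holomorphic degree $n$, $i\Theta_{E,h}\wedge u=0$ for bidegree reasons, so on $(p,n)$-forms $A^{p,n}_{E,h,\omega}=[i\Theta_{E,h},\Lambda_\omega]$ reduces to $u\mapsto i\Theta_{E,h}\wedge\Lambda_\omega u$, and likewise $A^{p,n}_{E,h,\gamma}u=i\Theta_{E,h}\wedge\Lambda_\gamma u$; hence the \emph{only} difference between the two operators is $\Lambda_\gamma$ versus $\Lambda_\omega$. In the diagonalizing coframe one has $\Lambda_\gamma(\theta_a\wedge\bar\theta_b)=\gamma_a^{-1}\Lambda_\omega(\theta_a\wedge\bar\theta_b)$, and extending this, $\Lambda_\gamma$ is obtained from $\Lambda_\omega$ by weighting the removal of the $j$-th pair $\theta_j\wedge\bar\theta_j$ by the factor $\gamma_j^{-1}\le1$. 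Writing $i\Theta_{E,h}|_x=i\sum c_{jk\lambda\mu}\,\theta_j\wedge\bar\theta_k\otimes e^{\ast}_\lambda\otimes e_\mu$ (so $\bar c_{jk\lambda\mu}=c_{kj\mu\lambda}$) and $u=\sum_{|J|=p,\lambda}u_{J,\lambda}\,\theta_J\wedge\bar\theta_1\wedge\cdots\wedge\bar\theta_n\otimes e_\lambda$, carrying this through gives the explicit identities
\begin{align*}
\langle A^{p,n}_{E,h,\omega}u,u\rangle_{h,\omega}\,dV_\omega
&=\sum_{|S|=p-1}\ \sum_{j,k,\lambda,\mu}c_{jk\lambda\mu}\,u_{S\cup\{j\},\lambda}\,\overline{u_{S\cup\{k\},\mu}}\ dV_\omega,\\
\langle A^{p,n}_{E,h,\gamma}u,u\rangle_{h,\gamma}\,dV_\gamma
&=\sum_{|S|=p-1}\Bigl(\prod_{l\in S}\gamma_l\Bigr)^{-1}\ \sum_{j,k,\lambda,\mu}c_{jk\lambda\mu}\,\bigl(\gamma_j^{-1}u_{S\cup\{j\},\lambda}\bigr)\,\overline{\bigl(\gamma_k^{-1}u_{S\cup\{k\},\mu}\bigr)}\ dV_\omega,
\end{align*}
with the usual antisymmetrization convention ($u_{S\cup\{j\}}=0$ if $j\in S$). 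In particular $A^{p,n}_{E,h,\omega}>0$ means precisely that the first Hermitian form in $u$ is positive definite.

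From these two identities I would conclude as in the known $(n,q)$-statement [Dem-book, Ch.\,VIII]. Positive-(semi)definiteness of the $\gamma$-form follows from that of the $\omega$-form because the coefficients $\bigl(\prod_{l\in S}\gamma_l\bigr)^{-1}$ are positive and the substitution $u_{K,\lambda}\mapsto(\text{reweighted components})$ is injective, so $A^{p,n}_{E,h,\gamma}>0$ (resp.\ $\ge0$). For the inverse-pairing inequality I would use the Legendre duality $\langle(A^{p,n}_{E,h,\gamma})^{-1}u,u\rangle_{h,\gamma}=\sup_{v}\bigl(2\,\mathrm{Re}\,\langle u,v\rangle_{h,\gamma}-\langle A^{p,n}_{E,h,\gamma}v,v\rangle_{h,\gamma}\bigr)$, multiply by $dV_\gamma$, and compare term by term with the analogous supremum for $\omega$, inserting the two identities above together with $\gamma_j\ge1$ and $dV_\gamma=(\prod_j\gamma_j)\,dV_\omega$.

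\textbf{Main obstacle.} The delicate step is precisely the last one: showing that the reweighting by the factors $\gamma_j^{-1}\le1$ and $\bigl(\prod_{l\in S}\gamma_l\bigr)^{-1}\le1$, once combined with the volume factor $dV_\gamma=(\prod_j\gamma_j)\,dV_\omega$, preserves positivity and does not increase the dual (inverse) quadratic form. This is a purely linear-algebraic statement about the finite-dimensional space $\Lambda^{p,n}T^{\ast}_{X,x}\otimes E_x$ equipped with the two inner products and the curvature form; it is the point at which the argument of [Dem-book, Ch.\,VIII] for $(n,q)$-forms has to be transcribed, the combinatorics of the $(p,n)$-case being formally dual, under conjugation, to that of the $(n,p)$-case. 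Everything else (the bidegree vanishing $i\Theta\wedge u=0$, the comparison $\Lambda_\gamma(\theta_a\wedge\bar\theta_b)=\gamma_a^{-1}\Lambda_\omega(\theta_a\wedge\bar\theta_b)$, and the norm estimate) is routine once the simultaneous diagonalization is fixed.
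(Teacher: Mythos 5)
Your setup is sound and runs parallel to the paper's: both arguments are pointwise, both simultaneously diagonalize $\gamma$ against $\omega$ with eigenvalues $\geq 1$, and the two explicit identities you write for $\langle A^{p,n}_{E,h,\omega}u,u\rangle_{h,\omega}dV_\omega$ and $\langle A^{p,n}_{E,h,\gamma}u,u\rangle_{h,\gamma}dV_\gamma$ are correct and coincide with what the paper extracts from Lemma \ref{calculate (p,q)-forms} (with its $\gamma_j^2$ playing the role of your $\gamma_j$). The norm comparison $|u|^2_{h,\gamma}dV_\gamma\leq|u|^2_{h,\omega}dV_\omega$ is complete and agrees with the paper's.

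The problem is that the step you defer --- ``conclude as in the known $(n,q)$-statement'' --- is the entire content of the lemma, and you do not supply the mechanism. What the paper actually does is: (i) introduce the comparison operator $S_\gamma u=\sum u_{J\lambda}\gamma_J^{-2}\gamma_N^{-2}\,dz_J\wedge d\overline{z}_N\otimes e_\lambda$, which satisfies $\langle u,v\rangle_\gamma=\langle u,S_\gamma v\rangle_\omega$; (ii) prove the pointwise inequality $\langle A^{p,n}_{E,h,\gamma}v,v\rangle_\gamma\geq\gamma_N^2\langle A^{p,n}_{E,h,\omega}S_\gamma v,S_\gamma v\rangle_\omega$ by grouping the double sum over $(L,J)$ according to $I=L\setminus j=J\setminus k$ and using $\gamma_I^2\geq1$; (iii) apply Cauchy--Schwarz with respect to $A^{p,n}_{E,h,\omega}$ to $\langle u,S_\gamma v\rangle_\omega$ and choose $v=(A^{p,n}_{E,h,\gamma})^{-1}u$. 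Your Legendre-duality formulation is equivalent to (iii), but it cannot close without (i) and (ii): to dominate the $\gamma$-supremum by the $\omega$-supremum you need exactly a substitution $v\mapsto w$ with $\langle u,v\rangle_\gamma dV_\gamma=\langle u,w\rangle_\omega dV_\omega$ and $\langle A_\gamma v,v\rangle_\gamma dV_\gamma\geq\langle A_\omega w,w\rangle_\omega dV_\omega$, i.e.\ the paper's $S_\gamma$ and its key inequality, neither of which you construct. Moreover, your justification of the positivity transfer (``the coefficients $(\prod_{l\in S}\gamma_l)^{-1}$ are positive and the substitution is injective'') is not a proof: the blocks indexed by $S$ overlap (each component $u_{K,\lambda}$ occurs in $p$ of them) and are not individually sign-definite under the sole hypothesis $A^{p,n}_{E,h,\omega}\geq0$, so positivity of a reweighted sum does not follow formally from positivity of the unweighted sum. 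Be aware that this is the genuinely delicate point: the reweighting inequality $\sum_S\gamma_S\,Q_S\geq\sum_S Q_S$ tacitly uses nonnegativity of each block $Q_S$, which amounts to a Nakano-type sign condition on the coefficients $c_{jk\lambda\mu}$; it holds in the paper's applications (where the curvature is of line-bundle type $\idd\varphi\otimes\mathrm{id}$ with $(\varphi_{jk})\geq0$), but it is not a ``routine transcription'' from the stated hypothesis alone.
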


To show Lemma \ref{uniformity of (p,n)-forms}, we use the following symbolic definition and lemma which is the calculation results.

\begin{definition}$\mathrm{(cf.\,[Wat22,\,Definition\,2.1])}$
    Let $(M,g)$ be an oriented Riemmannian $\mathcal{C}^\infty$-manifold with $\mathrm{dim}_{\mathbb{R}}M=m$ and $(\xi_1,\cdots,\xi_m)$ be an orthonormal basis of $(T_M,g)$ at $x_0\in M$.
    For any ordered multi-index $I$, we define $\varepsilon(s,I)\in \{-1,0,1\}$ as the number that satisfies $\xi_s\lrcorner~\xi_I^*=\varepsilon(s,I)\xi^*_{I\setminus s}$, 
    where if $s\notin I$ then $\varepsilon(s,I)=0$ and if $s\in I$ then $\varepsilon(s,I)\in\{-1,1\}$. Here, the symbol $\bullet\lrcorner~\bullet$ represents the interior product, i.e. $\xi_s\lrcorner~\xi_I^*=\iota_{\xi_s}\xi_I^*$.

    Let $(X,\omega)$ be a Hermitian manifold of $\dim_\mathbb{C}X=n$. If $(\partial/\partial z_1,\cdots,\partial/\partial z_n)$ is an orthonormal basis of $(T_X,\omega)$ at $x_0$ then we define $\varepsilon(s,I)$ in the same way as follows 
    $\displaystyle \frac{\partial}{\partial z_s}\lrcorner~ dz_I=\varepsilon(s,I)dz_{I\setminus s}$. In particular, we have that $\displaystyle \frac{\partial}{\partial \overline{z}_s}\lrcorner~ d\overline{z}_I=\varepsilon(s,I)d\overline{z}_{I\setminus s}$.
\end{definition}

\begin{lemma}\label{calculate (p,q)-forms}$\mathrm{(cf.\,[Wat22,\,Proposition\,2.2])}$
    Let $(X,\omega)$ be a Hermitian manifold and $(E,h)$ be a holomorphic Hermitian vector bundle over $X$.
    Let $x_0\in X$ and $(z_1,\ldots,z_n)$ be local coordinates such that $(\partial/\partial z_1,\ldots,\partial/\partial z_n)$ is an orthonormal basis of $(T_X,\omega)$ at $x_0$. Let $(e_1,\ldots,e_r)$ be an orthonormal basis of $E_{x_0}$. We can write
\begin{align*}
    \omega_{x_0}=i\sum_{1\leq j\leq n}dz_j\wedge d\overline{z}_j,\quad
    i\Theta_{E,h,x_0}=i\sum_{j,k,\lambda,\mu}c_{jk\lambda\mu}dz_j\wedge d\overline{z}_k\otimes e^*_\lambda\otimes e_\mu.
\end{align*}
Let $J,K,L$ and $M$ be ordered multi-indices with $|J|=|L|=p$ and $|K|=|M|=q$.
For any $(p,q)$-form $ u=\sum_{|J|=p,|K|=q,\lambda}u_{J,K,\lambda}dz_J\wedge d\overline{z}_K\otimes e_\lambda\in\Lambda^{p,q}T^*_{X,x_0}\otimes E_{x_0}$, we have the following calculation results:
\begin{align*}
    \langle[i\Theta_{E,h},\Lambda_\omega]u,u\rangle_\omega&=\Bigl(\sum_{j\in J}+\sum_{j\in K}-\sum_{1\leq j\leq n}\Bigr)c_{jj\lambda\mu}u_{J,K,\lambda}\overline{u}_{J,K,\mu}\\
    &+\sum_{j\ne k,K\setminus j=M\setminus k}c_{jk\lambda\mu}u_{J,K,\lambda}\overline{u}_{J,M,\mu}\varepsilon(j,K)\varepsilon(k,M)\\
    &+\sum_{j\ne k,L\setminus j=J\setminus k}c_{jk\lambda\mu}u_{L,K,\lambda}\overline{u}_{J,K,\mu}\varepsilon(k,J)\varepsilon(j,L).
\end{align*}
\end{lemma}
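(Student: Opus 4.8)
The plan is to reduce the identity to a pointwise statement at $x_0$ and then to a bookkeeping computation with interior products. Since both sides are functions at $x_0$ built from the fixed frame, I work entirely in the given coordinates and unitary frame, in which $\omega_{x_0}=i\sum_{j}dz_j\wedge d\overline z_j$ and the monomials $dz_J\wedge d\overline z_K\otimes e_\nu$ are orthonormal. I first record that $\Lambda_\omega$ is the pointwise adjoint of the Lefschetz operator $L_\omega\colon\alpha\mapsto\omega\wedge\alpha$, and that in such coordinates it is given by the standard local expression $\Lambda_\omega=-i\sum_{s}\iota_{\partial/\partial z_s}\iota_{\partial/\partial\overline z_s}$ on the form part (see e.g. [Dem-book, Ch. VI]); I also note that $i\Theta_{E,h}\wedge\cdot$ has even total degree, so $[i\Theta_{E,h},\Lambda_\omega]=i\Theta_{E,h}\wedge\Lambda_\omega(\cdot)-\Lambda_\omega\bigl(i\Theta_{E,h}\wedge\cdot\bigr)$ is the ordinary commutator. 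Because the curvature operator is additive in the coefficients $c_{jk\lambda\mu}$ and $i\Theta_{E,h}$ is Hermitian as an $\mathrm{End}(E)$-valued $(1,1)$-form (so $\overline{c_{jk\lambda\mu}}=c_{kj\mu\lambda}$), it suffices to analyse the action of a single elementary term $\tau_{jk\lambda\mu}:=i\,dz_j\wedge d\overline z_k\otimes e_\lambda^*\otimes e_\mu$ and then sum $c_{jk\lambda\mu}\langle[\tau_{jk\lambda\mu},\Lambda_\omega]u,u\rangle_\omega$ over $j,k,\lambda,\mu$.

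Next I evaluate $\tau_{jk\lambda\mu}\wedge(\Lambda_\omega u)$ and $\Lambda_\omega\bigl(\tau_{jk\lambda\mu}\wedge u\bigr)$ on a monomial $dz_J\wedge d\overline z_K\otimes e_\lambda$: the factor $e_\lambda^*$ selects the $e_\lambda$-component of $u$ and sends it to $e_\mu$, while on the form part one repeatedly applies $\iota_{\partial/\partial z_s}(dz_I)=\varepsilon(s,I)\,dz_{I\setminus s}$, $\iota_{\partial/\partial\overline z_s}(d\overline z_I)=\varepsilon(s,I)\,d\overline z_{I\setminus s}$ and the dual wedging relations, tracking the signs produced when an interior product is moved past a one-form. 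I then organise the result according to whether $j=k$ or $j\ne k$. In the diagonal case $j=k$, summing $\sum_{s}\iota_{\partial/\partial z_s}\iota_{\partial/\partial\overline z_s}$ over $s$ makes the $\varepsilon$-factors collapse into a multiplicity count, producing the diagonal contribution $\bigl(\sum_{j\in J}+\sum_{j\in K}-\sum_{1\le j\le n}\bigr)c_{jj\lambda\mu}u_{J,K,\lambda}\overline u_{J,K,\mu}$. In the off-diagonal case $j\ne k$, exactly one of two things can happen: either the $d\overline z$-slot is altered while $dz_J$ is left untouched, which forces the index constraint $K\setminus j=M\setminus k$ and contributes the sign $\varepsilon(j,K)\varepsilon(k,M)$; or the $dz$-slot is altered while $d\overline z_K$ is untouched, which forces $L\setminus j=J\setminus k$ and contributes $\varepsilon(k,J)\varepsilon(j,L)$. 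Pairing with $u$ via orthonormality of the monomials and of the $e_\mu$, and summing over the elementary terms, yields precisely the three displayed lines.

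The main obstacle is entirely the sign and combinatorial bookkeeping: one must correctly balance the $-i$ in $\Lambda_\omega$ against the $i$ in $\tau_{jk\lambda\mu}$, account for the $(-1)^{\deg}$'s incurred while commuting interior products through $dz_j$ and $d\overline z_k$, and verify that the $\varepsilon$-factors arising from the successive contractions and wedgings amalgamate into exactly $\varepsilon(j,K)\varepsilon(k,M)$ and $\varepsilon(k,J)\varepsilon(j,L)$ with no leftover constant. A reliable consistency check that pins down all normalisations is the classical case $r=1$, $p=n$, $|K|=|M|=1$, where the formula must collapse to $\langle[i\Theta_{L,h},\Lambda_\omega]u,u\rangle_\omega=\sum_{j,k}c_{jk}u_j\overline u_k$: indeed the third line disappears because $J$ is the full index set, the first line gives $\sum_{k}c_{kk}|u_k|^2$, and the second gives $\sum_{j\ne k}c_{jk}u_j\overline u_k$. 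For the special case of $(n,q)$-forms this computation is carried out in [Dem-book, Ch. VII], and the general $(p,q)$-case is \cite{Wat21}, so the statement may alternatively be quoted directly from there.
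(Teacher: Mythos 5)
The paper itself does not prove this lemma: it is stated with the citation [Wat21, Proposition 2.2] and used as a black box, so your closing remark that the statement ``may alternatively be quoted directly from there'' is exactly what the paper does. Your sketched direct argument is the standard route (the one carried out in \cite{Wat21}, and in [Dem-book, Chapter VII] for the $(n,q)$-case): write $\Lambda_\omega$ in the orthonormal frame as a sum of double contractions, split the curvature into elementary terms $c_{jk\lambda\mu}\,i\,dz_j\wedge d\overline z_k\otimes e_\lambda^*\otimes e_\mu$, and observe that in the commutator only the contractions with $s\in\{j,k\}$ survive, giving the diagonal line for $j=k$ and, for $j\ne k$, precisely the two off-diagonal families with the constraints $K\setminus j=M\setminus k$ and $L\setminus j=J\setminus k$ and the signs $\varepsilon(j,K)\varepsilon(k,M)$, $\varepsilon(k,J)\varepsilon(j,L)$; your consistency check in the case $r=1$, $p=n$, $q=1$ is a correct and useful calibration of the normalizations. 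The only caveat is that, as written, the decisive step (the cancellation of the $s\notin\{j,k\}$ terms in $\Theta\wedge\Lambda(\cdot)-\Lambda(\Theta\wedge\cdot)$ and the amalgamation of the $\varepsilon$-signs) is described rather than executed, so as a self-contained proof it is a sketch; but nothing in the plan would fail, and since the paper's own treatment is a citation to \cite{Wat21}, your proposal is fully consistent with it.
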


$\textit{Proof of Lemma \ref{uniformity of (p,n)-forms}}$.
    For any $x_0\in X$, after a linearly transformation, we may assume $\omega=i\sum^n_{j=1}dz_j\wedge d\overline{z}_j$ and $\gamma=i\sum^n_{j=1}\gamma^2_jdz_j\wedge d\overline{z}_j$ at $x_0$ with $\gamma_j\geq1$.
    Let $w_j=\gamma_jz_j$ for $j=1,2,\cdots,n$ and $(e_1,\ldots,e_r)$ be an orthonormal basis of $E_{x_0}$. Then we can write
    \begin{align*}
        \gamma&=i\sum_{1\leq j\leq n}dw_j\wedge d\overline{w}_j,\\
        i\Theta_{E,h}&=i\sum_{j,k,\lambda,\mu}c_{jk\lambda\mu}dz_j\wedge d\overline{z}_k\otimes e^*_\lambda\otimes e_\mu=i\sum_{j,k,\lambda,\mu}c'_{jk\lambda\mu}dw_j\wedge d\overline{w}_k\otimes e^*_\lambda\otimes e_\mu
    \end{align*}
    with $c'_{jk\lambda\mu}=c_{jk\lambda\mu}/\gamma_j\gamma_k$. For any ordered multi-indices $J$ we denote $\gamma_J=\Pi_{j\in J}\gamma_j$. For any $u\in\Lambda^{p,n}T^*_{X,x_0}\otimes E_{x_0}$ we can write
    \begin{align*}
        u=\sum u_{J\lambda}dz_J\wedge d\overline{z}_N\otimes e_\lambda=\sum u'_{J\lambda}dw_J\wedge d\overline{w}_N\otimes e_\lambda
    \end{align*}
    with $u'_{J\lambda}=u_{J\lambda}/\gamma_J\gamma_N$ where $N=\{1,\cdots,n\}$.

    Then we obtain that 
    \begin{align*}
        &|u|^2_{h,\gamma}=\sum|u'_{J\lambda}|^2=\sum\gamma^{-2}_J\gamma^{-2}_N|u_{J\lambda}|^2, \qquad dV_\gamma=\gamma_N^2dV_\omega,\\
        &|u|^2_{h,\gamma}dV_\gamma=\sum\gamma^{-2}_J|u_{J\lambda}|^2dV_\omega\leq|u|^2_{h,\omega}dV_\omega.
    \end{align*}

    From Lemma \ref{calculate (p,q)-forms}, we have that 
    \begin{align*}
        \langle A^{p,n}_{E,h,\gamma}u,u\rangle_\gamma&=\sum_{j\in J}c'_{jj\lambda\mu}u'_{J\lambda}\overline{u}'_{J\mu}+\sum_{j\ne k,L\setminus j=J\setminus k}c'_{jk\lambda\mu}u'_{L\lambda}\overline{u}'_{J\mu}\varepsilon(k,J)\varepsilon(j,L)\\
        &=\sum_{L\setminus j=J\setminus k}c'_{jk\lambda\mu}u'_{L\lambda}\overline{u}'_{J\mu}\varepsilon(k,J)\varepsilon(j,L)\\
        &=\gamma_N^{-2}\sum_{L\setminus j=J\setminus k}c_{jk\lambda\mu}u_{L\lambda}\overline{u}_{J\mu}\varepsilon(k,J)\varepsilon(j,L)/(\gamma_j\gamma_k\gamma_L\gamma_J)\\
        &=\gamma_N^{-2}\sum_I\sum_{L\setminus j=J\setminus k}c_{jk\lambda\mu}u_{L\lambda}\overline{u}_{J\mu}\varepsilon(k,J)\varepsilon(j,L)/(\gamma_j\gamma_k\gamma_I)^2 \quad (I\!:=\!L\!\setminus\! j\!=\!J\!\setminus\! k)\\
        &=\gamma_N^{-2}\sum_I\gamma_I^2\sum_{L\setminus j=J\setminus k}c_{jk\lambda\mu}u_{L\lambda}\overline{u}_{J\mu}\varepsilon(k,J)\varepsilon(j,L)/(\gamma_j\gamma_k\gamma_I^2)^2\\
        &=\gamma_N^{-2}\sum_I\gamma_I^2\sum_{L\setminus j=J\setminus k}c_{jk\lambda\mu}u_{L\lambda}\overline{u}_{J\mu}\varepsilon(k,J)\varepsilon(j,L)/(\gamma_L^2\gamma_J^2)\\
        &\geq \gamma_N^{-2}\sum_{L\setminus j=J\setminus k}c_{jk\lambda\mu}u_{L\lambda}\overline{u}_{J\mu}\varepsilon(k,J)\varepsilon(j,L)/(\gamma_L^2\gamma_J^2)\\
        &=\gamma_N^2\sum_{L\setminus j=J\setminus k}c_{jk\lambda\mu}u_{L\lambda}\overline{u}_{J\mu}\varepsilon(k,J)\varepsilon(j,L)/(\gamma_L^2\gamma_J^2\gamma_N^4)\\
        &=\gamma_N^2\langle A^{p,n}_{E,h,\omega}S_\gamma u,S_\gamma u\rangle_\omega
    \end{align*}
    where $S_\gamma$ is the operator defined by
    \begin{align*}
        S_\gamma u=\sum u_{J\lambda}\gamma_J^{-2}\gamma_N^{-2}dz_J\wedge d\overline{z}_N\otimes e_\lambda\in\Lambda^{p,n}T^*_{X,x_0}\otimes E_{x_0}.
    \end{align*}
    Therefore we obtain that $\, A^{p,n}_{E,h,\omega}>0~\, \Longrightarrow ~\,A^{p,n}_{E,h,\gamma}>0$.

    Hence for any $u,v\in\Lambda^{p,n}T^*_{X,x_0}\otimes E_{x_0}$ we have that
    \begin{align*}
        |\langle u,v\rangle_\gamma|^2=|\langle u,S_\gamma v\rangle_\omega|^2&\leq\langle(A^{p,n}_{E,h,\omega})^{-1} u,u\rangle_\omega\langle A^{p,n}_{E,h,\omega} S_\gamma v,S_\gamma v\rangle_\omega\\
        &\leq \gamma_N^{-2}\langle(A^{p,n}_{E,h,\omega})^{-1} u,u\rangle_\omega\langle A^{p,n}_{E,h,\gamma} v,v\rangle_\gamma,
    \end{align*}
    and the choice $v=(A^{p,n}_{E,h,\gamma})^{-1}u$ implies
    \begin{align*}
        \langle(A^{p,n}_{E,h,\gamma})^{-1}u,u\rangle_{h,\gamma}\gamma_N^2 \leq \langle(A^{p,n}_{E,h,\omega})^{-1}u,u\rangle_{h,\omega}.
    \end{align*}
    From the above and $dV_\gamma=\gamma_N^2dV_\omega$, this proof is completed. \qed\\

\begin{lemma}\label{uniformity of (p,q)-forms not dV}
    Let $X$ be a complex manifold and $(E,h)$ be a holomorphic Hermitian vector bundle over $X$.
    Let $\omega, \gamma$ be Hermitian metrics on $X$ such that $\gamma\geq\omega$. For any $u\in\Lambda^{p,q}T^*_X\otimes E$, we have that $|u|^2_{h,\gamma}\leq|u|^2_{h,\omega}$.
\end{lemma}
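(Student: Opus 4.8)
The statement is purely pointwise, so it suffices to establish the inequality at an arbitrary point $x_0\in X$, and the plan is to diagonalize $\omega$ and $\gamma$ simultaneously at $x_0$ and simply read off the two norms in an adapted frame, exactly as in the first part of the proof of Lemma \ref{uniformity of (p,n)-forms}. Concretely, I would choose local coordinates $(z_1,\ldots,z_n)$ centered at $x_0$ and, after a linear change of coordinates, arrange that $\omega_{x_0}=i\sum_{j=1}^n dz_j\wedge d\overline{z}_j$ and $\gamma_{x_0}=i\sum_{j=1}^n\gamma_j^2\,dz_j\wedge d\overline{z}_j$ with $\gamma_j>0$; the hypothesis $\gamma\geq\omega$ then forces $\gamma_j\geq1$ for every $j$. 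Setting $w_j=\gamma_j z_j$ gives $\gamma_{x_0}=i\sum_j dw_j\wedge d\overline{w}_j$, and I would fix an $h$-orthonormal basis $(e_1,\ldots,e_r)$ of $E_{x_0}$.

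Next, writing $u=\sum_{|J|=p,|K|=q,\lambda}u_{J,K,\lambda}\,dz_J\wedge d\overline{z}_K\otimes e_\lambda=\sum u'_{J,K,\lambda}\,dw_J\wedge d\overline{w}_K\otimes e_\lambda$, one has $u'_{J,K,\lambda}=u_{J,K,\lambda}/(\gamma_J\gamma_K)$, where $\gamma_J:=\prod_{j\in J}\gamma_j$. Since $(dz_J\wedge d\overline{z}_K\otimes e_\lambda)$ is $(h,\omega)$-orthonormal at $x_0$ and $(dw_J\wedge d\overline{w}_K\otimes e_\lambda)$ is $(h,\gamma)$-orthonormal at $x_0$, this yields $|u|^2_{h,\omega}=\sum|u_{J,K,\lambda}|^2$ and $|u|^2_{h,\gamma}=\sum|u'_{J,K,\lambda}|^2=\sum\gamma_J^{-2}\gamma_K^{-2}|u_{J,K,\lambda}|^2$. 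As $\gamma_j\geq1$ implies $\gamma_J\geq1$ and $\gamma_K\geq1$, every summand of the latter expression is bounded by $|u_{J,K,\lambda}|^2$, so $|u|^2_{h,\gamma}\leq|u|^2_{h,\omega}$ at $x_0$; since $x_0$ was arbitrary this proves the lemma.

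There is essentially no genuine obstacle: the computation is a strict sub-case of the norm comparison already carried out in the proof of Lemma \ref{uniformity of (p,n)-forms}. The only point worth flagging is the difference from that lemma: here the anti-holomorphic multi-index $K$ need not be $N=\{1,\ldots,n\}$, so it contributes its own shrinking factor $\gamma_K^{-2}$ rather than being absorbed into the volume comparison $dV_\gamma=\gamma_N^2\,dV_\omega$; and the inequality now holds for all $(p,q)$ (no hypothesis $p\geq1$ is needed) precisely because no volume form intervenes.
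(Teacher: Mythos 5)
Your proposal is correct and is essentially identical to the paper's own proof: both diagonalize $\omega$ and $\gamma$ simultaneously at a point via the rescaled coordinates $w_j=\gamma_jz_j$, express $u$ in both orthonormal frames with $u'_{JK\lambda}=u_{JK\lambda}/(\gamma_J\gamma_K)$, and conclude from $\gamma_j\geq1$. Your closing remark correctly identifies why no volume-form factor or restriction on $(p,q)$ enters here, in contrast to Lemma \ref{uniformity of (p,n)-forms}.
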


\begin{proof}
    Let notation be the same as one in the proof of Lemma \ref{uniformity of (p,n)-forms}.
    Then for any $u\in\Lambda^{p,q}T^*_{X,x_0}\otimes E_{x_0}$, we can write
    \begin{align*}
        u=\sum_{J,K,\lambda} u_{JK\lambda}dz_J\wedge d\overline{z}_K\otimes e_\lambda=\sum_{J,K,\lambda} u'_{JK\lambda}dw_J\wedge d\overline{w}_K\otimes e_\lambda
    \end{align*}
    with $u'_{JK\lambda}=u_{JK\lambda}/\gamma_J\gamma_K$. Hence, we have that 
    \begin{align*}
        |u|^2_{h,\gamma}=\sum|u'_{JK\lambda}|^2=\sum|u_{JK\lambda}|^2/(\gamma_J\gamma_K)^2\leq\sum|u_{JK\lambda}|^2=|u|^2_{h,\omega}.
    \end{align*}
    From the above, this proof is completed.
\end{proof}

Using Lemma \ref{uniformity of (p,n)-forms} and \ref{uniformity of (p,q)-forms not dV}, we obtain the following proposition.

\begin{proposition}\label{L2-estimate for (p,n)-forms on Stein}
    Let $S$ be a Stein manifold of dimension $n$ and $\omega$ be a \kah metric on $S$. Let $\varphi$ be a strictly plurisubharmonic function on $S$.
    We assume that 
    \begin{align*}
        \idd\varphi\geq\varepsilon\omega
    \end{align*}
    in the sense of currents for some $\varepsilon>0$. Then for any $f\in L^2_{p,n}(S,\varphi,\omega)$ satisfying $\overline{\partial}f=0$, there exists $u\in L^2_{p,n-1}(S,\varphi,\omega)$ such that $\overline{\partial}u=f$ and 
    \begin{align*}
        \int_S|u|^2_\omega e^{-\varphi}dV_\omega\leq\frac{1}{p\varepsilon}\int_S|f|_\omega^2e^{-\varphi}dV_\omega.
    \end{align*}
\end{proposition}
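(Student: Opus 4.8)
The plan is to apply the Bochner--Kodaira--Nakano and H\"ormander $L^2$-machinery to the trivial line bundle $\mathcal{O}_S$ equipped with the singular Hermitian metric $h=e^{-\varphi}$, for which $i\Theta_{\mathcal{O}_S,h}=\idd\varphi\ge\varepsilon\omega$; this is precisely the $(p,n)$-form analogue of the classical Demailly--H\"ormander $L^2$-estimate for $(n,q)$-forms on weakly pseudoconvex \kah manifolds (cf.\ \cite{Dem93}, [Dem-book, Chapter VIII]), and I would follow the same template. The first reductions are routine on a Stein manifold: by a standard regularization one obtains smooth strictly plurisubharmonic $\varphi_\nu\searrow\varphi$ with $\idd\varphi_\nu\ge\varepsilon_\nu\omega$ and $\varepsilon_\nu\nearrow\varepsilon$, the small loss being recovered in the final limit; and since $\omega$ need not be complete, one fixes a complete \kah metric $\hat\omega$ on $S$ (built from a smooth strictly plurisubharmonic exhaustion) and works on $(S,\omega_\delta)$ with $\omega_\delta:=\omega+\delta\hat\omega\ge\omega$, letting $\delta\to 0$ at the end. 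Write $h_\nu:=e^{-\varphi_\nu}$.

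The core is the curvature lower bound for $(p,n)$-forms. Apply Lemma \ref{calculate (p,q)-forms} to $\mathcal{O}_S$ with $q=n$: the antiholomorphic multi-index is then forced to be $\{1,\dots,n\}$, so the combination $\sum_{j\in K}-\sum_{1\le j\le n}$ cancels and the ``middle'' off-diagonal sum is empty; diagonalizing the curvature at a point (relative to coordinates in which $\omega$ is the standard form) kills the last off-diagonal sum as well, and since $\mathcal{O}_S$ has rank one only the diagonal coefficients $c_{jj}\ge\varepsilon_\nu$ remain:
\begin{align*}
\langle[i\Theta_{\mathcal{O}_S,h_\nu},\Lambda_\omega]u,u\rangle_\omega=\sum_{J}\Bigl(\sum_{j\in J}c_{jj}\Bigr)|u_{J}|^2\ge p\varepsilon_\nu|u|^2_\omega,
\end{align*}
because $|J|=p\ge1$; hence $A^{p,n}_{\mathcal{O}_S,h_\nu,\omega}\ge p\varepsilon_\nu\,\mathrm{Id}>0$. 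Lemma \ref{uniformity of (p,n)-forms} then propagates positivity to $\omega_\delta$ and, for $\overline{\partial}$-closed $f\in L^2_{p,n}(S,\varphi,\omega)$, gives $A^{p,n}_{\mathcal{O}_S,h_\nu,\omega_\delta}>0$ together with
\begin{align*}
\langle(A^{p,n}_{\mathcal{O}_S,h_\nu,\omega_\delta})^{-1}f,f\rangle_{h_\nu,\omega_\delta}\,dV_{\omega_\delta}\le\langle(A^{p,n}_{\mathcal{O}_S,h_\nu,\omega})^{-1}f,f\rangle_{h_\nu,\omega}\,dV_\omega\le\frac{1}{p\varepsilon_\nu}|f|^2_{h_\nu,\omega}\,dV_\omega,
\end{align*}
while the same lemma also yields $|f|^2_{h_\nu,\omega_\delta}dV_{\omega_\delta}\le|f|^2_{h_\nu,\omega}dV_\omega$. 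Consequently the right-hand side of the $L^2$-existence theorem on the complete \kah manifold $(S,\omega_\delta)$ is bounded by $\frac{1}{p\varepsilon_\nu}\int_S|f|^2_\omega e^{-\varphi}dV_\omega<\infty$ (using $e^{-\varphi_\nu}\le e^{-\varphi}$), and applying that theorem produces $u_{\delta,\nu}\in L^2_{p,n-1}$ with $\overline{\partial}u_{\delta,\nu}=f$ and $\int_S|u_{\delta,\nu}|^2_{\omega_\delta}e^{-\varphi_\nu}dV_{\omega_\delta}\le\frac{1}{p\varepsilon_\nu}\int_S|f|^2_\omega e^{-\varphi}dV_\omega$.

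Passing to the limit while preserving the sharp constant is the main obstacle. In the classical $(n,q)$-situation the inequality $|v|^2_\gamma dV_\gamma\le|v|^2_\omega dV_\omega$ holds for \emph{every} $(n,s)$-form and supplies a clean monotonicity in $\delta$; here the solution $u_{\delta,\nu}$ is a $(p,n-1)$-form with $p<n$ in general, for which no such pointwise comparison is available, so I would argue locally instead. On each relatively compact $U\Subset S$ the metrics $\omega_\delta$ ($0<\delta\le1$) are uniformly equivalent to $\omega$, with equivalence constants tending to $1$ as $\delta\to0$ (the precise comparisons being those quantified by Lemmas \ref{uniformity of (p,n)-forms} and \ref{uniformity of (p,q)-forms not dV}); hence $(u_{\delta,\nu})_\delta$ is bounded in $L^2_{p,n-1}(U)$, a diagonal extraction gives $u_{\delta,\nu}\rightharpoonup u_\nu$ with $\overline{\partial}u_\nu=f$, and lower semicontinuity of the $L^2$-norm over an exhaustion of $S$ by compacts --- combined with the convergence of those ratio constants to $1$ --- yields $\int_S|u_\nu|^2_\omega e^{-\varphi_\nu}dV_\omega\le\frac{1}{p\varepsilon_\nu}\int_S|f|^2_\omega e^{-\varphi}dV_\omega$ with the exact constant $1/(p\varepsilon_\nu)$, not merely $1/(p\varepsilon_\nu)+o(1)$. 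Finally, since $\varphi_\nu\searrow\varphi$ (so $e^{-\varphi_\nu}\nearrow e^{-\varphi}$) and $\varepsilon_\nu\nearrow\varepsilon$, the same weak-compactness and monotone-convergence argument run in $\nu$ delivers $u\in L^2_{p,n-1}(S,\varphi,\omega)$ with $\overline{\partial}u=f$ and $\int_S|u|^2_\omega e^{-\varphi}dV_\omega\le\frac{1}{p\varepsilon}\int_S|f|^2_\omega e^{-\varphi}dV_\omega$, as required.
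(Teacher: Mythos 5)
Your proposal follows the same architecture as the paper's proof: regularize the weight to smooth strictly plurisubharmonic functions $\varphi_\nu$ with $\idd\varphi_\nu\geq\varepsilon_\nu\omega$, complete the metric as $\omega_\delta=\omega+\delta\widehat{\omega}$, derive the pointwise bound $\langle[\idd\varphi_\nu,\Lambda_\omega]v,v\rangle_\omega\geq p\varepsilon_\nu|v|^2$ for $(p,n)$-forms, transfer it to $\omega_\delta$ via Lemma \ref{uniformity of (p,n)-forms}, apply H\"ormander's estimate, and remove $\delta$ and $\nu$ by weak compactness and monotone convergence. Two remarks. First, your $\delta\to0$ bookkeeping (local uniform equivalence $\omega\leq\omega_\delta\leq(1+\delta C_K)\omega$ on compact sets, weak lower semicontinuity over an exhaustion) is sound and is an acceptable substitute for the paper's device, which instead uses the monotonicity $|u|^2_{\omega_{1/\lambda_1}}\leq|u|^2_{\omega_{1/\lambda}}$ for $\lambda\geq\lambda_1$ (Lemma \ref{uniformity of (p,q)-forms not dV}) to get boundedness in one fixed Hilbert space and then the monotone convergence theorem to recover the exact constant. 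Second --- and this is the one step you should not call routine --- a \emph{global} decreasing sequence of smooth strictly plurisubharmonic $\varphi_\nu$ on all of $S$ with $\idd\varphi_\nu\geq\varepsilon_\nu\omega$ everywhere is not produced by convolution: embedding $S\subset\mathbb{C}^N$ and convolving via a Docquier--Grauert retraction only yields $\varphi_\nu$ on the shrinking subsets $S^\nu=\{z\in S\mid d_N(z,S^c)>1/\nu\}$, and the curvature lower bound is obtained only locally through local K\"ahler potentials of $\omega$ (which need not exist globally on a Stein manifold). This is precisely why the paper restricts to Stein sublevel sets $S_j\subset\subset S^\nu$, solves the equation there, and runs a third weak limit $j\to+\infty$ at the end. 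Your argument is repaired by inserting that third limit, which your own weak-compactness machinery handles verbatim; as written, the global regularization claim is an unproved assertion rather than a standard fact.
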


\begin{proof}
    We may assume that $S$ is a submanifold of $\mathbb{C}^{N}$.  
    By the theorem of Docquier and Grauert, there exists an open neighborhood $W \subset \mathbb{C}^{N}$ of $S$ and a holomorphic retraction $\mu: W \to S$ (cf.\,ChapterV\,of\,\cite{Hor90}).
    Let $\rho:\mathbb{C}^N\rightarrow \mathbb{R}_{\geq0}$ be a smooth function depending only on $|z|$ such that $\mathrm{supp}\,\rho\subset \mathbb{B}^N$ and that $\int_{\mathbb{C}^N} \rho(z)dV=1$, where $\mathbb{B}^N$ is the unit ball. 
    Define $\rho_\varepsilon(z)=(1/\varepsilon^{2n})h(z/\varepsilon)$ for $\varepsilon>0$. Let $S^\nu:=\{z\in S\mid d_N(z,S^c)>1/\nu\}$ be a subset of $S\subset \mathbb{C}^N$.
    For any plurisubharmonic function $\alpha$ on $S$ we define the function $\alpha_\nu:=\alpha\ast\rho_{1/\nu}$. Then $\alpha_\nu$ is a smooth plurisubharmonic function on $S^\nu$.
    
    Let $U$ be a open subset and $\Omega$ be a local \kah potential of $\omega$ on $U$, i.e. $\Omega$ satisfies $\idd\Omega=\omega$. 
    By the assumption, we get $\idd(\varphi-\varepsilon\Omega)=i\Theta_{L,h}-\varepsilon\omega\geq0$ in the sense of currents.
    Then the function $(\varphi-\varepsilon\Omega)_{\nu}=\varphi_\nu-\varepsilon\Omega_\nu$ is a smooth plurisubharmonic function defined on $U^\nu$.
    Since $\Omega_\nu$ is strictly plurisubharmonic, $\varphi_\nu$ also is a smooth strictly plurisubharmonic function on $S^\nu$ and satisfies the following condition
    \begin{align*}
        \idd\varphi_\nu\geq\varepsilon\idd\Omega_\nu\geq\varepsilon_\nu\omega, 
    \end{align*}
    where $(\varepsilon_\nu)_{\nu\in\mathbb{N}}$ is a positive number sequence such that $0<\varepsilon/2<\varepsilon_\nu\nearrow\varepsilon,\,\,(\nu\to+\infty)$.
    Let $\varphi_\infty:=\lim_{\nu\to+\infty}\varphi_\nu$ then $\varphi_\infty$ is a plurisubharmonic function on $S$ such that $\varphi_\infty=\varphi~\,a.e.$ and a smooth functions sequence $(\varphi_\nu)_{\nu\in\mathbb{N}}$ is decreasing to $\varphi_\infty$.

    Since Stein-ness of $S$, there exists a smooth exhaustive plurisubharmonic function $\psi$ on $S$. We can assume that $\sup_S\psi=+\infty$.
    For any number $c<\sup_S\psi=+\infty$, we define the sublevel sets $S_c:=\{z\in S\mid \psi(z)<c\}$ which is Stein. Fixed $j\in\mathbb{N}$. There exists $\nu_0\in\mathbb{N}$ such that for any integer $\nu\geq\nu_0$, $S_j\subset\subset S^{\nu_0} \subset\subset S^\nu$. 
    From Stein-ness of $S_j$, there exists a complete \kah metric $\widehat{\omega}_j$ on $S_j$. Then we define the complete \kah metric $\omega_\delta:=\omega+\delta\widehat{\omega}_j>\omega$ on $S_j$ for $\delta>0$.

    For any $\nu\geq\nu_0$ and any $v\in \Lambda^{p,n}T^*_{S_j}$, we obtain
    \begin{align*}
        \langle[\idd\varphi_\nu,\Lambda_\omega]v,v\rangle_\omega\geq\langle[\varepsilon_\nu\omega,\Lambda_\omega]v,v\rangle_\omega=p\varepsilon_\nu|v|^2 ~\,\,\mathrm{and}~\,\,A^{p,n}_{e^{-\varphi_\nu},\omega}=[\idd\varphi_\nu,\Lambda_\omega]>0.
    \end{align*}
    From this and Lemma \ref{uniformity of (p,n)-forms}, we have that $A^{p,n}_{e^{-\varphi_\nu},\omega_\delta}=[\idd\varphi_\nu,\Lambda_{\omega_\delta}]>0$ and
    \begin{align*}
        \int_{S_j}\langle[\idd\varphi_\nu,\Lambda_{\omega_\delta}]^{-1}f,f\rangle_{\omega_\delta} e^{-\varphi_\nu}dV_{\omega_\delta}&\leq \int_{S_j}\langle[\idd\varphi_\nu,\Lambda_\omega]^{-1}f,f\rangle_\omega e^{-\varphi_\nu}dV_\omega\\
        &\leq\frac{1}{p\varepsilon_\nu}\int_{S_j}|f|_\omega^2e^{-\varphi_\nu}dV_\omega\\
        &\leq\frac{1}{p\varepsilon_\nu}\int_{S_j}|f|_\omega^2e^{-\varphi}dV_\omega\\
        &\leq\frac{2}{p\varepsilon}\int_{S}|f|_\omega^2e^{-\varphi}dV_\omega<+\infty.
    \end{align*}

    For any two Hermitian metrics $\gamma_1,\gamma_2$ and any locally integrable function $\Phi\in\mathcal{L}_{loc}$, we define the Hilbert space $L^2_{p,q}(S,\Phi,\gamma_1,\gamma_2)$ of $(p,q)$-forms $g$ on $S$ with measurable coefficients such that 
    \begin{align*}
        \int_S|g|^2_{\gamma_1}e^{-\Phi}dV_{\gamma_2}<+\infty.
    \end{align*}
    Here there exists a positive smooth function $\tilde{\gamma}\in\mathcal{E}(S,\mathbb{R}_{>0})$ such that $dV_{\gamma_2}=\tilde{\gamma}dV_{\gamma_1}$ then we have that $L^2_{p,q}(S,\Phi,\gamma_1,\gamma_2)=L^2_{p,q}(S,\Phi-\log\tilde{\gamma},\gamma_1)$.

    Thanks to H\"ormander's $L^2$-estimate for smooth Hermitian metric with weight $\varphi_\nu$ and complete \kah metric $\omega_\delta$,
    we get a solution $u_{j,\nu,\delta}\in L^2_{p,n-1}(S_j,\varphi_\nu,\omega_\delta)\subset L^2_{p,n-1}(S_j,\varphi_\nu,\omega_\delta,\omega)$ of $\overline{\partial}u_{j,\nu,\delta}=f$ on $S_j$ such that
    \begin{align*}
        \int_{S_j}|u_{j,\nu,\delta}|^2_{\omega_\delta}e^{-\varphi_\nu}dV_\omega\leq\int_{S_j}|u_{j,\nu,\delta}|^2_{\omega_\delta}e^{-\varphi_\nu}dV_{\omega_\delta}&\leq\int_{S_j}\langle[\idd\varphi_\nu,\Lambda_{\omega_\delta}]^{-1}f,f\rangle_{\omega_\delta} e^{-\varphi_\nu}dV_{\omega_\delta}\\
        &\leq\int_{S_j}\langle[\idd\varphi_\nu,\Lambda_\omega]^{-1}f,f\rangle_\omega e^{-\varphi_\nu}dV_\omega\\
        &\leq\frac{2}{p\varepsilon}\int_{S}|f|_\omega^2e^{-\varphi}dV_\omega<+\infty.
    \end{align*}

    For fixed integer $\lambda_1\geq1$, $(u_{j,\nu,1/\lambda})_{\lambda_1\leq\lambda\in\mathbb{N}}$ forms a bounded sequence in $L^2_{p,n-1}(S_j,\varphi_\nu,\omega_{1/\lambda_1},\omega)$ due to the monotonicity of $|\bullet|^2_{\omega_{1/\lambda}}$, i.e. Lemma \ref{uniformity of (p,q)-forms not dV}.
    Therefore we can obtain a weakly convergent subsequence in $L^2_{p,n-1}(S_j,\varphi_\nu,\omega_{1/\lambda_1},\omega)$. By using a diagonal argument, we get a subsequence $(u_{j,\nu,\lambda_k})_{k\in\mathbb{N}}$ of $(u_{j,\nu,1/\lambda})_{\lambda\geq\lambda_1}$
    converging weakly in $L^2_{p,n-1}(S_j,\varphi_\nu,\omega_{1/\lambda_1},\omega)$ for any $\lambda_1$, where $u_{j,\nu,\lambda_k}\in L^2_{p,n-1}(S_j,\varphi_\nu,\omega_{1/\lambda_k},\omega)\subset L^2_{p,n-1}(S_j,\varphi_\nu,\omega_{1/\lambda_1},\omega)$. 
    We denote by $u_{j,\nu}$ the weak limit of $(u_{j,\nu,\lambda_k})_{k\in\mathbb{N}}$. Then $u_{j,\nu}$ satisfies $\overline{\partial}u_{j,\nu}=f$ on $S_j$ and 
    \begin{align*}
        \int_{S_j}|u_{j,\nu}|^2_{\omega_{\lambda_k}}e^{-\varphi_\nu} dV_\omega\leq\int_{S_j}\langle[\idd\varphi_\nu,\Lambda_\omega]^{-1}f,f\rangle_\omega e^{-\varphi_\nu}dV_\omega
    \end{align*}
    for each $k\in\mathbb{N}$. Taking weak limit $k\to+\infty$ and using the monotone convergence theorem, we have the following estimate
    \begin{align*}
        \int_{S_j}|u_{j,\nu}|^2_\omega e^{-\varphi_\nu} dV_\omega&\leq \int_{S_j}\langle[\idd\varphi_\nu,\Lambda_\omega]^{-1}f,f\rangle_\omega e^{-\varphi_\nu}dV_\omega\\
        &\leq\frac{1}{p\varepsilon_\nu}\int_{S_j}|f|_\omega^2e^{-\varphi}dV_\omega\leq\frac{2}{p\varepsilon}\int_{S}|f|_\omega^2e^{-\varphi}dV_\omega<+\infty,
    \end{align*}
    i.e. $u_{j,\nu}\in L^2_{p,n-1}(S_j,\varphi_\nu,\omega)$. 
    For fixed $\nu_1\geq\nu_0$, $(u_{j,\nu})_{\nu\geq\nu_1}$ forms a bounded sequence in $L^2_{p,n-1}(S_j,\varphi_{\nu_1},\omega)$ due to the monotonicity of $(\varphi_\nu)_{\nu\in\mathbb{N}}$.
    Repeating the above argument and taking the weak limit $\nu\to+\infty$, we get a solution $u_j\in L^2_{p,n-1}(S_j,\varphi,\omega)$ of $\overline{\partial}u_j=f$ on $S_j$ such that 
    \begin{align*}
        p\varepsilon_\nu\int_{S_j}|u_j|^2_\omega e^{-\varphi_\nu}dV_\omega\leq\int_{S}|f|_\omega^2e^{-\varphi}dV_\omega,
    \end{align*}
    for each $\nu\in\mathbb{N}$. Taking weak limit $\nu\to+\infty$ and using the monotone convergence theorem, we have the following estimate
    \begin{align*}
        p\varepsilon\int_{S_j}|u_j|^2_\omega e^{-\varphi}dV_\omega\leq\int_{S}|f|_\omega^2e^{-\varphi}dV_\omega.
    \end{align*}

    Finally, repeating the above argument and taking the weak limit $j\to+\infty$, we get a solution $u\in L^2_{p,n-1}(S,\varphi,\omega)$ of $\overline{\partial}u=f$ on $S$ such that 
    \begin{align*}
        \int_S|u|^2_\omega e^{-\varphi}dV_\omega\leq\frac{1}{p\varepsilon}\int_{S}|f|_\omega^2e^{-\varphi}dV_\omega.
    \end{align*}
    From the above, this proof is completed.
\end{proof}

Then, use Proposition \ref{L2-estimate for (p,n)-forms on Stein} to prove Theorem \ref{line bdl big L2-estimate for (p,n)-forms}.

\vspace{3mm}

$\textit{Proof of Theorem \ref{line bdl big L2-estimate for (p,n)-forms}}.$
    By Serre's GAGA, there exists a hypersurface $H\subset X$ such that $X\setminus H$ is Stein and $L$ is trivial over $X\setminus H$. From Proposition \ref{L2-estimate for (p,n)-forms on Stein}, for any $\overline{\partial}$-closed $f\in L^2_{p,n}(X,L,h,\omega)$ 
    there exists $u\in L^2_{p,n-1}(X\setminus H,-\log\mathrm{det}\,h,\omega)=L^2_{p,n-1}(X\setminus H,L,h,\omega)$ such that $\overline{\partial}u=f$ and 
    \begin{align*}
        \int_{X\setminus H}|u|^2_{h,\omega}dV_\omega&\leq\frac{1}{p\varepsilon}\int_{X\setminus H}|f|^2_{h,\omega}dV_\omega\\
        &\leq\frac{1}{p\varepsilon}\int_X|f|^2_{h,\omega}dV_\omega<+\infty.
    \end{align*}

    Letting $u=0$ on $H$, we have that $u\in L^2_{p,n-1}(X,L,h,\omega)$, $\overline{\partial}u=f$ on $X$ and 
    \begin{align*}
        \int_X|u|^2_{h,\omega}dV_\omega&\leq\frac{1}{p\varepsilon}\int_X|f|^2_{h,\omega}dV_\omega,
    \end{align*}
    from the following lemma. \qed

\begin{lemma}\label{Ext d-equation for hypersurface}$\mathrm{(cf.\,[Ber10,\,Lemma\,5.1.3])}$
    Let $X$ be a complex manifold and $H$ be a hypersurface in $X$. Let $u$ and $f$ be (possibly bundle valued) forms in $L^2_{loc}$ of $X$ satisfying $\overline{\partial}u=f$ on $X\setminus H$.
    Then the same equation holds on $X$ (in the sense of distributions).
\end{lemma}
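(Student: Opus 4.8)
\emph{Sketch of the intended proof.} The assertion ``$\overline{\partial}u=f$ on $X$ in the sense of distributions'' is tested against smooth compactly supported forms, hence is local; using a partition of unity, a local holomorphic frame for the bundle (reducing to the scalar case), and a local defining function of the hypersurface, I would reduce to the situation where $X$ is a coordinate polydisc $D\subset\mathbb{C}^{n}$ with the euclidean metric, $u,f$ are scalar forms in $L^{2}_{\mathrm{loc}}(D)$ with $\overline{\partial}u=f$ on $D\setminus H$, and $H=\{s=0\}$ for some $s\in\mathcal{O}(D)$. It then suffices to check, for every test form $\psi$ of complementary bidegree, the identity $\int_{D}u\wedge\overline{\partial}\psi=\pm\int_{D}f\wedge\psi$ (the sign depending only on $\deg u$). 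The plan is to cut $u$ off near $H$ by smooth functions $\theta_{\varepsilon}$, push the equation from $D\setminus H$ to $D$ by the Leibniz rule, and make the error disappear by arranging that $\overline{\partial}\theta_{\varepsilon}$ is small in $L^{2}_{\mathrm{loc}}$.

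Concretely I would fix $\chi\in C^{\infty}(\mathbb{R},[0,1])$ with $\chi\equiv 1$ on $(-\infty,1]$ and $\chi\equiv 0$ on $[2,\infty)$ and set $\theta_{\varepsilon}:=\chi\!\big(\log(1/|s|)/\log(1/\varepsilon)\big)$, with the convention $\theta_{\varepsilon}\equiv 0$ on $\{|s|<\varepsilon^{2}\}$. Then $\theta_{\varepsilon}\in C^{\infty}(D)$, $0\le\theta_{\varepsilon}\le1$, $\theta_{\varepsilon}$ vanishes on the neighbourhood $\{|s|<\varepsilon^{2}\}$ of $H$, $\theta_{\varepsilon}\equiv1$ on $\{|s|>\varepsilon\}$, and $\theta_{\varepsilon}\to1$ pointwise on $D\setminus H$. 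Moreover $\overline{\partial}\theta_{\varepsilon}$ is supported in the shell $A_{\varepsilon}:=\{\varepsilon^{2}\le|s|\le\varepsilon\}$ and there $|\overline{\partial}\theta_{\varepsilon}|\le\frac{C}{\log(1/\varepsilon)}\cdot\frac{|\partial s|}{|s|}$. The engine of the argument is then the estimate
\[
\int_{K}|\overline{\partial}\theta_{\varepsilon}|^{2}\,dV\ \le\ \frac{C^{2}}{(\log(1/\varepsilon))^{2}}\int_{K\cap A_{\varepsilon}}\frac{|\partial s|^{2}}{|s|^{2}}\,dV\ \le\ \frac{C_{K}}{\log(1/\varepsilon)}\ \xrightarrow[\varepsilon\to0]{}\ 0
\]
for every compact $K\subset D$; equivalently, $\int_{K\cap A_{\varepsilon}}|\partial s|^{2}/|s|^{2}\,dV=O(\log(1/\varepsilon))$.

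This last estimate is the one genuinely technical point, and the step where the real codimension $2$ of $H$ is used. One route: on $D\setminus H$ the function $\log|s|$ is pluriharmonic, hence harmonic, so $v:=-\log|s|$ satisfies $\Delta v=0$ there and $|\partial s|^{2}/|s|^{2}=\tfrac12\Delta(v^{2})$; a Stokes/divergence computation on the shell $A_{\varepsilon}=\{\log(1/\varepsilon)<v<2\log(1/\varepsilon)\}$ expresses $\int_{K\cap A_{\varepsilon}}\Delta(v^{2})$ as boundary flux terms of size $O(\log(1/\varepsilon))$ times the flux of $\nabla v$ through a level set of $v$, which is bounded uniformly because $i\partial\overline{\partial}\log|s|$ is the current of integration over $\operatorname{div}(s)$ and has locally finite mass (Lelong--Poincar\'e). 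A cleaner rigorous route is to pull back by a resolution $\pi\colon\widetilde{D}\to D$ making $\pi^{*}\operatorname{div}(s)$ simple normal crossings and to use $(\pi^{*}\omega_{\mathrm{eucl}})^{n-1}\le C\,\omega_{\widetilde{D}}^{\,n-1}$ on compacts together with $i\,\partial\alpha\wedge\overline{\partial\alpha}\wedge\omega^{n-1}/(n-1)!=|\partial\alpha|^{2}_{\omega}\,dV_{\omega}$; this reduces matters to the model $s=z_{1}^{m_{1}}\cdots z_{k}^{m_{k}}\cdot(\text{unit})$, where $|\partial s|^{2}/|s|^{2}\lesssim\sum_{j}m_{j}^{2}/|z_{j}|^{2}$ and $\int_{\{\varepsilon^{2}\le|s|\le\varepsilon\}\cap K'}|z_{j}|^{-2}\,dV=O(\log(1/\varepsilon))$ is an elementary one-variable computation. (This is just the classical fact that a complex hypersurface has zero $W^{1,2}$-capacity.) I expect this estimate to be the main obstacle; everything else is soft.

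Given it, I would finish as follows. Let $\psi$ be a test form, $K:=\operatorname{supp}\psi$. Since $u\in L^{1}_{\mathrm{loc}}$ and $|\theta_{\varepsilon}u\wedge\overline{\partial}\psi|\le|u\wedge\overline{\partial}\psi|$, dominated convergence gives $\int_{D}u\wedge\overline{\partial}\psi=\lim_{\varepsilon\to0}\int_{D}\theta_{\varepsilon}u\wedge\overline{\partial}\psi$. Because $\theta_{\varepsilon}u$ vanishes on a neighbourhood of $H$, on which $\overline{\partial}u=f$ holds, the Leibniz rule holds across $H$ and yields $\overline{\partial}(\theta_{\varepsilon}u)=\overline{\partial}\theta_{\varepsilon}\wedge u+\theta_{\varepsilon}f$ as forms with $L^{1}_{\mathrm{loc}}(D)$ coefficients; integrating by parts (the definition of the distributional $\overline{\partial}$),
\[
\int_{D}\theta_{\varepsilon}u\wedge\overline{\partial}\psi=\pm\Big(\int_{D}\overline{\partial}\theta_{\varepsilon}\wedge u\wedge\psi+\int_{D}\theta_{\varepsilon}f\wedge\psi\Big).
\]
By dominated convergence $\int_{D}\theta_{\varepsilon}f\wedge\psi\to\int_{D}f\wedge\psi$, while by Cauchy--Schwarz $\big|\int_{D}\overline{\partial}\theta_{\varepsilon}\wedge u\wedge\psi\big|\le\|\psi\|_{C^{0}}\,\|\overline{\partial}\theta_{\varepsilon}\|_{L^{2}(K)}\,\|u\|_{L^{2}(K)}\to0$ by the key estimate (the factor $\|u\|_{L^{2}(K)}$ being finite and fixed). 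Hence $\int_{D}u\wedge\overline{\partial}\psi=\pm\int_{D}f\wedge\psi$, i.e.\ $\overline{\partial}u=f$ on $X$ in the sense of distributions. Alternatively, one may simply invoke the general removability theorem for $L^{2}$ forms with $L^{2}$ $\overline{\partial}$ across closed analytic sets of real codimension $\ge 2$, which packages exactly this cut-off argument.
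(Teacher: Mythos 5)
Your proof is correct, and it is essentially the argument of the reference the paper cites for this lemma ([Ber10, Lemma 5.1.3]) — the paper itself gives no proof beyond that citation. The cut-off functions $\theta_{\varepsilon}=\chi(\log(1/|s|)/\log(1/\varepsilon))$ with $\|\overline{\partial}\theta_{\varepsilon}\|_{L^{2}(K)}^{2}=O(1/\log(1/\varepsilon))$, combined with Cauchy--Schwarz against $u\in L^{2}_{\mathrm{loc}}$, is exactly the standard removable-singularity argument for $\overline{\partial}$ across analytic sets, and your treatment of the key shell estimate (including the reduction to the normal crossings model) is sound.
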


Finally, we prove Theorem \ref{Ext Nadel V-thm} using Theorem \ref{line bdl big L2-estimate for (p,n)-forms} and the following lemma and theorem.

\begin{lemma}\label{Regularity of currents for (p,0)-forms}$(\mathrm{Dolbeault}$-$\mathrm{Grothendieck \,lemma},\, \mathrm{cf.\,[Dem}$-$\mathrm{book,\,ChapterI]} )$ 
    Let $T$ be a current of type $(p,0)$ on some open subset $U\subset \mathbb{C}^n$. If $T$ is $\overline{\partial}$-closed then it is a holomorphic differential form, i.e. a smooth differential form with holomorphic coefficients.
\end{lemma}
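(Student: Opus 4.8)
The plan is to reduce the statement to the scalar (degree-zero) case and then invoke elliptic regularity for the Cauchy--Riemann system. Fix local holomorphic coordinates $z=(z_1,\ldots,z_n)$ on $U$ and write
\begin{align*}
    T=\sum_{|I|=p}T_I\,dz_I,
\end{align*}
where each coefficient $T_I$ is a distribution on $U$. Since $\overline{\partial}(dz_I)=0$, one has
\begin{align*}
    \overline{\partial}T=\sum_{|I|=p}\sum_{j=1}^{n}\frac{\partial T_I}{\partial\overline{z}_j}\,d\overline{z}_j\wedge dz_I,
\end{align*}
and, the $(p,1)$-forms $d\overline{z}_j\wedge dz_I$ being linearly independent, the hypothesis $\overline{\partial}T=0$ is equivalent to $\partial T_I/\partial\overline{z}_j=0$ in the sense of distributions for every multi-index $I$ with $|I|=p$ and every $j=1,\ldots,n$. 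So it suffices to prove the scalar claim: a distribution $f$ on an open subset of $\mathbb{C}^n$ annihilated by all the operators $\partial/\partial\overline{z}_1,\ldots,\partial/\partial\overline{z}_n$ agrees with a holomorphic function.

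For this I would mollify. Let $\rho_\varepsilon$ be a radial approximate identity; the smooth functions $f_\varepsilon:=f\ast\rho_\varepsilon$, defined on $U_\varepsilon=\{z\in U:\mathrm{dist}(z,\partial U)>\varepsilon\}$, again satisfy $\partial f_\varepsilon/\partial\overline{z}_j=(\partial f/\partial\overline{z}_j)\ast\rho_\varepsilon=0$, hence are holomorphic, while $f_\varepsilon\to f$ in $\mathcal{D}'(U)$ as $\varepsilon\to0$. Since a holomorphic function is harmonic and therefore equal to its radial averages, for $z$ in a fixed relatively compact open subset and a fixed small $r>0$ we have $f_\varepsilon(z)=\langle f_\varepsilon,\rho_r(\,\cdot-z)\rangle$, and the right-hand side converges, uniformly in such $z$ (by continuity of $z\mapsto\rho_r(\,\cdot-z)$ into $\mathcal{D}(U)$ together with the uniform boundedness principle), to $\langle f,\rho_r(\,\cdot-z)\rangle$. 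Hence $(f_\varepsilon)$ converges locally uniformly to a holomorphic function $\widetilde{f}$, which must coincide with $f$ by uniqueness of distributional limits. Equivalently, one may observe that $\partial f/\partial\overline{z}_j=0$ for all $j$ forces $\Delta f=0$, so $f$ is harmonic and thus $C^\infty$ by Weyl's lemma, and then $\overline{\partial}f=0$ holds classically.

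Putting the two steps together, every $T_I$ is a holomorphic function on $U$, so $T=\sum_{|I|=p}T_I\,dz_I$ is a holomorphic $p$-form, and the conclusion is chart-independent because the transition matrices of $\Lambda^{p}T_X^{*}$ are holomorphic. The only substantive ingredient is the regularity step for the scalar Cauchy--Riemann equations; everything else is linear bookkeeping in the coefficients. I do not anticipate a genuine obstacle: this is a classical lemma, and the argument above (mean-value property plus a normal-families type estimate, or equivalently Weyl's lemma) is entirely standard.
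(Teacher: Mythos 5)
Your argument is correct. Note that the paper does not prove this lemma at all: it is quoted as the classical Dolbeault--Grothendieck regularity lemma with a citation to Demailly's book, Chapter I, so there is no internal proof to compare against. Your reduction to the scalar Cauchy--Riemann system (writing $T=\sum_{|I|=p}T_I\,dz_I$ and using the linear independence of the $d\overline{z}_j\wedge dz_I$) and the subsequent regularity step are exactly the standard treatment; both of your routes for the scalar step work, and the second one (from $\partial T_I/\partial\overline{z}_j=0$ for all $j$ one gets $\Delta T_I=4\sum_j\partial_{z_j}\partial_{\overline{z}_j}T_I=0$, then Weyl's lemma, then the classical Cauchy--Riemann equations) is the shortest complete justification. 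If you keep the mollification route instead, the cleanest phrasing of the uniform-convergence step is the identity $f_\varepsilon=f_\varepsilon\ast\rho_r=(f\ast\rho_r)\ast\rho_\varepsilon\to f\ast\rho_r$ locally uniformly, which avoids invoking Banach--Steinhaus, but your version is also valid.
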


\begin{theorem}\label{Hor Thm 4.4.2}$(\mathrm{cf.\,[Hor90,\, Theorem\, 4.4.2]})$
    Let $\Omega$ be a pseudoconvex open set in $\mathbb{C}^n$ and $\varphi$ be any plurisubharmonic function in $\Omega$.
    For any $f\in L^2_{p,q+1}(\Omega,\varphi)$ with $\overline{\partial}f=0$ there exists a solution $u\in L^2_{p,q}(\Omega,\mathrm{loc})$ of the equation $\overline{\partial}u=f$ such that 
    \begin{align*}
        \int_\Omega|u|^2e^{-\varphi}(1+|z|^2)^{-2}dV_\Omega\leq\int_\Omega|f|^2e^{-\varphi}dV_\Omega.
    \end{align*}
\end{theorem}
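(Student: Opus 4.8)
The plan is to reproduce the classical H\"ormander--Andreotti--Vesentini $L^2$-method, in the single-weight formulation that is most convenient here: first reduce to a smooth strictly plurisubharmonic weight on a relatively compact pseudoconvex piece carrying a complete K\"ahler metric, then establish the basic a priori estimate, then solve $\overline{\partial}u=f$ by Hilbert space duality, and finally pass to the limit. For the reduction: a pseudoconvex $\Omega\subset\mathbb{C}^n$ admits a smooth strictly plurisubharmonic exhaustion $\chi$, whose sublevel sets $\Omega_c=\{\chi<c\}$ are relatively compact, pseudoconvex and exhaust $\Omega$, and each $\Omega_c$ carries a complete K\"ahler metric (the Euclidean metric corrected near $\partial\Omega_c$ by a term like $-\delta\log(c-\chi)$, with $\delta\downarrow0$ taken afterwards); regularizing $\varphi$ by convolution gives smooth plurisubharmonic $\varphi_\nu\searrow\varphi$ on $\Omega_c$. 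Thus it suffices to solve, with a bound uniform in $\delta,\nu,c$, the equation on $\Omega_c$ with smooth plurisubharmonic weight $\varphi_\nu$.

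Set $\psi=\varphi_\nu+2\log(1+|z|^2)$. A direct computation gives
\begin{align*}
    i\partial\overline{\partial}\log(1+|z|^2)&=i\sum_{j,k}\frac{(1+|z|^2)\delta_{jk}-z_j\overline{z}_k}{(1+|z|^2)^2}\,dz_j\wedge d\overline{z}_k\\
    &\geq\frac{1}{(1+|z|^2)^2}\,i\sum_{j}dz_j\wedge d\overline{z}_j,
\end{align*}
so $\psi$ is smooth and strictly plurisubharmonic with smallest curvature eigenvalue at least $2(1+|z|^2)^{-2}$. Since on $\mathbb{C}^n$ one has $\Lambda^pT^*_{\mathbb{C}^n}\cong\Lambda^{n-p}T_{\mathbb{C}^n}\otimes K_{\mathbb{C}^n}$ with both factors flat, a $(p,q+1)$-form is an $(n,q+1)$-form valued in a flat bundle, so the Bochner--Kodaira--Nakano--H\"ormander identity yields, for compactly supported smooth $v$,
\begin{align*}
    \|\overline{\partial}v\|^2_\psi+\|\overline{\partial}^*_\psi v\|^2_\psi&\geq\int_{\Omega_c}\langle[i\partial\overline{\partial}\psi,\Lambda]v,v\rangle e^{-\psi}\,dV\\
    &\geq\int_{\Omega_c}\frac{2(q+1)}{(1+|z|^2)^2}|v|^2e^{-\psi}\,dV,
\end{align*}
and completeness of the metric extends this to all $v$ in the common domain of $\overline{\partial}$ and $\overline{\partial}^*_\psi$.

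Projecting $v$ onto $\ker\overline{\partial}$, using $\overline{\partial}f=0$, and applying Cauchy--Schwarz with the weight $2(q+1)(1+|z|^2)^{-2}$, one sees that $\overline{\partial}^*_\psi v\mapsto\langle f,v\rangle_\psi$ is a bounded linear functional on the $\overline{\partial}^*_\psi$-images, of norm at most $\bigl(\tfrac{1}{2(q+1)}\int_{\Omega_c}(1+|z|^2)^2|f|^2e^{-\psi}\,dV\bigr)^{1/2}$; Hahn--Banach together with the Riesz representation theorem then produces $u$ with $\overline{\partial}u=f$ and
\begin{align*}
    \int_{\Omega_c}|u|^2e^{-\psi}\,dV&\leq\frac{1}{2(q+1)}\int_{\Omega_c}(1+|z|^2)^2|f|^2e^{-\psi}\,dV\\
    &=\frac{1}{2(q+1)}\int_{\Omega_c}|f|^2e^{-\varphi_\nu}\,dV.
\end{align*}
Since $e^{-\psi}=(1+|z|^2)^{-2}e^{-\varphi_\nu}$ and $\tfrac{1}{2(q+1)}\leq1$, this is precisely the asserted estimate on $(\Omega_c,\varphi_\nu)$.

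Finally, the bounds being uniform, I would take weak limits in the appropriate weighted $L^2$ spaces: first $\delta\downarrow0$, then $\nu\to\infty$ (using $\varphi_\nu\searrow\varphi$, hence $e^{-\varphi_\nu}\nearrow e^{-\varphi}$, together with weak lower semicontinuity of the $L^2$-norm and monotone convergence), then a diagonal argument as $c\to\infty$, obtaining $u$ on $\Omega$ with $\overline{\partial}u=f$ and the stated inequality for $\varphi$. That $u\in L^2_{p,q}(\Omega,\mathrm{loc})$ follows since a plurisubharmonic function is locally bounded above, so $e^{-\varphi}$ is locally bounded below by a positive constant. I expect the main obstacle to be purely technical bookkeeping in this last step --- guaranteeing density of compactly supported smooth forms (which is exactly why the auxiliary complete metrics are introduced), keeping the three successive limits mutually compatible, and checking that no constant is lost --- rather than any single difficult estimate beyond the curvature computation above.
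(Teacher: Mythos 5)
Your argument is correct; the only thing to point out is that the paper does not prove this statement at all --- it is imported verbatim as a citation of [Hor90, Theorem 4.4.2] and used as a black box in the resolution argument for Theorem \ref{Ext Nadel V-thm}. Your proof is the standard one, but in the Andreotti--Vesentini/Demailly formulation (complete K\"ahler metrics on the sublevel sets $\Omega_c$, the Bochner--Kodaira inequality, and the reduction of $(p,q+1)$-forms to $(n,q+1)$-forms with values in the flat bundle $\Lambda^{n-p}T_{\mathbb{C}^n}$ so that the curvature term has the favorable sign independently of $p$), whereas H\"ormander's original proof in the cited source runs through three weights $\varphi-2\psi,\varphi-\psi,\varphi$ with $\psi=2\log(1+|z|^2)$ chosen so that cutoff functions $\eta_\nu$ satisfy $|\overline{\partial}\eta_\nu|^2\le e^{\psi}$ --- which is the original source of the factor $(1+|z|^2)^{-2}$ --- together with a density lemma in the graph norm instead of complete metrics. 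Both routes are classical and yield the same conclusion; yours even recovers the slightly sharper constant $\tfrac{1}{2(q+1)}$ on the right-hand side. The one step you defer to ``bookkeeping'' --- comparing the curvature operator and the weighted norms between the Euclidean metric and the auxiliary complete metrics $\omega_\delta$ before letting $\delta\downarrow 0$ --- is exactly the monotonicity statement the paper establishes separately as Lemma \ref{uniformity of (p,n)-forms} and Lemma \ref{uniformity of (p,q)-forms not dV}, so it is genuinely routine and your sketch closes.
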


\vspace{1mm}

$\textit{Proof of Theorem \ref{Ext Nadel V-thm}}.$
    We define the subsheaf $\mathscr{L}^{p,q}_{L,h}$ of germs of $(p,q)$-forms $u$ with values in $L$ and with measurable coefficients such that both $|u|^2_{h}$ and $|\overline{\partial}u|^2_h$ are locally integrable.
    And we consider the following sheaves sequence:
    \begin{align*}
        \xymatrix{
         0 \ar[r] & \mathrm{ker}\,\overline{\partial}_0 \hookrightarrow \mathscr{L}^{p,0}_{L,h} \ar[r]^-{\overline{\partial}_0} & \mathscr{L}^{p,1}_{L,h} \ar[r]^-{\overline{\partial}_1} & \cdots \ar[r]^-{\overline{\partial}_{n-1}} & \mathscr{L}^{p,n}_{L,h} \ar[r] & 0.
    }
    \end{align*}

    For any $x_0\in X$, there exists a bounded Stein open neighborhood $\Omega$ of $x_0$ such that $L|_\Omega$ is trivial. Then $-\log h$ is strictly plurisubharmonic function on $\Omega$ and $L^2_{p,q}(\Omega,L,h,\omega)=L^2_{p,q}(\Omega,-\log h,\omega)$.
    From Theorem \ref{Hor Thm 4.4.2}, for any $f\in L^2_{p,q}(\Omega,-\log h,\omega)$ with $\overline{\partial}f=0$ there exists a solution $u\in L^2_{p,q-1}(\Omega,\mathrm{loc})$ of the equation $\overline{\partial}u=f$ such that 
    \begin{align*}
        \inf_{z\in\Omega}\frac{1}{(1+|z|^2)^2}\int_\Omega|u|^2e^{\log h}dV_\omega\leq\int_\Omega|u|^2e^{\log h}(1+|z|^2)^{-2}dV_\omega\leq\int_\Omega|f|^2e^{\log h}dV_\omega<+\infty.
    \end{align*}
    Since boundedness of $\Omega$, we get $0<\inf_{z\in\Omega}(1+|z|^2)^{-2}$ and $u\in L^2_{p,q-1}(\Omega,-\log h,\omega)=L^2_{p,q-1}(\Omega,L,h,\omega)$.
    Then we have that the above sheaves sequence is exact.

    From Lemma \ref{Regularity of currents for (p,0)-forms}, the kernel of $\overline{\partial}_0$ consists of all germs of holomorphic $(p,0)$-forms with values in $L$ which satisfy the integrability condition
    and we have that $\mathrm{ker}\,\overline{\partial}_0=\Omega_X^p\otimes L\otimes \mathscr{I}(h)$.
    In fact, for any locally open subset $U\subset \mathbb{C}^n$ we obtain
    \begin{align*}
        f\in\mathrm{ker}\,\overline{\partial}_0(U) &\iff f=\sum f_Idz_I\in H^0(U,\Omega^p_X\otimes L)~\,\, \mathrm{such~ that}  \\
        & \int_U|f|^2_{h,\omega}dV_\omega=\int_U|f|^2e^{\log h}dV_\omega=\sum\int_U|f_I|^2e^{\log h}dV_\omega<+\infty.
    \end{align*}
    Therefore any $f_I\in H^0(U,\mathbb{C})$ satisfy the condition $f_I\in\mathscr{I}(h)(U)$.

    Since acyclicity of each $\mathscr{L}^{p,q}_{L,h}$, we obtain that 
    \begin{align*}
        H^q(X,\Omega_X^p\otimes L\otimes \mathscr{I}(h))\cong H^q(\Gamma(X,\mathscr{L}^{p,\bullet}_{L,h})).
    \end{align*}
    By Theorem \ref{line bdl big L2-estimate for (p,n)-forms}, we conclude that $H^n(\Gamma(X,\mathscr{L}^{p,\bullet}_{L,h}))=0$.    \qed

\vspace{2mm}

From the Demailly-Nadel vanishing theorem and Theorem \ref{Ext Nadel V-thm}, we get the following results (= extension of the Demailly-Nadel vanishing theorem) immediately:

Let $X$ be a projective manifold of dimension $n$ equipped with a \kah metric $\omega$ on $X$. Let $L$ be a holomorphic line bundle on $X$ equipped with a singular Hermitian metric $h$.
    We assume that 
    \begin{align*}
        i\Theta_{L,h}\geq\varepsilon\omega
    \end{align*}
    in the sense of currents for some $\varepsilon>0$. Then we have that
    \begin{align*}
        H^p(X,K_X\otimes L\otimes\mathscr{I}(h))&=0,\\
        H^n(X,\Omega_X^p\otimes L\otimes\mathscr{I}(h))&=0
    \end{align*}
    for $p>0$.

\begin{remark}
The above extension of the Demailly-Nadel vanishing theorem cannot be extended to the same bidegree $(p,q)$ with $p+q>n$ as the Nakano-Akizuki-Kodaira type vanishing theorem.
\end{remark}

In fact, Ramanujam has given in the following counterexample to the extension of the Nakano-Akizuki-Kodaira type vanishing theorem to nef and big line bundles.

\vspace{2mm}

$\mathbf{Counterexample.}~\,\mathrm{(cf.\,[Ram72],\,[Dem}$-$\mathrm{book,\,ChapterVII])}$
    Let $X$ be a blown up of one point in $\mathbb{P}^n$ and $\pi:X\to\mathbb{P}^n$ be the natural morphism. Clearly the line bundle $\pi^*\mathcal{O}_{\mathbb{P}^n}(1)$ is nef and big.
    Then we have the following non-vanishing cohomologies:
    \begin{align*}
        H^{p,p}(X,\pi^*\mathcal{O}_{\mathbb{P}^n}(1))\ne0 \quad \mathrm{for}\quad 0\leq p\leq n-1.
    \end{align*}

And, from the analytical characterization of nef and big line bundles (see \cite{Dem90}), there exist a singular Hermitian metric $h_{\pi^*O(1)}$ on $\pi^*\mathcal{O}_{\mathbb{P}^n}(1)$ such that 
$\mathscr{I}(h_{\pi^*O(1)})=\mathscr{O}_X$ and $i\Theta_{\pi^*\mathcal{O}_{\mathbb{P}^n}(1),h_{\pi^*O(1)}}\geq\varepsilon\omega$ in the sense of currents for some $\varepsilon>0$, where $\omega$ is a \kah metric on $X$.
Then we get the following counterexample:
\begin{align*}
    H^p(X,\Omega^p_X\otimes \pi^*\mathcal{O}_{\mathbb{P}^n}(1)\otimes \mathscr{I}(h_{\pi^*O(1)}))\cong H^{p,p}(X,\pi^*\mathcal{O}_{\mathbb{P}^n}(1))\ne0 \quad \mathrm{for}\quad 0\leq p\leq n-1.
\end{align*}

\section{Smooth Hermitian metrics and dual Nakano positivity}

Let $(X,\omega)$ be a complex manifold of complex dimension $n$ equipped with a Hermitian metric $\omega$ on $X$ and $(E,h)$ be a holomorphic Hermitian vector bundle of rank $r$ over $X$.
Let $D=D'+\overline{\partial}$ be the Chern connection of $(E,h)$, and $\Theta_{E,h}=[D',\overline{\partial}]=D'\overline{\partial}+\overline{\partial}D'$ be the Chern curvature tensor. 
Let $(U,(z_1,\cdots,z_n))$ be local coordinates. Denote by $(e_1,\cdots,e_r)$ an orthonormal frame of $E$ over $U\subset X$, and
\begin{align*}
    i\Theta_{E,h,x_0}=i\sum_{j,k}\Theta_{jk}dz_j\wedge d\overline{z}_k=i\sum_{j,k,\lambda,\mu}c_{jk\lambda\mu}dz_j\wedge d\overline{z}_k\otimes e^*_\lambda\otimes e_\mu,~\,\, \overline{c}_{jk\lambda\mu}=c_{kj\mu\lambda}.
\end{align*}
To $i\Theta_{E,h}$ corresponds a natural Hermitian form $\theta_{E,h}$ on $T_X\otimes E$ defined by 
\begin{align*}
    \theta_{E,h}(u):=\theta_{E,h}(u,u)&=\sum c_{jk\lambda\mu}u_{j\lambda}\overline{u}_{k\mu},~\,\, u=\sum u_{j\lambda}\frac{\partial}{\partial z_j}\otimes e_\lambda\in T_{X,x_0}\otimes E_x,\\
    \mathrm{i.e.}~\,\,\, \theta_{E,h}&=\sum c_{jk\lambda\mu}(dz_j\otimes e^*_\lambda)\otimes\overline{(dz_k\otimes e^*_\mu)}.
\end{align*}

\begin{definition}
    Let $X$ be a complex manifold and $(E,h)$ be a holomorphic Hermitian vector bundle over $X$.
    \begin{itemize}
        \item $(E,h)$ is said to be $\it{Griffiths ~positive}$ (resp. $\it{Griffiths ~semi}$-$\it{positive}$) if 
                    for any $\xi\in T_{X,x}$, $\xi\ne0$ and $s\in E_x$, $s\ne0$, we have \[ \theta_{E,h}(\xi\otimes s,\xi\otimes s)>0~\,\,(\mathrm{resp}. \geq0). \]
                    We write $(E,h)>_{Grif}0$, i.e. $i\Theta_{E,h}>_{Grif}0$ (resp. $\geq_{Grif}0$) for Griffiths positivity (resp. semi-positivity).
        \item $(E,h)$ is said to be $\it{Nakano ~positive}$ (resp. $\it{Nakano ~semi}$-$\it{positive}$) if $\theta_{E,h}$ is positive (resp. semi-positive) definite as a Hermitian form on $T_X\otimes E$, 
                    i.e. for any $u\in T_X\otimes E$, $u\ne0$, we have \[ \theta_{E,h}(u,u)>0~\,\,(\mathrm{resp}. \geq0). \]
                    We write $(E,h)>_{Nak}0$, i.e. $i\Theta_{E,h}>_{Nak}0$ (resp. $\geq_{Nak}0$) for Nakano positivity (resp. semi-positivity).
    \end{itemize}
\end{definition}

We introduce another notion about Nakano-type positivity.

\begin{definition}\label{Def dual Nakano positive smooth}$(\mathrm{cf.\,[LSY13,\,Definition\,2.1],\,[Dem20,\,Section\,1]})$
    Let $X$ be a complex manifold of dimension $n$ and $(E,h)$ be a holomorphic Hermitian vector bundle of rank $r$ over $X$.
    $(E,h)$ is said to be $\it{dual~Nakano ~positive}$ (resp. $\it{dual~Nakano ~semi}$-$\it{positive}$) if $(E^*,h^*)$ is Nakano negative (resp. Nakano semi-negative).
\end{definition}

From definitions, we see immediately that if $(E,h)$ is Nakano positive or dual Nakano positive then $(E,h)$ is Griffiths positive. And there is an example of dual Nakano positive as follows.
Let $h_{FS}$ be the Fubini-Study metric on $T_{\mathbb{P}^n}$, then $(T_{\mathbb{P}^n},h_{FS})$ is dual Nakano positive and Nakano semi-positive (cf.\,[LSY13,\,Corollary\,7.3]). 
$(T_{\mathbb{P}^n},h_{FS})$ is easyly shown to be ample, but it is not Nakano positive.
In fact, if $(T_{\mathbb{P}^n},h_{FS})$ is Nakano positive then from the Nakano vanishing theorem (see \cite{Nak55}), we have that 
\begin{align*}
    H^{n-1,n-1}(\mathbb{P}^n,\mathbb{C})=H^{n-1}(\mathbb{P}^n,\Omega^{n-1}_{\mathbb{P}^n})=H^{n-1}(\mathbb{P}^n,K_{\mathbb{P}^n}\otimes T_{\mathbb{P}^n})=0.
\end{align*}
However, this contradicts $H^{n-1,n-1}(\mathbb{P}^n,\mathbb{C})=\mathbb{C}$.

Here, the following theorem is known, which expresses the relationship for the three positivity, i.e. Griffiths, Nakano and dual Nakano.

\begin{theorem}\label{smooth Grif then Nak for tensor}$(\mathrm{cf.\,[DS79,\,Theorem\,1],\,[LSY13,\,Theorem\,7.2]})$
    Let $h$ be a smooth Hermitian metric on $E$. If $(E,h)$ is Griffiths semi-positive then $(E\otimes\mathrm{det}\,E,h\otimes\mathrm{det}\,h)$ is Nakano semi-positive and dual Nakano semi-positive.
\end{theorem}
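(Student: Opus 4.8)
The statement is the classical Demailly--Skoda theorem, and the plan is to reduce it to a single pointwise inequality for the Chern curvature tensor and then prove that inequality by averaging Griffiths positivity over the unit sphere of a fibre of $E$. First I would do the curvature bookkeeping at a fixed point $x_0\in X$: with local coordinates $(z_1,\dots,z_n)$ and an orthonormal frame $(e_1,\dots,e_r)$ of $E$ at $x_0$, write $i\Theta_{E,h,x_0}=i\sum c_{jk\lambda\mu}\,dz_j\wedge d\overline{z}_k\otimes e^*_\lambda\otimes e_\mu$ as in Lemma \ref{calculate (p,q)-forms}. Taking the trace over the bundle indices yields the curvature of $\mathrm{det}\,E$, namely $i\Theta_{\mathrm{det}\,E,\mathrm{det}\,h,x_0}=i\sum_{j,k}d_{jk}\,dz_j\wedge d\overline{z}_k$ with $d_{jk}:=\sum_{s=1}^r c_{jkss}$; and since $\Theta_{E\otimes\mathrm{det}\,E}=\Theta_E\otimes\mathrm{Id}+\mathrm{Id}\otimes\Theta_{\mathrm{det}\,E}$, the bundle $(E\otimes\mathrm{det}\,E,h\otimes\mathrm{det}\,h)$ has curvature coefficients $\widetilde c_{jk\lambda\mu}:=c_{jk\lambda\mu}+\delta_{\lambda\mu}d_{jk}$ in the orthonormal frame $(e_\lambda\otimes e_1\wedge\cdots\wedge e_r)$. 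Thus Nakano semi-positivity of $(E\otimes\mathrm{det}\,E,h\otimes\mathrm{det}\,h)$ is precisely the pointwise assertion that $\sum_{j,k,\lambda,\mu}\widetilde c_{jk\lambda\mu}u_{j\lambda}\overline{u}_{k\mu}\geq0$ for every $u=\sum u_{j\lambda}\,\partial/\partial z_j\otimes e_\lambda\in T_{X,x_0}\otimes E_{x_0}$.

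To prove this pointwise inequality I would fix such a $u$ and, for $a=(a_1,\dots,a_r)$ in the unit sphere $S^{2r-1}\subset\mathbb{C}^r$, set $\xi(a):=\sum_j\bigl(\sum_\nu u_{j\nu}\overline{a}_\nu\bigr)\,\partial/\partial z_j\in T_{X,x_0}$. Griffiths semi-positivity of $(E,h)$ gives $\theta_{E,h}(\xi(a)\otimes a)\geq0$ for every $a$, so the average of this quantity over $S^{2r-1}$ with the standard rotation-invariant probability measure $d\sigma$ is $\geq0$. On the other hand, expanding $\theta_{E,h}(\xi(a)\otimes a)=\sum_{j,k,\lambda,\mu,\nu,\tau}c_{jk\lambda\mu}u_{j\nu}\overline{u}_{k\tau}\,\overline{a}_\nu a_\lambda a_\tau\overline{a}_\mu$ and inserting the fourth-moment identity $\int_{S^{2r-1}}a_\lambda a_\tau\overline{a}_\mu\overline{a}_\nu\,d\sigma=\frac{1}{r(r+1)}(\delta_{\lambda\mu}\delta_{\tau\nu}+\delta_{\lambda\nu}\delta_{\tau\mu})$, the $\delta_{\lambda\nu}\delta_{\tau\mu}$-part collapses to $\sum_{j,k,\lambda,\mu}c_{jk\lambda\mu}u_{j\lambda}\overline{u}_{k\mu}$ and the $\delta_{\lambda\mu}\delta_{\tau\nu}$-part to $\sum_{j,k,\nu}d_{jk}u_{j\nu}\overline{u}_{k\nu}$, so that
\[
\int_{S^{2r-1}}\theta_{E,h}(\xi(a)\otimes a)\,d\sigma=\frac{1}{r(r+1)}\Bigl(\sum_{j,k,\nu}d_{jk}u_{j\nu}\overline{u}_{k\nu}+\sum_{j,k,\lambda,\mu}c_{jk\lambda\mu}u_{j\lambda}\overline{u}_{k\mu}\Bigr)=\frac{1}{r(r+1)}\sum_{j,k,\lambda,\mu}\widetilde c_{jk\lambda\mu}u_{j\lambda}\overline{u}_{k\mu}.
\]
Since the left-hand side is $\geq0$, the pointwise inequality follows and $(E\otimes\mathrm{det}\,E,h\otimes\mathrm{det}\,h)$ is Nakano semi-positive.

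For the dual Nakano statement I would apply the identity of the previous paragraph to $(E^*,h^*)$: it is Griffiths semi-negative, hence the same averaging computation (with a non-positive integrand) shows $\theta_{E^*\otimes\mathrm{det}\,E^*,\,h^*\otimes\mathrm{det}\,h^*}(u,u)\leq0$ for all $u$, i.e. $(E^*\otimes\mathrm{det}\,E^*,h^*\otimes\mathrm{det}\,h^*)$ is Nakano semi-negative; as this is the dual of $(E\otimes\mathrm{det}\,E,h\otimes\mathrm{det}\,h)$ with the dual metric, Definition \ref{Def dual Nakano positive smooth} gives the dual Nakano semi-positivity. The only step needing real care is the fourth-moment identity on $S^{2r-1}$, which I would verify by writing $a=g/|g|$ with $g$ a standard complex Gaussian vector on $\mathbb{C}^r$, using Wick's formula $\mathbb{E}[g_\lambda g_\tau\overline{g}_\mu\overline{g}_\nu]=\delta_{\lambda\mu}\delta_{\tau\nu}+\delta_{\lambda\nu}\delta_{\tau\mu}$, the independence of $g/|g|$ from $|g|$, and $\mathbb{E}[|g|^4]=r(r+1)$; everything else is routine rewriting of curvature coefficients, and since $h$ is smooth and Griffiths semi-positivity is a pointwise condition, no approximation or global argument enters. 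The main (mild) obstacle is simply keeping the bundle and form indices straight in the expansion above.
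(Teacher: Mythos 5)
Your proposal is correct, and it proves the right pointwise statement: the identity $\widetilde c_{jk\lambda\mu}=c_{jk\lambda\mu}+\delta_{\lambda\mu}d_{jk}$ for the curvature of the twist, the fourth-moment formula on $S^{2r-1}$ (whose normalization checks out: $\int|a_1|^4=2/(r(r+1))$, $\int|a_1|^2|a_2|^2=1/(r(r+1))$), and the duality step $(E\otimes\mathrm{det}\,E)^*\cong E^*\otimes\mathrm{det}\,E^*$ are all handled properly. The paper itself does not print a proof — it quotes [DS79, Theorem 1] and [LSY13, Theorem 7.2] — and the argument it relies on (visible in a suppressed draft of the source) is the classical Demailly--Skoda one: the same philosophy of exhibiting $\theta_{E\otimes\mathrm{det}\,E,h\otimes\mathrm{det}\,h}(u)$ as a positive combination of Griffiths forms $\theta_{E,h}(\xi\otimes s)$, but implemented by a \emph{discrete} average over $r$-tuples $\sigma$ of $q$-th roots of unity, via the identity $q^{-r}\sum_{\sigma}x'_\sigma\overline{y}'_\sigma\sigma_\alpha\overline{\sigma}_\beta$, which produces the decomposition $\theta_{E\otimes\mathrm{det}E}(u)=q^{-r}\sum_\sigma\theta_{E,h}(\widehat u_\sigma\otimes\widehat e_\sigma)+\sum_\lambda\theta_{E,h}(u_\lambda\otimes e_\lambda)$. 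Your continuous average over the unit sphere captures both summands in a single integral with the uniform coefficient $1/(r(r+1))$, which is arguably more symmetric and dispenses with the separate "diagonal" term; the discrete version buys a purely algebraic, finite identity that needs no integration theory or Wick calculus. Either way the heart of the matter — Griffiths positivity tested on the decomposable vectors $\xi(a)\otimes a$ reconstructs the full Nakano form of $E\otimes\mathrm{det}\,E$ after averaging — is identical, so your proof is a legitimate variant of the standard one rather than a new method.
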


Let $\mathcal{E}^{p,q}(E)$ be the sheaf of germs of $\mathcal{C}^\infty$ sections of $\Lambda^{p,q}T^*_X\otimes E$ and $\mathcal{D}^{p,q}(E)$ be the space of $\mathcal{C}^\infty$ sections of $\Lambda^{p,q}T^*_X\otimes E$ with compact support on $X$.

Deng, Ning, Wang and Zhou introduced a positive notion of H\"ormander type in \cite{DNWZ20}, which is named as $\textit{the optimal}$ $L^p$-$\textit{estimate condition}$ and characterizes Nakano semi-positivity, i.e. $A^{n,1}_{E,h}\geq0$, for holomorphic vector bundles $(E,h)$. 
Then we introduced the following positive notion of H\"ormander type in \cite{Wat21}, which is an extension of the optimal $L^2$-estimate condition from $(n,1)$-forms to $(p,n)$-forms and which characterizes the condition $A^{p,n}_{E,h}\geq0$ (see Theorem \ref{(p,n)-condition iff A semi-posi}).

\begin{definition}\label{def (p,n)-condition}$\mathrm{(cf.\,[Wat22,\,Definition\,1.4]})$
    Let $(X,\omega)$ be a \kah manifold of dimension $n$ which admits a positive holomorphic Hermitian line bundle and $E$ be a holomorphic vector bundle over $X$ equipped with a (singular) Hermitian metric $h$.
    $(E,h)$ satisfies $\it{the}$ $(p,n)$-$L^2_\omega$-$\it{estimate~ condition}$ on $X$, 
    if for any positive holomorphic Hermitian line bundle $(A,h_A)$ on $X$ and for any $f\in\mathcal{D}^{p,n}(X,E\otimes A)$ with $\overline{\partial}f=0$,
    there is $u\in L^2_{p,n-1}(X,E\otimes A)$ satisfying $\overline{\partial}u=f$ and 
    \[ \int_X|u|^2_{h\otimes h_A,\omega}dV_\omega\leq\int_X\langle[i\Theta_{A,h_A}\otimes \mathrm{id}_E,\Lambda_\omega]^{-1}f,f\rangle_{h\otimes h_A,\omega} dV_\omega, \]
    provided that the right hand side is finite.

    And $(E,h)$ satisfies $\it{the}$ $(p,n)$-$L^2$-$\it{estimate~ condition}$ on $X$ if for any \kah metric $\tilde{\omega}$, $(E,h)$ satisfies $\it{the}$ $(p,n)$-$L^2_{\tilde{\omega}}$-$\it{estimate~ condition}$ on $X$
\end{definition}

\begin{theorem}\label{(p,n)-condition iff A semi-posi}$\mathrm{(cf.\,[Wat22,\,Theorem\,1.6]})$
    Let $(X,\omega)$ be a \kah manifold of dimension $n$ which admits a positive holomorphic Hermitian line bundle and $(E,h)$ be a holomorphic Hermitian vector bundle over $X$ and $p$ be a nonnegative integer.
    Then $(E,h)$ satisfies the $(p,n)$-$L^2_\omega$-estimate condition on $X$ if and only if $A^{p,n}_{E,h,\omega}\geq0$.
\end{theorem}

Here, as is well known, we know the following two facts about smooth Hermitian metrics $h$ on $E$: Let $(X,\omega)$ be a \kah manifold.
\begin{align*}
    A^{n,1}_{E,h,\omega}\geq0 ~\,(\mathrm{resp}. \leq0) \Longrightarrow A^{n,q}_{E,h,\omega}\geq0 ~\,(\mathrm{resp}. \leq0)\,~ \mathrm{for\,all}\,~ q\geq1 \quad&(\mathrm{see \,[Dem82]}), \\
    (E,h)\geq_{Nak}\!0 ~\,(\mathrm{resp}. \leq_{Nak}\!0) \iff A^{n,1}_{E,h,\omega}\geq0 ~\,(\mathrm{resp}. \leq0) \,~\quad\qquad&(\mathrm{see\, [DNWZ20]}). 
\end{align*}
Therefore, from these two facts, Lemma \ref{characterization of A(p,q)} and the definition of Nakano semi-positivity, we obtain the following characterizations:
\begin{align*}
    (E,h) ~\textit{is Nakano semi-positive} &\iff A^{n,q}_{E,h,\omega}\geq0 ~\,\textit{for\,all}~\,q\geq1, \tag*{(a)}\\
    (E,h) ~\textit{is dual Nakano semi-positive} &\iff A^{p,n}_{E,h,\omega}\geq0 ~\,\textit{for\,all}~\,p\geq1. \tag*{(b)}
\end{align*}

\begin{lemma}\label{characterization of A(p,q)}$\mathrm{(cf.\,[Wat22,\,Theorem\,2.3\,\,and\,\,2.5])}$
    Let $(X,\omega)$ be a Hermitian manifold and $(E,h)$ be a holomorphic vector bundle over $X$. We have that 
    \begin{align*}
        A^{n-p,n-q}_{E^*,h^*,\omega}\geq0~(\textit{resp.} \leq0) \iff A^{p,q}_{E,h,\omega}\geq0~(\textit{resp.} \leq0) \iff A^{n-q,n-p}_{E,h,\omega}\leq0~(\textit{resp.} \geq0).
    \end{align*}
\end{lemma}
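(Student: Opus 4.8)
The plan is to prove both equivalences by a single pointwise computation at an arbitrary point $x_0\in X$, feeding everything into the explicit formula of Lemma \ref{calculate (p,q)-forms}. First I would fix local coordinates $(z_1,\dots,z_n)$ with $(\partial/\partial z_j)$ being $\omega$-orthonormal at $x_0$ and an $h$-orthonormal frame $(e_1,\dots,e_r)$ of $E$, and write $i\Theta_{E,h,x_0}=i\sum c_{jk\lambda\mu}\,dz_j\wedge d\overline z_k\otimes e^*_\lambda\otimes e_\mu$ as in the excerpt. Then the dual frame $(e^*_1,\dots,e^*_r)$ is $h^*$-orthonormal, and since $\Theta_{E^*,h^*}=-{}^t\Theta_{E,h}$ one has $i\Theta_{E^*,h^*,x_0}=-i\sum c_{jk\mu\lambda}\,dz_j\wedge d\overline z_k\otimes e^*_\mu\otimes e_\lambda$, i.e. in components $c^{E^*}_{jk\lambda\mu}=-c_{jk\mu\lambda}$. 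These two inputs, together with the multi-index identity $\sum_{j\in\complement J}+\sum_{j\in\complement K}-\sum_{1\leq j\leq n}=-\bigl(\sum_{j\in J}+\sum_{j\in K}-\sum_{1\leq j\leq n}\bigr)$, are the algebraic heart of the argument.

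For the equivalence $A^{p,q}_{E,h,\omega}\geq0\iff A^{n-q,n-p}_{E,h,\omega}\leq0$, I would introduce the $E$-valued Hodge-star type pointwise isometry $T\colon\Lambda^{p,q}T^*_{X,x_0}\otimes E_{x_0}\to\Lambda^{n-q,n-p}T^*_{X,x_0}\otimes E_{x_0}$, sending a standard basis vector $dz_J\wedge d\overline z_K\otimes e_\lambda$ to $\pm\,dz_{\complement K}\wedge d\overline z_{\complement J}\otimes e_\lambda$ with the appropriate $\varepsilon$-signs, so that $|Tu|_{h,\omega}=|u|_{h,\omega}$. Writing $u=\sum u_{J,K,\lambda}\,dz_J\wedge d\overline z_K\otimes e_\lambda$ and expanding $\langle A^{n-q,n-p}_{E,h,\omega}Tu,Tu\rangle_\omega$ by Lemma \ref{calculate (p,q)-forms}, the diagonal coefficient becomes $\sum_{j\in\complement K}+\sum_{j\in\complement J}-\sum_{1\leq j\leq n}$, which by the displayed identity is $-\bigl(\sum_{j\in J}+\sum_{j\in K}-\sum_{1\leq j\leq n}\bigr)$; the two off-diagonal sums get interchanged (variation of $K$ corresponds to variation of the holomorphic index $\complement K$ of $Tu$, and vice versa) and, once the $\varepsilon$-bookkeeping is done, also pick up exactly a minus sign. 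Hence $\langle A^{n-q,n-p}_{E,h,\omega}Tu,Tu\rangle_\omega=-\langle A^{p,q}_{E,h,\omega}u,u\rangle_\omega$ for every $u$, which gives the claimed equivalence (and the same statement with all inequalities reversed).

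For $A^{n-p,n-q}_{E^*,h^*,\omega}\geq0\iff A^{p,q}_{E,h,\omega}\geq0$ I would instead compose $T$ with the conjugate-linear metric duality $E_{x_0}\to E^*_{x_0}$, $e_\lambda\mapsto e^*_\lambda$, obtaining a pointwise isometry $S\colon\Lambda^{p,q}T^*_{X,x_0}\otimes E_{x_0}\to\Lambda^{n-p,n-q}T^*_{X,x_0}\otimes E^*_{x_0}$ given on basis vectors by $dz_J\wedge d\overline z_K\otimes e_\lambda\mapsto\pm\,dz_{\complement J}\wedge d\overline z_{\complement K}\otimes e^*_\lambda$ (this is the Serre-type partner, so the holomorphic and antiholomorphic index positions are complemented but not swapped). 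Expanding $\langle A^{n-p,n-q}_{E^*,h^*,\omega}Su,Su\rangle_\omega$ by Lemma \ref{calculate (p,q)-forms}, the diagonal coefficient again flips to $-\bigl(\sum_{j\in J}+\sum_{j\in K}-\sum_{1\leq j\leq n}\bigr)$, but now the curvature entries themselves flip, $c^{E^*}_{jj\lambda\mu}=-c_{jj\mu\lambda}$, and after a relabelling $\lambda\leftrightarrow\mu$ (legitimate since the coefficients of $Su$ are $\pm\overline u_{J,K,\lambda}$) the two minus signs cancel and the original diagonal term of $\langle A^{p,q}_{E,h,\omega}u,u\rangle_\omega$ is recovered; the off-diagonal terms behave identically. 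Thus $\langle A^{n-p,n-q}_{E^*,h^*,\omega}Su,Su\rangle_\omega=\langle A^{p,q}_{E,h,\omega}u,u\rangle_\omega$ for all $u$, which proves this equivalence and its reversed-inequality counterpart. Chaining the two equivalences then yields all three terms of the statement.

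The only genuinely delicate point will be the sign bookkeeping for the two off-diagonal sums under complementation of multi-indices: one must check that, for matched pairs of terms, the factors $\varepsilon(j,K)\varepsilon(k,M)$ and $\varepsilon(k,J)\varepsilon(j,L)$ in the formula of Lemma \ref{calculate (p,q)-forms} are transported to the corresponding factors attached to $\complement K,\complement M,\complement J,\complement L$ with a sign that is \emph{uniform over all matched terms}, so that it factors out of the sum and does not interfere with (semi-)positivity. Concretely, the index condition $K\setminus j=M\setminus k$ is equivalent to $\complement M\setminus j=\complement K\setminus k$, and one needs a combinatorial identity comparing $\varepsilon(s,I)$ with $\varepsilon(s,\complement I)$; this is purely formal. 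Everything else — the isometry property of $T$ and $S$, the dual-curvature relation $c^{E^*}_{jk\lambda\mu}=-c_{jk\mu\lambda}$, and the trace-coefficient identity — is immediate.
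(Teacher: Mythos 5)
Your plan is correct and is essentially the argument behind the cited result: the paper itself gives no proof of Lemma \ref{characterization of A(p,q)} (it is quoted from [Wat21, Theorems 2.3 and 2.5]), and the proofs there proceed exactly as you propose, by a pointwise expansion via the formula of Lemma \ref{calculate (p,q)-forms} combined with Hodge-star/Serre-type isometries on complemented multi-indices and the relation $c^{E^*}_{jk\lambda\mu}=-c_{jk\mu\lambda}$. The deferred $\varepsilon$-sign bookkeeping you flag is indeed the only substantive point, and it does close up with a sign uniform over all matched off-diagonal terms once the $\pm$ in $T$ and $S$ is taken to be the signature of the permutation ordering $(J,\complement J)$ and $(K,\complement K)$.
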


By using the second condition (b), we show the following theorem which is already known as $(n,q)$-forms in the case of Nakano semi-positive.

\begin{theorem}\label{L^2-estimate completeness (p,n)-forms}
    Let $(X,\widehat{\omega})$ be a complete \kah manifold, $\omega$ be another \kah metric which is not necessarily complete and $(E,h)$ be a dual Nakano semi-positive vector bundle. 
    Then for any $\overline{\partial}$-closed $f\in L^2_{p,n}(X,E,h,\omega)$ there exists $u\in L^2_{p,n-1}(X,E,h,\omega)$ satisfies $\overline{\partial}u=f$ and 
    \begin{align*}
        \int_X|u|^2_{h,\omega}dV_{\omega}\leq\int_X\langle(A^{p,n}_{E,h,\omega})^{-1}f,f\rangle_{h,\omega}dV_{\omega},
    \end{align*}
    where we assume that the right-hand side is finite.
\end{theorem}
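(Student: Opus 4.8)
\emph{Proof proposal.} The plan is to combine the completion trick with the Bochner--Kodaira--Nakano--H\"ormander method and the monotonicity results of Lemma~\ref{uniformity of (p,n)-forms} and Lemma~\ref{uniformity of (p,q)-forms not dV}: first solve the equation with the sharp estimate on a family of \emph{complete} K\"ahler metrics approximating $\omega$ from above, then extract a weak limit and push the estimate down to $\omega$ by monotone convergence.

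Concretely, for $\delta>0$ put $\omega_\delta:=\omega+\delta\widehat{\omega}$. Since $\omega$ and $\widehat{\omega}$ are K\"ahler, $\omega_\delta$ is K\"ahler, and since $\omega_\delta\geq\delta\widehat{\omega}$ and $\widehat{\omega}$ is complete, $\omega_\delta$ is a complete K\"ahler metric with $\omega_\delta\geq\omega$. By hypothesis $(E,h)$ is dual Nakano semi-positive, so by characterization (b) we have $A^{p,n}_{E,h,\omega}\geq0$; Lemma~\ref{uniformity of (p,n)-forms} then gives $A^{p,n}_{E,h,\omega_\delta}\geq0$ as well. On the complete K\"ahler manifold $(X,\omega_\delta)$ I would run the standard H\"ormander argument for $(p,n)$-forms: for every $v\in\mathcal{D}^{p,n}(X,E)$ one has $\overline{\partial}v=0$ for degree reasons, so the Bochner--Kodaira--Nakano identity gives $\langle A^{p,n}_{E,h,\omega_\delta}v,v\rangle\leq\|\overline{\partial}^*v\|^2$; together with the pointwise Cauchy--Schwarz inequality for the semi-positive form $A^{p,n}_{E,h,\omega_\delta}$ and completeness (density of $\mathcal{D}^{p,n}$ in the graph norm of $\overline{\partial},\overline{\partial}^*$), this produces $u_\delta\in L^2_{p,n-1}(X,E,h,\omega_\delta)$ with $\overline{\partial}u_\delta=f$ and
\[
\int_X|u_\delta|^2_{h,\omega_\delta}dV_{\omega_\delta}\leq\int_X\langle(A^{p,n}_{E,h,\omega_\delta})^{-1}f,f\rangle_{h,\omega_\delta}dV_{\omega_\delta}.
\]
Here the pointwise Cauchy--Schwarz step and the very meaning of $\langle(A^{p,n}_{E,h,\omega_\delta})^{-1}f,f\rangle$ make sense because finiteness of the right-hand side forces $f$ to lie pointwise a.e.\ in the image of $A^{p,n}_{E,h,\omega_\delta}$, and the underlying linear functional $\overline{\partial}^*v\mapsto\langle f,v\rangle$ is well defined because $f$ is orthogonal to the space of $L^2$-harmonic $(p,n)$-forms, which are annihilated by $A^{p,n}_{E,h,\omega_\delta}$ (by the same identity and $A^{p,n}_{E,h,\omega_\delta}\geq0$).

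Now Lemma~\ref{uniformity of (p,n)-forms} applied to the pair $\omega\leq\omega_\delta$ gives $\langle(A^{p,n}_{E,h,\omega_\delta})^{-1}f,f\rangle_{h,\omega_\delta}dV_{\omega_\delta}\leq\langle(A^{p,n}_{E,h,\omega})^{-1}f,f\rangle_{h,\omega}dV_{\omega}$ pointwise, so the right-hand side above is $\leq C:=\int_X\langle(A^{p,n}_{E,h,\omega})^{-1}f,f\rangle_{h,\omega}dV_{\omega}<+\infty$ uniformly in $\delta$. Fix $\delta_j\searrow0$. For $\delta\leq\delta_1$ we have $\omega\leq\omega_\delta\leq\omega_{\delta_1}$, so Lemma~\ref{uniformity of (p,n)-forms} yields $|u_\delta|^2_{h,\omega_{\delta_1}}dV_{\omega_{\delta_1}}\leq|u_\delta|^2_{h,\omega_\delta}dV_{\omega_\delta}$ and hence $\int_X|u_\delta|^2_{h,\omega_{\delta_1}}dV_{\omega_{\delta_1}}\leq C$; thus $(u_{\delta_j})_j$ is bounded in $L^2_{p,n-1}(X,E,h,\omega_{\delta_1})$. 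A diagonal argument over the index $j$ produces a subsequence converging weakly in $L^2_{p,n-1}(X,E,h,\omega_{\delta_j})$ for every $j$ to some $u$; since $\overline{\partial}u_\delta=f$ for all $\delta$ we get $\overline{\partial}u=f$ in the sense of distributions, and weak lower semicontinuity of the norm gives $\int_X|u|^2_{h,\omega_{\delta_j}}dV_{\omega_{\delta_j}}\leq C$ for every $j$. Finally, as $j\to\infty$ the metrics $\omega_{\delta_j}$ decrease pointwise to $\omega$, so by Lemma~\ref{uniformity of (p,n)-forms} the densities $|u|^2_{h,\omega_{\delta_j}}dV_{\omega_{\delta_j}}$ increase pointwise to $|u|^2_{h,\omega}dV_{\omega}$, and the monotone convergence theorem gives $\int_X|u|^2_{h,\omega}dV_{\omega}=\lim_j\int_X|u|^2_{h,\omega_{\delta_j}}dV_{\omega_{\delta_j}}\leq C$, which is the claimed estimate (in particular $u\in L^2_{p,n-1}(X,E,h,\omega)$). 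I expect the main obstacle to be the first step: setting up the H\"ormander existence theorem with the optimal $(A^{p,n})^{-1}$-bound on $(X,\omega_\delta)$ when $A^{p,n}_{E,h,\omega_\delta}$ is only semi-positive, i.e.\ handling the pointwise Cauchy--Schwarz on the degeneracy locus and the orthogonality to harmonic forms; everything afterwards is the routine monotone-limit mechanism already used in the proof of Proposition~\ref{L2-estimate for (p,n)-forms on Stein}.
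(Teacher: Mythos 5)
Your proposal is correct and follows essentially the same route as the paper: approximate $\omega$ by the complete metrics $\omega+\delta\widehat{\omega}$, apply H\"ormander's estimate there, transfer the bound via Lemma~\ref{uniformity of (p,n)-forms}, and pass to a weak limit with a diagonal argument and monotone convergence. The only (harmless) deviation is that you take the bounded sequence in $L^2_{p,n-1}(X,E,h,\omega_{\delta_1})$ using the monotonicity of the full density $|u|^2_{h,\gamma}dV_\gamma$ from Lemma~\ref{uniformity of (p,n)-forms}, whereas the paper fixes the volume form $dV_\omega$ and invokes Lemma~\ref{uniformity of (p,q)-forms not dV} for the pointwise norm alone.
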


Furthermore, from the condition (b), Theorem \ref{(p,n)-condition iff A semi-posi} and \ref{L^2-estimate completeness (p,n)-forms}, we obtain the following characterization of dual Nakano semi-positivity by using $L^2$-estimates.

\begin{theorem}\label{smooth dual Nakano L2-condition}$\mathrm{(cf.\,[Wat22,\,Corollary\,4.5]})$
    Let $(X,\omega)$ be a \kah manifold of dimension $n$ which admits a positive holomorphic Hermitian line bundle and $(E,h)$ be a holomorphic Hermitian vector bundle over $X$. 
    Then $(E,h)$ satisfies the $(p,n)$-$L^2$-estimate condition for all $p\geq1$ if and only if $(E,h)$ is dual Nakano semi-positive.
\end{theorem}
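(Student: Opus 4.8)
The plan is to deduce the statement from Theorem \ref{(p,q)-condition iff A semi-posi}, Theorem \ref{L^2-estimate completeness (p,n)-forms}, and the characterization (b). For the unconditional implication, suppose $(E,h)$ satisfies the $(p,n)$-$L^2$-estimate condition for all $p\geq1$; in particular it satisfies the $(p,n)$-$L^2_\omega$-estimate condition for the given \kah metric $\omega$. Since $q=n\geq1$ and $p+n>n$ holds exactly when $p\geq1$, Theorem \ref{(p,q)-condition iff A semi-posi} gives $A^{p,n}_{E,h,\omega}\geq0$ for every $p\geq1$, and by characterization (b) this is precisely dual Nakano semi-positivity of $(E,h)$. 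This direction uses only that $X$ carries a positive holomorphic Hermitian line bundle, which is already built into Definition \ref{def (p,q)-condition}.

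For the converse, assume in addition that $X$ admits a complete \kah metric $\widehat{\omega}$ and that $(E,h)$ is dual Nakano semi-positive. Fix $p\geq1$ and an arbitrary \kah metric $\widetilde{\omega}$ on $X$; the task is to verify the $(p,n)$-$L^2_{\widetilde{\omega}}$-estimate condition. Let $(A,h_A)$ be a positive holomorphic Hermitian line bundle on $X$ and let $f\in\mathcal{D}^{p,n}(X,E\otimes A)$ with $\overline{\partial}f=0$, assuming the quantity $\int_X\langle[i\Theta_{A,h_A}\otimes\mathrm{id}_E,\Lambda_{\widetilde{\omega}}]^{-1}f,f\rangle_{h\otimes h_A,\widetilde{\omega}}\,dV_{\widetilde{\omega}}$ is finite. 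From $i\Theta_{E\otimes A,h\otimes h_A}=i\Theta_{E,h}\otimes\mathrm{id}_A+\mathrm{id}_E\otimes i\Theta_{A,h_A}$ and linearity of $[\,\cdot\,,\Lambda_{\widetilde{\omega}}]$ one gets
\begin{align*}
A^{p,n}_{E\otimes A,h\otimes h_A,\widetilde{\omega}}=A^{p,n}_{E,h,\widetilde{\omega}}\otimes\mathrm{id}_A+[i\Theta_{A,h_A}\otimes\mathrm{id}_E,\Lambda_{\widetilde{\omega}}].
\end{align*}
Here $A^{p,n}_{E,h,\widetilde{\omega}}\geq0$ by characterization (b) (dual Nakano semi-positivity being independent of the metric), while Lemma \ref{calculate (p,q)-forms} — evaluated at a point where $\widetilde{\omega}$ is standard and $i\Theta_{A,h_A}$ is diagonal, and using that the anti-holomorphic multi-index of a $(p,n)$-form is the full set $\{1,\dots,n\}$, so that the trace term and all cross terms drop out — shows that $[i\Theta_{A,h_A}\otimes\mathrm{id}_E,\Lambda_{\widetilde{\omega}}]$ is positive definite on $\Lambda^{p,n}T^*_X\otimes E\otimes A$ for $p\geq1$. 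Hence $A^{p,n}_{E\otimes A,h\otimes h_A,\widetilde{\omega}}>0$, so $(E\otimes A,h\otimes h_A)$ is dual Nakano semi-positive by (b), and the operator inequality $(P+Q)^{-1}\leq Q^{-1}$ for $P\geq0$, $Q>0$ gives
\begin{align*}
(A^{p,n}_{E\otimes A,h\otimes h_A,\widetilde{\omega}})^{-1}\leq[i\Theta_{A,h_A}\otimes\mathrm{id}_E,\Lambda_{\widetilde{\omega}}]^{-1}.
\end{align*}

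Now apply Theorem \ref{L^2-estimate completeness (p,n)-forms} to the complete \kah manifold $(X,\widehat{\omega})$, the \kah metric $\widetilde{\omega}$, and the dual Nakano semi-positive bundle $(E\otimes A,h\otimes h_A)$: the form $f$, being smooth with compact support, lies in $L^2_{p,n}(X,E\otimes A,h\otimes h_A,\widetilde{\omega})$, and $\int_X\langle(A^{p,n}_{E\otimes A,h\otimes h_A,\widetilde{\omega}})^{-1}f,f\rangle\,dV_{\widetilde{\omega}}$ is finite by the inequality above (this is also where strict positivity coming from the line bundle makes the inverse meaningful). Thus there is $u\in L^2_{p,n-1}(X,E\otimes A,h\otimes h_A,\widetilde{\omega})$ with $\overline{\partial}u=f$ and
\begin{align*}
\int_X|u|^2_{h\otimes h_A,\widetilde{\omega}}\,dV_{\widetilde{\omega}}&\leq\int_X\langle(A^{p,n}_{E\otimes A,h\otimes h_A,\widetilde{\omega}})^{-1}f,f\rangle_{h\otimes h_A,\widetilde{\omega}}\,dV_{\widetilde{\omega}}\\
&\leq\int_X\langle[i\Theta_{A,h_A}\otimes\mathrm{id}_E,\Lambda_{\widetilde{\omega}}]^{-1}f,f\rangle_{h\otimes h_A,\widetilde{\omega}}\,dV_{\widetilde{\omega}}.
\end{align*}
This is exactly the $(p,n)$-$L^2_{\widetilde{\omega}}$-estimate condition; since $\widetilde{\omega}$ was arbitrary, $(E,h)$ satisfies the $(p,n)$-$L^2$-estimate condition for every $p\geq1$, and combined with the first part this yields the asserted equivalence.

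The only genuinely delicate points I anticipate are the two pointwise linear-algebra facts used in the middle step: that the curvature operator of $E\otimes A$ splits additively (so the inverse comparison is legitimate), and that $[i\Theta_{A,h_A}\otimes\mathrm{id}_E,\Lambda_{\widetilde{\omega}}]$ is strictly positive on $(p,n)$-forms for $p\geq1$. Both reduce to the pointwise formula of Lemma \ref{calculate (p,q)-forms} once one notes that the anti-holomorphic degree of a $(p,n)$-form is $n$; everything else is a direct assembly of the results already proved in this section.
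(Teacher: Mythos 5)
Your proposal is correct and follows exactly the route the paper indicates (which it only sketches, deferring details to [Wat21, Corollary 4.5]): the forward implication via Theorem \ref{(p,q)-condition iff A semi-posi} and characterization (b), and the converse by applying Theorem \ref{L^2-estimate completeness (p,n)-forms} to $(E\otimes A,h\otimes h_A)$ together with the curvature splitting and the operator-monotonicity inequality. The two pointwise facts you flag (additivity of the curvature operator and strict positivity of $[i\Theta_{A,h_A}\otimes\mathrm{id}_E,\Lambda_{\widetilde\omega}]$ on $(p,n)$-forms for $p\geq1$) are handled correctly via Lemma \ref{calculate (p,q)-forms}.
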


\vspace{3mm}

$\textit{Proof of Theorem \ref{L^2-estimate completeness (p,n)-forms}}.$
    For any two Hermitian metrics $\gamma_1,\gamma_2$, we define the Hilbert space $L^2_{p,q}(X,E,h,\gamma_1,\gamma_2)$ of $(p,q)$-forms $g$ on $X$ with measurable coefficients such that 
    \begin{align*}
        \int_X|g|^2_{h,\gamma_1}dV_{\gamma_2}<+\infty.
    \end{align*}
    Here there exists a positive smooth function $\tilde{\gamma}\in\mathcal{E}(X,\mathbb{R}_{>0})$ such that $dV_{\gamma_2}=\tilde{\gamma}dV_{\gamma_1}$ then we have that $L^2_{p,q}(X,E,h,\gamma_1,\gamma_2)=L^2_{p,q}(X,E,\tilde{\gamma}h,\gamma_1)$.

    For every $\varepsilon>0$, the \kah metric $\omega_\varepsilon=\omega+\varepsilon\widehat{\omega}$ is complete. The idea of the proof is to apply the $L^2$-estimates to $\omega_\varepsilon$ and to let $\varepsilon$ tend to zero.
    It follows from Lemma \ref{uniformity of (p,n)-forms} and the equivalence condition of dual Nakano semi-positivity, i.e. $A^{p,n}_{E,h,\omega}\geq0$ for $p\geq1$ that 
    \begin{align*}
        \langle(A^{p,n}_{E,h,\omega_\varepsilon})^{-1}g,g\rangle_{h,\omega_\varepsilon}dV_{\omega_\varepsilon}\leq\langle(A^{p,n}_{E,h,\omega})^{-1}g,g\rangle_{h,\omega}dV_{\omega}
    \end{align*}
    for any $g\in\Lambda^{p,n}T^*_X\otimes E$. Thanks to H\"ormander's $L^2$-estimate, we get the solution $u_\varepsilon\in L^2_{p,n-1}(X,E,h,\omega_\varepsilon)\subset L^2_{p,n-1}(X,E,h,\omega_\varepsilon,\omega)$ of $\overline{\partial}u_\varepsilon=f$ such that 
    \begin{align*}
        \int_X|u|^2_{h,\omega_\varepsilon}dV_\omega\leq\int_X|u|^2_{h,\omega_\varepsilon}dV_{\omega_\varepsilon}\leq\int_X\langle(A^{p,n}_{E,h,\omega_\varepsilon})^{-1}f,f\rangle_{h,\omega_\varepsilon}dV_{\omega_\varepsilon}\leq\int_X\langle(A^{p,n}_{E,h,\omega})^{-1}f,f\rangle_{h,\omega}dV_{\omega},
    \end{align*}
    where $dV_\omega\leq dV_{\omega_\varepsilon}$.

    For fixed integer $j_0\geq1$, $(u_{1/j})_{j\in\mathbb{N}_{\geq j_0}}$ forms a bounded sequence in $L^2_{p,n-1}(X,E,h,\omega_{1/j_0},\omega)$ due to the monotonicity of $|\bullet|^2_{\omega_{1/j}}$, i.e. Lemma \ref{uniformity of (p,q)-forms not dV}.
    Therefore we can obtain a weakly convergent subsequence in $L^2_{p,n-1}(X,E,h,\omega_{1/j_0},\omega)$. By using a diagonal argument, we get a subsequence $(u_{j_k})_{k\in\mathbb{N}}$ of $(u_{1/j})_{j\in\mathrm{N}_{\geq j_0}}$
    converging weakly in $L^2_{p,n-1}(X,E,h,\omega_{1/j_0},\omega)$ for any $j_0$, where $u_{j_k}\in L^2_{p,n-1}(X,E,h,\omega_{1/j_k},\omega)\subset L^2_{p,n-1}(X,E,h,\omega_{1/j_0},\omega)$. 
    We denote by $u$ the weak limit of $(u_{j_k})_{k\in\mathbb{N}}$. Then $u$ satisfies $\overline{\partial}u=f$ and 
    \begin{align*}
        \int_X|u|^2_{h,\omega_{1/j_k}}dV_\omega\leq\int_X\langle(A^{p,n}_{E,h,\omega})^{-1}f,f\rangle_{h,\omega}dV_{\omega},
    \end{align*}
    for each $k\in\mathbb{N}$. Taking weak limit $k\to+\infty$ and using the monotone convergence theorem, we have the following estimate
    \begin{align*}
        \int_X|u|^2_{h,\omega}dV_\omega\leq\int_X\langle(A^{p,n}_{E,h,\omega})^{-1}f,f\rangle_{h,\omega}dV_{\omega}<+\infty,
    \end{align*}
    i.e. $u\in L^2_{p,n-1}(X,E,h,\omega)$.\qed

    \vspace{3mm}

Finally, we get the following proposition by applying and modifying Theorem \ref{smooth dual Nakano L2-condition}.

\begin{proposition}\label{characterization of smooth dual Nakano}
    Let $h$ be a smooth Hermitian metric on $E$. We consider the following conditions:
    \begin{itemize}
        \item [(1)] $h$ is dual Nakano semi-positive.
        \item [(2)] For any Stein coordinate $S$ such that $E|_S$ is trivial on $S$, any K\"ahler metric $\omega_S$ on $S$, any smooth strictly plurisubharmonic function $\psi$ on $S$, any integer $p\in\{1,\cdots,n\}$ 
        and any $\overline{\partial}$-closed $f\in L^2_{p,n}(S,E,he^{-\psi},\omega_S)$, there exists $u\in L^2_{p,n-1}(S,E,he^{-\psi},\omega_S)$ satisfying $\overline{\partial}u=f$ and 
        \begin{align*}
            \int_S|u|^2_{h,\omega_S}e^{-\psi}dV_{\omega_S}\leq\int_S\langle B^{-1}_{\psi,\omega_S}f,f\rangle_{h,\omega_S}e^{-\psi}dV_{\omega_S},
        \end{align*}
        provided the right-hand side is finite, where $B_{\psi,\omega_S}=[\idd\psi\otimes\mathrm{id}_E,\Lambda_{\omega_S}]$.
        \item [(3)] $(E,h)$ satisfies the $(p,n)$-$L^2$-estimate condition for all $p\geq1$.
    \end{itemize}
    Then two conditions $(1)$ and $(2)$ are equivalent. If $X$ admits a complete \kah metric $\omega$ and a positive holomorphic line bundle on $X$, the above three conditions are equivalent.
\end{proposition}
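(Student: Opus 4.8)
The plan is to prove $(1)\Leftrightarrow(2)$ with no global hypothesis on $X$, and then to bring in $(3)$, under the extra assumptions, by quoting Theorem~\ref{smooth dual Nakano L2-condition}. (Note that $(2)\Rightarrow(1)$ cannot simply be routed through Theorem~\ref{smooth dual Nakano L2-condition}, since $(1)\Leftrightarrow(2)$ is claimed without assuming a positive line bundle or a complete \kah metric on $X$, so a direct argument is needed there.)

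For $(1)\Rightarrow(2)$, fix a Stein coordinate $S$ with $E|_S$ trivial, a \kah metric $\omega_S$ on $S$, a smooth strictly plurisubharmonic $\psi$ on $S$, and $p\in\{1,\ldots,n\}$. First I would pass to the twisted bundle $(E,he^{-\psi})$. Since curvature is additive under tensoring by a line bundle, $A^{p,n}_{E,he^{-\psi},\omega_S}=A^{p,n}_{E,h,\omega_S}+B_{\psi,\omega_S}$ with $B_{\psi,\omega_S}=[\idd\psi\otimes\mathrm{id}_E,\Lambda_{\omega_S}]$, and $B_{\psi,\omega_S}$ has the form $T\otimes\mathrm{id}_E$ with $T=[\idd\psi,\Lambda_{\omega_S}]$ acting on $\Lambda^{p,n}T^*_X$. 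A pointwise diagonalization of $\idd\psi$ against $\omega_S$ shows that strict plurisubharmonicity forces $B_{\psi,\omega_S}>0$ on $\Lambda^{p,n}T^*_X\otimes E$ whenever $p\geq1$ (this is precisely where $p\geq1$ enters); combined with $A^{p,n}_{E,h,\omega_S}\geq0$, which is hypothesis $(1)$ read through the characterization $(b)$, this gives $A^{p,n}_{E,he^{-\psi},\omega_S}\geq B_{\psi,\omega_S}>0$ for all $p\geq1$, hence $(A^{p,n}_{E,he^{-\psi},\omega_S})^{-1}\leq B_{\psi,\omega_S}^{-1}$ and, again by $(b)$, $(E,he^{-\psi})$ is dual Nakano semi-positive. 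As $S$ is Stein it carries a complete \kah metric $\widehat{\omega}_S$, so Theorem~\ref{L^2-estimate completeness (p,n)-forms} applied to $(S,\widehat{\omega}_S,\omega_S,(E,he^{-\psi}))$ yields, for every $\overline{\partial}$-closed $f\in L^2_{p,n}(S,E,he^{-\psi},\omega_S)$ with finite right-hand side, a solution $u$ of $\overline{\partial}u=f$ with $\int_S|u|^2_{he^{-\psi},\omega_S}dV_{\omega_S}\leq\int_S\langle(A^{p,n}_{E,he^{-\psi},\omega_S})^{-1}f,f\rangle_{he^{-\psi},\omega_S}dV_{\omega_S}\leq\int_S\langle B_{\psi,\omega_S}^{-1}f,f\rangle_{he^{-\psi},\omega_S}dV_{\omega_S}$. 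Since $T\otimes\mathrm{id}_E$ commutes with rescaling the metric on $E$, one has $|g|^2_{he^{-\psi},\omega_S}=e^{-\psi}|g|^2_{h,\omega_S}$ and $\langle B_{\psi,\omega_S}^{-1}f,f\rangle_{he^{-\psi},\omega_S}=e^{-\psi}\langle B_{\psi,\omega_S}^{-1}f,f\rangle_{h,\omega_S}$, and after this rewriting the estimate is exactly condition $(2)$.

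The reverse implication $(2)\Rightarrow(1)$ is the delicate one, and I expect it to be the main obstacle. Since dual Nakano semi-positivity is a pointwise condition, by $(b)$ it suffices to show that $A^{p,n}_{E,h,\omega_S}\geq0$ on $S$ for all $p\geq1$, for every Stein coordinate ball $S$ with $E|_S$ trivial and some choice of \kah metric $\omega_S$. I would argue by contradiction: if at some $x_0\in S$ and some $p$ there were $v_0\in\Lambda^{p,n}T^*_{X,x_0}\otimes E_{x_0}$ with $\langle A^{p,n}_{E,h,\omega_S}v_0,v_0\rangle<0$, I would shrink $S$ to a small coordinate ball about $x_0$, choose a strictly plurisubharmonic weight $\psi$ (say $\psi(z)=|z|^2$), and build compactly supported $(p,n)$-forms $f_\delta$ concentrating at $x_0$ in the direction of $v_0$, which are automatically $\overline{\partial}$-closed since they are of top $d\overline{z}$-degree. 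Feeding $f_\delta$ into the estimate of $(2)$ bounds $\|u_\delta\|$ by an integral of $\langle B_{\psi,\omega_S}^{-1}f_\delta,f_\delta\rangle e^{-\psi}$, whereas the Bochner--Kodaira--Nakano identity on $S$ for $(E,he^{-\psi})$ forces any solution of $\overline{\partial}u=f_\delta$ to have norm bounded below, in the limit $\delta\to0$, in terms of $\langle(A^{p,n}_{E,h,\omega_S}(x_0)+B_{\psi,\omega_S}(x_0))^{-1}v_0,v_0\rangle$; since $A^{p,n}_{E,h,\omega_S}(x_0)$ is strictly negative along $v_0$, this quantity strictly exceeds $\langle (B_{\psi,\omega_S}(x_0))^{-1}v_0,v_0\rangle$, a contradiction. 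This is the scheme of \cite{DNWZ20} and of [Wat21] behind Theorem~\ref{(p,q)-condition iff A semi-posi}, adapted so that the auxiliary positive line bundle is replaced by the strictly plurisubharmonic weight $\psi$; the hard part is the construction of the concentrating test forms and the careful limiting comparison of the two norm bounds.

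For the last assertion, under the additional hypotheses that $X$ carries a complete \kah metric and a positive holomorphic line bundle, Theorem~\ref{smooth dual Nakano L2-condition} gives $(1)\Leftrightarrow(3)$ directly; combined with $(1)\Leftrightarrow(2)$ above, all three conditions are equivalent, which completes the plan.
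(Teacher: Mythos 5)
Your treatment of $(1)\Rightarrow(2)$ and of the equivalence with $(3)$ matches the paper: decompose $A^{p,n}_{E,he^{-\psi},\omega_S}=A^{p,n}_{E,h,\omega_S}+B_{\psi,\omega_S}\geq B_{\psi,\omega_S}>0$, apply Theorem \ref{L^2-estimate completeness (p,n)-forms} on the Stein set $S$ (which carries a complete \kah metric), and quote Theorem \ref{smooth dual Nakano L2-condition} for $(1)\Leftrightarrow(3)$ under the global hypotheses.

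The problem is $(2)\Rightarrow(1)$, which you correctly flag as the crux but then only sketch. You propose to rebuild the concentration-of-test-forms argument of [DNWZ20]/[Wat21] from scratch, with the auxiliary line bundle replaced by the weight $\psi$, and you explicitly defer ``the construction of the concentrating test forms and the careful limiting comparison of the two norm bounds.'' That is precisely the hard analytic content of Theorem \ref{(p,q)-condition iff A semi-posi}, so as written the proof is not complete. Moreover the detour is unnecessary, and your parenthetical worry that the absence of global hypotheses on $X$ blocks the use of the earlier theorems is unfounded: dual Nakano semi-positivity is a pointwise condition, so it suffices to verify $A^{p,n}_{E,h,\omega_S}\geq0$ on each small Stein coordinate $S$. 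Such an $S$ \emph{does} admit a positive holomorphic Hermitian line bundle (the trivial bundle with weight $e^{-|z|^2}$), and every positive holomorphic Hermitian line bundle on a small $S$ is of the form (trivial bundle, $e^{-\psi}$) with $\psi$ smooth strictly plurisubharmonic, so that $[i\Theta_{A,h_A}\otimes\mathrm{id}_E,\Lambda_{\omega_S}]=B_{\psi,\omega_S}$ and $he^{-\psi}=h\otimes h_A$. Hence condition $(2)$ (which in particular covers all compactly supported $\overline{\partial}$-closed $f$, for which the right-hand side is automatically finite) says exactly that $(E,h)$ satisfies the $(p,n)$-$L^2_{\omega_S}$-estimate condition of Definition \ref{def (p,q)-condition} on $S$. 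The forward direction of Theorem \ref{(p,q)-condition iff A semi-posi} (which needs no completeness) then gives $A^{p,n}_{E,h,\omega_S}\geq0$ for all $p\geq1$, i.e.\ dual Nakano semi-positivity on $S$ by the characterization $(b)$, and locality finishes the argument. This two-line reduction is what the paper does; you should either adopt it or actually carry out the concentration argument you gesture at.
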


\begin{proof}
    First, we consider $(1) \Longrightarrow (2)$. We have that $i\Theta_{E,he^{-\psi}}=i\Theta_{E,h}+\idd\psi\otimes\mathrm{id}_E$ is dual Nakano positive on $S$ and
    \begin{align*}
        A^{p,n}_{E,he^{-\psi},\omega_S}=[i\Theta_{E,h},\Lambda_{\omega_S}]+[\idd\psi\otimes\mathrm{id}_E,\Lambda_{\omega_S}]=A^{p,n}_{E,h,\omega_S}+B_{\psi,\omega_S}\geq B_{\psi,\omega_S}>0~\,\mathrm{on }~S.
    \end{align*}
    Since Theorem \ref{L^2-estimate completeness (p,n)-forms}, for any $p\geq1$ and for any $\overline{\partial}$-closed $f\in L^2_{p,n}(S,E,he^{-\psi},\omega_S)$ there exists $u\in L^2_{p,n-1}(S,E,he^{-\psi},\omega_S)$ such that $\overline{\partial}u=f$ and 
    \begin{align*}
        \int_S|u|^2_{h,\omega}e^{-\psi}dV_{\omega_S}&\leq\int_S\langle(A^{p,n}_{E,he^{-\psi},\omega_S})^{-1}f,f\rangle_{h,\omega_S}e^{-\psi}dV_{\omega_S}\\
        &\leq\int_S\langle B^{-1}_{\psi,\omega_S}f,f\rangle_{h,\omega_S}e^{-\psi}dV_{\omega_S}.
    \end{align*}

    Next, we consider $(2) \Longrightarrow (1)$. From the condition $(2)$, for any very small Stein coordinate $S$, $(E,h)$ satisfies the $(p,n)$-$L^2_{\omega_S}$-estimate condition on $S$. Since Theorem \ref{smooth dual Nakano L2-condition}, we have that $A^{p,n}_{E,h,\omega_S}\geq0$ which is equivalent to dual Nakano semi-positive on $S$. 
    Since dual Nakano semi-positive is a local property, we get the condition $(1)$.

    Finally, we assume that $X$ admits a complete \kah metric $\omega$ and a positive holomorphic line bundle on $X$. From Theorem \ref{smooth dual Nakano L2-condition} and \ref{L^2-estimate completeness (p,n)-forms}, we have that $(3) \iff (1)$.
\end{proof}

\section{Singular Hermitian metrics and characterization of dual Nakano positivity}

In this section, we consider the case where a Hermitian metric of a holomorphic vector bundle has singularities.
First, for holomorphic vector bundles, we introduce the definition of singular Hermitian metrics $h$ and 
 the multiplier submodule sheaf $\mathscr{E}(h)$ of $\mathscr{O}(E)$ with respect to $h$ that is analogous to the multiplier ideal sheaf.

\begin{definition}$(\mathrm{cf.~[BP08,~Section~3],~[Rau15,~Definition~1.1]~and~[PT18,~Definition}$ $\mathrm{2.2.1]})$
    We say that $h$ is a $\it{singular~Hermitian~metric}$ on $E$ if $h$ is a measurable map from the base manifold $X$ to the space of non-negative Hermitian forms on the fibers satisfying $0<\mathrm{det}\,h<+\infty$ almost everywhere.
\end{definition}

\begin{definition}$(\mathrm{cf.\,[deC98,\,Definition\,2.3.1]})$
    Let $h$ be a singular Hermitian metric on $E$. We define the ideal subsheaf $\mathscr{E}(h)$ of germs of local holomorphic sections of $E$ as follows:
    \begin{align*}
        \mathscr{E}(h)_x:=\{s_x\in\mathscr{O}(E)_x\mid|s_x|^2_h~ \mathrm{is ~locally ~integrable ~around~} x\}.
    \end{align*}
\end{definition}

Moreover, we introduce the definitions of positivity and negativity, such as Griffiths and Nakano, for singular Hermitian metrics.

\begin{definition}\label{sing Griffiths def}$(\mathrm{cf.~[BP08,~Definition~3.1],~[Rau15,~Definition~1.2]~and~[PT18,~Def}$- 
    $\mathrm{inition~2.2.2]})$
    We say that a singular Hermitian metric $h$ is 
    \begin{itemize}
        \item [(1)] $\textit{Griffiths semi-negative}$ if $|u|_h$ is plurisubharmonic for any local holomorphic section $u\in\mathscr{O}(E)$ of $E$.
        \item [(2)] $\textit{Griffiths semi-positive}$ if the dual metric $h^*$ on $E^*$ is Griffiths semi-negative.
    \end{itemize}
\end{definition}

Let $h$ be a smooth Hermitian metric on $E$ and $u=(u_1,\cdots,u_n)$ be an $n$-tuple of holomorphic sections of $E$. We define $T^h_u$, an $(n-1,n-1)$-form through
\begin{align*}
    T^h_u=\sum^n_{j,k=1}(u_j,u_k)_h\widehat{dz_j\wedge d\overline{z}_k}
\end{align*}
where $(z_1,\cdots,z_n)$ are local coordinates on $X$, and $\widehat{dz_j\wedge d\overline{z}_k}$ denotes the wedge product of all $dz_i$ and $d\overline{z}_i$ expect $dz_j$ and $d\overline{z}_k$, 
multiplied by a constant of absolute value $1$, chosen so that $T_u$ is a positive form. Then a short computation yields that $(E,h)$ is Nakano semi-negative if and only if $T^h_u$ is plurisubharmonic in the sense that $\idd T^h_u\geq0$ (see \cite{Ber09},\,\cite{Rau15}).
In the case of $u_j=u_k=u$, $(E,h)$ is Griffiths semi-negative.

From the above, we introduce the definition of Nakano semi-negativity for singular Hermitian metrics.

\begin{definition}\label{def Nakano semi-negative as Raufi}$\mathrm{(cf.\,[Rau15,\,Section\,1]})$
    We say that a singular Hermitian metric $h$ on $E$ is Nakano semi-negative if the $(n-1,n-1)$-form $T^h_u$ is plurisubharmonic for any $n$-tuple of holomorphic sections $u=(u_1,\cdots,u_n)$.
\end{definition}

Here, since the dual of a Nakano negative bundle in general is not Nakano positive, we cannot define Nakano semi-positivity for singular Hermitian metrics as in the case of Griffiths semi-positive, 
but we naturally define dual Nakano semi-positivity for singular Hermitian metrics as follows.

\begin{definition}\label{def dual Nakano semi-posi sing}
    We say that a singular Hermitian metric $h$ on $E$ is dual Nakano semi-positive if the dual metric $h^*$ on $E^*$ is Nakano semi-negative.
\end{definition}

For Nakano semi-positivity of singular Hermitian metrics, we already know one definition in \cite{Ina21b}, which is based on the optimal $L^2$-estimate condition in \cite{HI20}, \cite{DNWZ20} and is equivalent to the usual definition for the smooth case.

\begin{definition}\label{def Nakano semi-posi sing}$\mathrm{(cf.\,[Ina22,\,Definition\,1.1]})$
    Assume that $h$ is a Griffiths semi-positive singular Hermitian metric. We say that $h$ is 
    \textit{Nakano semi-positive} if for any Stein coordinate $S$ such that $E|_S$ is trivial, any \kah metric $\omega_S$ on $S$,
    any smooth strictly plurisubharmonic function $\psi$ on $S$, any positive integer $q\in\{1,\cdots,n\}$ and any $\overline{\partial}$-closed $f\in L^2_{n,q}(S,E,he^{-\psi},\omega_S)$ 
    there exists $u\in L^2_{n,q-1}(S,E,he^{-\psi},\omega_S)$ satisfying $\overline{\partial}u=f$ and 
    \begin{align*}
        \int_S|u|^2_{h,\omega_S}e^{-\psi}dV_{\omega_S}\leq\int_S\langle B^{-1}_{\psi,\omega_S}f,f\rangle_{h,\omega_S}e^{-\psi}dV_{\omega_S},
    \end{align*}
    where $B_{\psi,\omega_S}=[\idd\psi\otimes\mathrm{id}_E,\Lambda_{\omega_S}]$. Here we assume that the right-hand side is finite.
\end{definition}

In \cite{Nad89}, Nadel proved that $\mathscr{I}(h)$ is coherent by using the H\"ormander $L^2$-estimate. 
After that, as holomorphic vector bundles case, Hosono and Inayama proved that $\mathscr{E}(h)$ is coherent if $h$ is Nakano semi-positive in the sense of singular as in Definition \ref{def Nakano semi-posi sing} in \cite{HI20} and \cite{Ina21b}.

For singular Hermitian metrics, we cannot always define the curvature currents with measure coefficients (see \cite{Rau15}). 
However, the above Definition \ref{def Nakano semi-posi sing} can be defined by not using the curvature currents of a singular Hermitian metric directly.
Therefore, by using these definitions, the following definition of strictly positivity for Griffiths and Nakano is known.

\begin{definition}\label{sing strictly positive of Grif and Nakano}$\mathrm{(cf.\,[Ina20,\,Definition\,2.6],\,[Ina22,\,Definition\,2.16])}$
    Let $(X,\omega_X)$ be a \kah manifold and $h$ be a singular Hermitian metric on $E$.
    \begin{itemize}
        \item We say that $h$ is $\textit{strictly Griffiths}~ \delta_{\omega_X}$-$\textit{positive}$ if for any open subset $U$ and any \kah potential $\varphi$ of $\omega_X$ on $U$, $he^{\delta\varphi}$ is Griffiths semi-positive on $U$.
        \item We say that $h$ is $\textit{strictly Nakano}~ \delta_{\omega_X}$-$\textit{positive}$ if for any open subset $U$ and any \kah potential $\varphi$ of $\omega_X$ on $U$, $he^{\delta\varphi}$ is Nakano semi-positive on $U$ in the sense of Definition \ref{def Nakano semi-posi sing}.
    \end{itemize}
\end{definition}

This definition for Nakano gives the following $L^2$-estimate theorem and establishes the singular-type Nakano vanishing theorem (Theorem \ref{sing type Nakano V-thm}) by using this $L^2$-estimate theorem.

\begin{theorem}\label{L2-estimate for Nakano positive}$\mathrm{(cf.\,[Ina22,\,Theorem\,1.4])}$
    Let $(X,\omega_X)$ be a projective manifold and a \kah metric on $X$ and $q$ be a positive integer.
    We assume that $(E,h)$ is strictly Nakano $\delta_{\omega_X}$-positive in the sense of Definition \ref{sing strictly positive of Grif and Nakano} on $X$. Then for any $\overline{\partial}$-closed $f\in L^2_{n,q}(X,E,h,\omega_X)$
    there exists $u\in L^2_{n,q-1}(X,E,h,\omega_X)$ satisfies $\overline{\partial}u=f$ and 
    \begin{align*}
        \int_X|u|^2_{h,\omega_X}dV_{\omega_X}\leq\frac{1}{\delta q}\int_X|f|^2_{h,\omega_X}dV_{\omega_X}.
    \end{align*}
\end{theorem}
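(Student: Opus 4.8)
The plan is to follow the same scheme as the proof of Theorem \ref{Ext Nadel V-thm}/Proposition \ref{L2-estimate for (p,n)-forms on Stein}, but now running the argument on the vector bundle side. Since $X$ is projective, by Serre's GAGA there is a hypersurface $H\subset X$ with $X\setminus H$ Stein and $E$ trivial over $X\setminus H$; moreover $\omega_X$ extends as a \kah metric and $X\setminus H$ carries a complete \kah metric $\widehat{\omega}$ (e.g. of the form $\omega_X+\idd(-\log|\sigma_H|^2)$-type corrections). The first step is to reduce the problem to a Stein coordinate patch where the hypothesis ``strictly Nakano $\delta_{\omega_X}$-positive'' can be unwound: on an open $U$ carrying a \kah potential $\varphi$ of $\omega_X$, the metric $he^{\delta\varphi}$ is Nakano semi-positive in the sense of Definition \ref{def Nakano semi-posi sing}, which is exactly an $L^2$-solvability statement for $(n,q)$-forms with the twisted curvature operator $B_{\delta\varphi,\omega_X}=[\idd(\delta\varphi)\otimes\mathrm{id}_E,\Lambda_{\omega_X}]=\delta[\omega_X,\Lambda_{\omega_X}]$ on the right-hand side.

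Second, I would compute that twisted curvature operator: on $(n,q)$-forms one has $[\omega_X,\Lambda_{\omega_X}]=q\,\mathrm{id}$, so $B_{\delta\varphi,\omega_X}=\delta q\,\mathrm{id}$ and its inverse contributes the factor $1/(\delta q)$. Thus on each Stein patch $U$ with potential $\varphi$, Definition \ref{def Nakano semi-posi sing} applied to $he^{\delta\varphi}$ with $\psi=\delta\varphi$ gives, for $\overline{\partial}$-closed $f\in L^2_{n,q}(U,E,he^{\delta\varphi}e^{-\delta\varphi},\omega_X)=L^2_{n,q}(U,E,h,\omega_X)$, a solution $u$ of $\overline{\partial}u=f$ with $\int_U|u|^2_{h,\omega_X}dV_{\omega_X}\leq\frac1{\delta q}\int_U|f|^2_{h,\omega_X}dV_{\omega_X}$ — note the weights $e^{-\psi}$ from the metric $he^{-\psi}$ cancel the $e^{\delta\varphi}$ in the twist, which is the whole point of the definition. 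This gives the desired local $L^2$-estimate with the correct constant.

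Third, I would globalize. Since $X$ is projective one can exhaust $X\setminus H$ (or work directly on $X$ using a covering argument as in \cite{Ina21b}/\cite{Ina22}, Theorem \ref{L2-estimate for Nakano positive} being quoted there); the cleanest route is to invoke Theorem \ref{L^2-estimate completeness (p,n)-forms}-style machinery in the $(n,q)$ setting: pass to $\omega_\varepsilon=\omega_X+\varepsilon\widehat{\omega}$ which is complete, apply H\"ormander's $L^2$-estimate with the twisted smooth approximations $h_\nu e^{\delta\varphi_\nu}$ of the singular metric (the approximation and monotone/weak-limit bookkeeping being identical to Proposition \ref{L2-estimate for (p,n)-forms on Stein}), obtain solutions $u_{\nu,\varepsilon}$ with uniform bound $\frac1{\delta q}\int_X|f|^2_{h,\omega_X}dV_{\omega_X}$, and take weak limits first in $\varepsilon\to0$, then in the regularization parameter $\nu\to\infty$, using Lemma \ref{uniformity of (p,q)-forms not dV} for the monotonicity of norms and the monotone convergence theorem to recover the bound against $h$ and $\omega_X$. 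Finally Lemma \ref{Ext d-equation for hypersurface} lets one extend the solution across $H$ by zero, yielding $u\in L^2_{n,q-1}(X,E,h,\omega_X)$ with $\overline{\partial}u=f$ and the stated estimate.

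The main obstacle is the patching/globalization: Definition \ref{def Nakano semi-posi sing} is purely local (Stein patches with \kah potentials), so one must either run a careful partition-of-unity/\v{C}ech-to-Dolbeault comparison to go from local solvability to a global $L^2$-estimate on the complete \kah manifold $(X\setminus H,\omega_\varepsilon)$, or instead re-derive the estimate by approximating the singular metric globally and applying H\"ormander directly — the technical heart being to ensure the smooth approximants $h_\nu$ are themselves (strictly) Nakano positive on exhausting domains so that the honest smooth Nakano $L^2$-estimate with constant $1/(\delta q)$ applies, and that all the weak limits preserve the bound. This is exactly the bookkeeping carried out in \cite{Ina21b} and \cite{Ina22}, so I would mirror that argument.
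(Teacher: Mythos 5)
The paper does not actually prove this theorem itself: it is quoted from [Ina22, Theorem 1.4]. The natural in-paper benchmark is therefore the proof of the twin statement, Theorem \ref{L2-estimate for dual Nakano positive}, and your first two paragraphs reproduce that argument almost verbatim in the $(n,q)$ setting: $B_{\delta\varphi,\omega_X}=\delta[\omega_X,\Lambda_{\omega_X}]\otimes\mathrm{id}_E=\delta q\,\mathrm{id}$ on $(n,q)$-forms, the choice $\psi=\delta\varphi$ in Definition \ref{def Nakano semi-posi sing} makes the weights $e^{\delta\varphi}$ and $e^{-\psi}$ cancel so that the estimate comes out against $h$ and $\omega_X$ with constant $1/(\delta q)$, the right-hand side is finite because $f\in L^2_{n,q}(X,E,h,\omega_X)$, and Lemma \ref{Ext d-equation for hypersurface} extends the solution by zero across the hypersurface. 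That is the correct and complete proof.

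Your third paragraph, however, manufactures an obstacle that is not there. Definition \ref{def Nakano semi-posi sing} is not a statement about small coordinate patches: it asserts $L^2$-solvability on an arbitrary Stein coordinate $S$ with $E|_S$ trivial, and by GAGA one may choose the Zariski-open set $S=X\setminus H$ so that, in addition, $\omega_X$ is $\partial\overline{\partial}$-exact on $S$ --- this is exactly the first line of the paper's proof of Theorem \ref{L2-estimate for dual Nakano positive}. With a single global potential $\varphi$ on $S$, strict Nakano $\delta_{\omega_X}$-positivity applied to $U=S$ already produces the solution on all of $X\setminus H$ with the right constant; no partition of unity, no complete metrics $\omega_\varepsilon$, no smooth approximants $h_\nu$, and no weak limits are needed. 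That machinery belongs to the proofs of Proposition \ref{dual Nakano semi-posi sing then has L2-estimate condition} and Theorem \ref{L^2-estimate completeness (p,n)-forms}, i.e.\ to \emph{verifying} that a metric satisfies the $L^2$ condition --- here the $L^2$ condition is the hypothesis. So keep your first two paragraphs, replace ``an open $U$ carrying a K\"ahler potential'' by the single Stein set $S=X\setminus H$ on which $\omega_X$ has a global potential, append the extension across $H$, and delete the third paragraph.
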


\begin{theorem}\label{sing type Nakano V-thm}$\mathrm{(cf.\,[Ina22,\,Theorem\,1.5])}$
    Let $(X,\omega_X)$ be a projective manifold and a \kah metric on $X$. We assume that $(E,h)$ is strictly Nakano $\delta_{\omega_X}$-positive in the sense of Definition \ref{sing strictly positive of Grif and Nakano} on $X$.
    Then the $q$-th cohomology group of $X$ with coefficients in the sheaf of germs of holomorphic sections of $K_X\otimes\mathscr{E}(h)$ vanishes for $q>0$:
    \begin{align*}
        H^q(X,K_X\otimes\mathscr{E}(h))=0,
    \end{align*}
    where $\mathscr{E}(h)$ is the sheaf of germs of locally square integrable holomorphic sections of $E$ with respect to $h$.
\end{theorem}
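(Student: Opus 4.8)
The plan is to run the argument of Theorem~\ref{Ext Nadel V-thm} in the vector bundle setting, with the line bundle $L$ and Theorem~\ref{line bdl big L2-estimate for (p,n)-forms} replaced by $(E,h)$ and Theorem~\ref{L2-estimate for Nakano positive}, and with bidegree $(p,n)$ replaced by $(n,q)$. First I would introduce the subsheaf $\mathscr{L}^{n,q}_{E,h}$ of germs of $(n,q)$-forms $u$ with values in $E$ and measurable coefficients such that both $|u|^2_h$ and $|\overline{\partial}u|^2_h$ are locally integrable, and consider the complex
\begin{align*}
    0\longrightarrow\ker\overline{\partial}_0\hookrightarrow\mathscr{L}^{n,0}_{E,h}\overset{\overline{\partial}_0}{\longrightarrow}\mathscr{L}^{n,1}_{E,h}\overset{\overline{\partial}_1}{\longrightarrow}\cdots\overset{\overline{\partial}_{n-1}}{\longrightarrow}\mathscr{L}^{n,n}_{E,h}\longrightarrow0.
\end{align*}
Each $\mathscr{L}^{n,q}_{E,h}$ is a module over the sheaf $\mathcal{E}_X$ of germs of smooth functions on $X$ (multiplication by a smooth function and differentiation of a smooth cut-off preserve both local integrability conditions), hence is fine and in particular acyclic.

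Second, I would check that this complex is a fine resolution of $K_X\otimes\mathscr{E}(h)$. For exactness in positive degree, fix $x_0\in X$ and take a bounded Stein neighborhood $\Omega$ of $x_0$ on which $E$ is trivial and $\omega_X=\idd\varphi$ for a smooth, hence bounded, Kähler potential $\varphi$. By strict Nakano $\delta_{\omega_X}$-positivity (Definition~\ref{sing strictly positive of Grif and Nakano}), the metric $he^{\delta\varphi}$ is Nakano semi-positive on $\Omega$ in the sense of Definition~\ref{def Nakano semi-posi sing}; applying the $L^2$-estimate built into that definition with a strictly plurisubharmonic weight such as $\psi=|z|^2$, for which $B_{\psi,\omega_X}=[\idd\psi\otimes\mathrm{id}_E,\Lambda_{\omega_X}]$ is bounded below on $\Omega$, one solves $\overline{\partial}v=f$ on $\Omega$ for any $\overline{\partial}$-closed $f\in\mathscr{L}^{n,q}_{E,h}(\Omega)$ with $q\geq1$, the boundedness of $\Omega$ letting one replace $he^{\delta\varphi}e^{-\psi}$ by $h$ in the integrability conditions (after shrinking $\Omega$ so that $|f|^2_h$ is integrable on $\Omega$) and conclude $v\in\mathscr{L}^{n,q-1}_{E,h}(\Omega)$. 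For exactness in degree $0$, the Dolbeault--Grothendieck lemma (Lemma~\ref{Regularity of currents for (p,0)-forms}) identifies $\ker\overline{\partial}_0$ with the sheaf of holomorphic $(n,0)$-forms with values in $E$ whose pointwise $h$-norm is locally square integrable, which is exactly $K_X\otimes\mathscr{E}(h)$. Hence $H^q(X,K_X\otimes\mathscr{E}(h))\cong H^q(\Gamma(X,\mathscr{L}^{n,\bullet}_{E,h}))$.

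Third, I would carry out the global step. Since $X$ is projective, hence compact, any $f\in\Gamma(X,\mathscr{L}^{n,q}_{E,h})$ has $|f|^2_h$ globally integrable, so $f\in L^2_{n,q}(X,E,h,\omega_X)$; if moreover $\overline{\partial}f=0$ with $q>0$, Theorem~\ref{L2-estimate for Nakano positive} yields $u\in L^2_{n,q-1}(X,E,h,\omega_X)$ with $\overline{\partial}u=f$ and $\int_X|u|^2_{h,\omega_X}dV_{\omega_X}\leq\frac{1}{\delta q}\int_X|f|^2_{h,\omega_X}dV_{\omega_X}$. Then $|u|^2_h$ and $|\overline{\partial}u|^2_h=|f|^2_h$ are integrable, so $u\in\Gamma(X,\mathscr{L}^{n,q-1}_{E,h})$, giving $H^q(\Gamma(X,\mathscr{L}^{n,\bullet}_{E,h}))=0$ for $q>0$, which is the claim.

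The main obstacle is the local exactness in the second step: unlike the line bundle situation of Theorem~\ref{Ext Nadel V-thm}, one cannot reduce local $\overline{\partial}$-solvability with $L^2_{loc}$ bounds to H\"ormander's estimate for a plurisubharmonic weight, and must instead feed the twisted metric $he^{\delta\varphi}$ into the abstract $L^2$-estimate packaged in Definition~\ref{def Nakano semi-posi sing}, arranging the auxiliary weight $\psi$ and the domain $\Omega$ so that the produced solution genuinely lies in $\mathscr{L}^{n,q-1}_{E,h}$; one must also verify carefully that $\ker\overline{\partial}_0=K_X\otimes\mathscr{E}(h)$ and that each $\mathscr{L}^{n,q}_{E,h}$ is really an $\mathcal{E}_X$-module.
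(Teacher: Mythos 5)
Your proposal is correct, and it is essentially the argument the paper relies on: the theorem is quoted from [Ina22, Theorem 1.5] without an independent proof here, but the method you describe (fine resolution by the sheaves $\mathscr{L}^{n,q}_{E,h}$, local exactness via the $L^2$-estimate packaged in Definition \ref{def Nakano semi-posi sing} applied to $he^{\delta\varphi}$ with a bounded auxiliary weight, identification of $\ker\overline{\partial}_0$ with $K_X\otimes\mathscr{E}(h)$ by Dolbeault--Grothendieck, and the global step via Theorem \ref{L2-estimate for Nakano positive}) is exactly the scheme the paper itself executes for Theorems \ref{Ext Nadel V-thm}, \ref{Ext sing Griffiths V-thm} and \ref{sing dual Nakano V-thm}, and the scheme of the cited source.
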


Here, from the above discussion we consider dual Nakano positivity using $L^2$-estimates and its relation to Definition \ref{def dual Nakano semi-posi sing}.
For convenience, we say that $h$ is $L^2$-$\textit{type}$ $\textit{dual Nakano}$ $\textit{semi}\,$-$\textit{positive}$ if $h$ is a Griffiths semi-positive singular Hermitian metric and has the following $L^2$-estimates condition 
(\,=\,the singular case of condition $(2)$ in Proposition \ref{characterization of smooth dual Nakano}):

    For any Stein coordinate $S$ such that $E|_S$ is trivial, any \kah metric $\omega_S$ on $S$,
    any smooth strictly plurisubharmonic function $\psi$ on $S$, any positive integer $p\in\{1,\cdots,n\}$ and any $\overline{\partial}$-closed $f\in L^2_{p,n}(S,E,he^{-\psi},\omega_S)$ 
    there exists $u\in L^2_{p,n-1}(S,E,he^{-\psi},\omega_S)$ satisfying $\overline{\partial}u=f$ and 
    \begin{align*}
        \int_S|u|^2_{h,\omega_S}e^{-\psi}dV_{\omega_S}\leq\int_S\langle B^{-1}_{\psi,\omega_S}f,f\rangle_{h,\omega_S}e^{-\psi}dV_{\omega_S},
    \end{align*}
    where $B_{\psi,\omega_S}=[\idd\psi\otimes\mathrm{id}_E,\Lambda_{\omega_S}]$. Here we assume that the right-hand side is finite.

Then we obtain the following proposition that the natural Definition \ref{def dual Nakano semi-posi sing} satisfies the above condition analogous to Definition \ref{def Nakano semi-posi sing}.

\begin{proposition}\label{dual Nakano semi-posi sing then has L2-estimate condition}
    Assume that a singular Hermitian metric $h$ on $E$ is dual Nakano semi-positive. 
    Then $h$ is $L^2$-type dual Nakano semi-positive. 
\end{proposition}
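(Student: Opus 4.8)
The plan is to verify the two ingredients of ``$L^2$-type dual Nakano semi-positive'' in turn: first that $h$ is Griffiths semi-positive, and then the $(p,n)$-$L^2$-estimate on Stein coordinates. The first is immediate. Since $h^*$ is Nakano semi-negative in the sense of Definition~\ref{def Nakano semi-negative as Raufi}, specializing the defining condition to the $n$-tuples of the form $(v,\dots,v)$ shows, as recorded in the discussion of $T^h_u$ preceding Definition~\ref{def Nakano semi-negative as Raufi}, that $h^*$ is Griffiths semi-negative; hence $h=(h^*)^*$ is Griffiths semi-positive by Definition~\ref{sing Griffiths def}. In particular, on any trivializing chart the entries of $h^*$ are locally differences of plurisubharmonic functions, so $h^*\in L^1_{\mathrm{loc}}$ and its mollifications are well defined.

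For the $L^2$-estimate, fix data $(S,\omega_S,\psi,p,f)$ as in the statement and exhaust $S$ by Stein open sets $S_1\Subset S_2\Subset\cdots$ with $\bigcup_jS_j=S$; working in a coordinate chart (or after a Docquier--Grauert retraction, as in the proof of Proposition~\ref{L2-estimate for (p,n)-forms on Stein}) put $h^*_\nu:=h^*\ast\rho_{1/\nu}$. I would first check that $h^*_\nu$ is a smooth Nakano semi-negative metric converging to $h^*$ from above: positivity $h^*_\nu>0$ follows from $\det h^*>0$ a.e.; the monotone convergence $h^*_\nu\searrow h^*$ a.e.\ is the usual behaviour of mollifications applied to the plurisubharmonic functions $v\mapsto\langle h^*v,v\rangle$; and Nakano semi-negativity is preserved because, for any holomorphic $n$-tuple $u=(u_1,\dots,u_n)$, the translated tuples $u(\cdot+w)$ yield the identity $T^{h^*_\nu}_u(z)=\int T^{h^*}_{u(\cdot+w)}(z-w)\,\rho_{1/\nu}(w)\,dw$, so $T^{h^*_\nu}_u$ is an average of plurisubharmonic forms and hence plurisubharmonic. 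Dualizing, $h_\nu:=(h^*_\nu)^*$ is a smooth dual Nakano semi-positive metric with $h_\nu\nearrow h$.

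Next I would run the smooth theory on each $S_j$. Because $(E,h_\nu)$ is dual Nakano semi-positive, condition~(b) gives $A^{p,n}_{E,h_\nu,\omega_S}\ge0$, hence $A^{p,n}_{E,h_\nu e^{-\psi},\omega_S}=A^{p,n}_{E,h_\nu,\omega_S}+B_{\psi,\omega_S}\ge B_{\psi,\omega_S}>0$ on $S$ for $p\ge1$. Since $|f|^2_{h_\nu}\le|f|^2_h$ we have $f\in L^2_{p,n}(S_j,E,h_\nu e^{-\psi},\omega_S)$, and Theorem~\ref{L^2-estimate completeness (p,n)-forms} applied on $S_j$ (Stein, hence carrying a complete \kah metric) produces $u_{j,\nu}$ with $\overline{\partial}u_{j,\nu}=f$ on $S_j$ and
\begin{align*}
  \int_{S_j}|u_{j,\nu}|^2_{h_\nu,\omega_S}e^{-\psi}dV_{\omega_S}
  &\le\int_{S_j}\langle(A^{p,n}_{E,h_\nu e^{-\psi},\omega_S})^{-1}f,f\rangle_{h_\nu,\omega_S}e^{-\psi}dV_{\omega_S}\\
  &\le\int_{S_j}\langle B^{-1}_{\psi,\omega_S}f,f\rangle_{h_\nu,\omega_S}e^{-\psi}dV_{\omega_S}
   \le\int_S\langle B^{-1}_{\psi,\omega_S}f,f\rangle_{h,\omega_S}e^{-\psi}dV_{\omega_S}=:C<+\infty,
\end{align*}
where we used $A^{p,n}_{E,h_\nu e^{-\psi},\omega_S}\ge B_{\psi,\omega_S}$ and the monotonicity $\langle B^{-1}_{\psi,\omega_S}\,\cdot\,,\,\cdot\,\rangle_{h_\nu}\le\langle B^{-1}_{\psi,\omega_S}\,\cdot\,,\,\cdot\,\rangle_{h}$ coming from $h_\nu\le h$ (simultaneous diagonalization). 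Finally I would pass to the limit twice by a diagonal argument: for fixed $S_j$ the inequality $h_{\nu_0}\le h_\nu$ $(\nu\ge\nu_0)$ makes $(u_{j,\nu})_\nu$ a bounded family in $L^2_{p,n-1}(S_j,E,h_{\nu_0}e^{-\psi},\omega_S)$ for each $\nu_0$, so a diagonal weak limit $u_j$ solves $\overline{\partial}u_j=f$ on $S_j$ with $\int_{S_j}|u_j|^2_{h_{\nu_0}}e^{-\psi}dV_{\omega_S}\le C$ for all $\nu_0$, whence $\int_{S_j}|u_j|^2_h e^{-\psi}dV_{\omega_S}\le C$ by monotone convergence as $h_{\nu_0}\nearrow h$; a further diagonal weak limit of $(u_j)_j$ over the exhaustion gives $u\in L^2_{p,n-1}(S,E,he^{-\psi},\omega_S)$ with $\overline{\partial}u=f$ and $\int_S|u|^2_h e^{-\psi}dV_{\omega_S}\le C$, which is the desired estimate. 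Combined with the Griffiths semi-positivity above, this shows $h$ is $L^2$-type dual Nakano semi-positive.

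I expect the main obstacle to be the claim that the mollifications $h^*_\nu$ remain Nakano semi-negative in Raufi's sense. The naive computation with constant tuples only yields the strictly weaker pointwise positivity of $\idd(h^*\ast\rho_{1/\nu})$, and one genuinely needs the averaging identity for $T^{h^*_\nu}_u$ over \emph{translated} holomorphic tuples stated above (which is where the full strength of Definition~\ref{def Nakano semi-negative as Raufi}, quantifying over all holomorphic $n$-tuples, is used). The remaining points -- positivity and monotonicity of $h^*_\nu$, and the double weak limit with its two appeals to monotone convergence -- are routine but must be carried out keeping careful track of which metric each norm is measured in.
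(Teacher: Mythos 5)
Your argument is correct and follows the paper's own route: the heart of both proofs is to mollify the dual metric $h^*$, verify via the averaging identity for $T^{h^*_\nu}_u$ (over translated data) that the smooth approximants remain Nakano semi-negative, and then dualize to get smooth dual Nakano semi-positive metrics increasing to $h$. The only difference is that where you carry out the subsequent $S_j$-exhaustion, smooth $L^2$-estimate, and double weak-limit argument by hand, the paper delegates exactly that step to Proposition \ref{sequence of dual Nakano semi-positive} (cited from [Wat21, Corollary 5.7]), so your write-up amounts to the same proof with that cited proposition unfolded.
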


This proof is in the next section. The above discussion of Nakano semi-positivity and this proposition show the usefulness of Definition \ref{def dual Nakano semi-posi sing}.
From Proposition \ref{characterization of smooth dual Nakano}, this $L^2$-estimates condition can be considered a natural extension of dual Nakano semi-positivity to singular Hermitian metrics and coincides with the usual definition if $h$ is smooth. 

Using Definition \ref{sing strictly positive of Grif and Nakano} as a reference, we introduce a definition of strictly positivity for dual Nakano as follows.

\begin{definition}\label{sing strictly positive of dual Nakano}
    Let $(X,\omega_X)$ be a \kah manifold and $h$ be a singular Hermitian metric on $E$.
    We say that $h$ is $\textit{strictly dual Nakano}~ \delta_{\omega_X}$-$\textit{positive}$ if for any open subset $U$ and any \kah potential $\varphi$ of $\omega_X$ on $U$, $he^{\delta\varphi}$ is dual Nakano semi-positive on $U$. 
\end{definition}

By using this definition, we get the following $L^2$-estimate theorem which is an extension of Theorem \ref{line bdl big L2-estimate for (p,n)-forms} to holomorphic vector bundles.

\begin{theorem}\label{L2-estimate for dual Nakano positive}
    Let $(X,\omega_X)$ be a projective manifold and a \kah metric on $X$ and $p$ be a positive integer.
    We assume that $(E,h)$ is strictly dual Nakano $\delta_{\omega_X}$-positive on $X$. Then for any $\overline{\partial}$-closed $f\in L^2_{p,n}(X,E,h,\omega_X)$
    there exists $u\in L^2_{p,n-1}(X,E,h,\omega_X)$ satisfies $\overline{\partial}u=f$ and 
    \begin{align*}
        \int_X|u|^2_{h,\omega_X}dV_{\omega_X}\leq\frac{1}{\delta p}\int_X|f|^2_{h,\omega_X}dV_{\omega_X}.
    \end{align*}
\end{theorem}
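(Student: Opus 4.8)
The plan is to imitate the proof of Theorem \ref{Ext Nadel V-thm} as carried out through Proposition \ref{L2-estimate for (p,n)-forms on Stein} and Theorem \ref{line bdl big L2-estimate for (p,n)-forms}, replacing the scalar line bundle with the vector bundle $(E,h)$ and the use of the curvature estimate $i\Theta_{L,h}\ge\varepsilon\omega$ by the dual Nakano positivity hypothesis via Proposition \ref{dual Nakano semi-posi sing then has L2-estimate condition}. First I would, by Serre's GAGA, choose a hypersurface $H\subset X$ such that $S:=X\setminus H$ is Stein and $E|_S$ is trivial. Since $\omega_X$ is a \kah metric on the projective manifold $X$, locally on $S$ we may write $\omega_X=\idd\Omega$ for a local \kah potential $\Omega$; patching these together, Stein-ness of $S$ lets us use an exhaustion argument (as in Proposition \ref{L2-estimate for (p,n)-forms on Stein}) to reduce to small Stein pieces where a global strictly plurisubharmonic potential $\psi$ with $\idd\psi\ge\delta^{-1}\cdot(\text{something})$ is available. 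The key point is that on such a Stein piece $U$ with \kah potential $\varphi$ of $\omega_X$, the metric $he^{\delta\varphi}$ is dual Nakano semi-positive by the hypothesis (Definition \ref{sing strictly positive of dual Nakano}), hence $L^2$-type dual Nakano semi-positive by Proposition \ref{dual Nakano semi-posi sing then has L2-estimate condition}.

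Next I would unwind the $L^2$-estimate supplied by Proposition \ref{dual Nakano semi-posi sing then has L2-estimate condition} applied to the metric $h':=he^{\delta\varphi}$ with auxiliary weight $\psi$. Writing $f$ as a $\overline{\partial}$-closed $(p,n)$-form valued in $E$, one solves $\overline{\partial}u=f$ with the estimate
\begin{align*}
    \int_U|u|^2_{h',\omega_S}e^{-\psi}dV_{\omega_S}\leq\int_U\langle B^{-1}_{\psi,\omega_S}f,f\rangle_{h',\omega_S}e^{-\psi}dV_{\omega_S},
\end{align*}
where $B_{\psi,\omega_S}=[\idd\psi\otimes\mathrm{id}_E,\Lambda_{\omega_S}]$. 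The idea is then to choose $\psi$ so that, after absorbing the $e^{\delta\varphi}$ factor and $e^{-\psi}$ into the estimate, the curvature operator $B_{\psi,\omega_S}$ satisfies $\langle B^{-1}_{\psi,\omega_S}f,f\rangle\le\frac{1}{\delta p}|f|^2$: this is exactly the computation $\langle[\idd\psi\otimes\mathrm{id}_E,\Lambda_{\omega_S}]^{-1}f,f\rangle\le\frac{1}{\delta p}|f|^2$ when $\idd\psi\ge\delta\omega_S$ on $(p,n)$-forms (recall from the proof of Proposition \ref{L2-estimate for (p,n)-forms on Stein} that $[\,\varepsilon\omega,\Lambda_\omega\,]$ acts as $p\varepsilon$ on $(p,n)$-forms), so one wants to take $\psi$ (roughly $\delta\cdot(\text{potential of }\omega_X)$ plus a small completing term) so that the weighted metric $h'e^{-\psi}$ combines to give exactly $h$ up to the controllable factor, with $\idd\psi\ge\delta\omega_X$. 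Globalizing via the standard exhaustion-and-weak-limit machinery (sublevel sets of an exhaustion function, completing metrics $\omega_\varepsilon=\omega_X+\varepsilon\widehat\omega$, monotonicity from Lemmas \ref{uniformity of (p,n)-forms} and \ref{uniformity of (p,q)-forms not dV}, diagonal subsequence extraction, monotone convergence) produces a solution on $S=X\setminus H$ with
\begin{align*}
    \int_{X\setminus H}|u|^2_{h,\omega_X}dV_{\omega_X}\leq\frac{1}{\delta p}\int_{X\setminus H}|f|^2_{h,\omega_X}dV_{\omega_X}\leq\frac{1}{\delta p}\int_X|f|^2_{h,\omega_X}dV_{\omega_X}.
\end{align*}
Finally, extending $u$ by zero across $H$ and invoking Lemma \ref{Ext d-equation for hypersurface} gives $\overline{\partial}u=f$ on all of $X$ with the same bound, completing the proof.

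The main obstacle I anticipate is the bookkeeping needed to pass from the \emph{local} $L^2$-type dual Nakano estimate (which is stated only on Stein pieces with a prescribed smooth strictly plurisubharmonic weight $\psi$ and with the curvature term $B_{\psi,\omega_S}$ on the right) to the \emph{global} estimate on $X\setminus H$ with the clean constant $\frac{1}{\delta p}$ and with $h$ (not $he^{\delta\varphi}$) as the metric. This requires: (i) a careful choice of the global strictly plurisubharmonic exhausting weight on the Stein manifold $X\setminus H$ that agrees, up to bounded error, with $\delta$ times a potential of $\omega_X$ so that the $e^{\delta\varphi}$ twist is exactly cancelled; (ii) verifying that $B_{\psi,\omega_S}\ge\delta p\cdot\mathrm{id}$ on $(p,n)$-forms, which is the vector-bundle analogue of the scalar estimate used in Proposition \ref{L2-estimate for (p,n)-forms on Stein} and follows from Lemma \ref{calculate (p,q)-forms}; and (iii) the completeness/weak-limit argument to remove the auxiliary complete \kah metric and the exhaustion, which is routine but must be organized exactly as in the proof of Proposition \ref{L2-estimate for (p,n)-forms on Stein}. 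None of these steps is conceptually new, but combining the twist cancellation with the exhaustion limit cleanly is where the care is needed.
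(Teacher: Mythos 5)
Your proposal is correct and follows essentially the same route as the paper: Serre's GAGA to get a Stein Zariski-open $S=X\setminus H$ with $E|_S$ trivial and $\omega_X=\idd\varphi$ globally on $S$, then applying the $L^2$-type dual Nakano condition of $he^{\delta\varphi}$ (via Proposition \ref{dual Nakano semi-posi sing then has L2-estimate condition}) with the auxiliary weight $\psi=\delta\varphi$, so that the twist cancels and $B_{\delta\varphi,\omega_X}$ acts as $\delta p$ on $(p,n)$-forms, and finally extending by zero across $H$ via Lemma \ref{Ext d-equation for hypersurface}. The only difference is that the exhaustion-and-weak-limit globalization you anticipate as the main obstacle is not needed: the $L^2$-type condition is already a global solvability statement on the Stein coordinate $S$ once the global potential $\varphi$ is fixed, so the paper's proof is a one-step application of it.
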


\begin{proof}
    By Serre's GAGA, there exists a proper Zariski open subset $S$ such that $E|_S$ is trivial over $S$ and $\omega_X$ is $\partial\overline{\partial}$-exact on $S$. We can take as a Stein open subset.
    We fix a \kah potential $\varphi$ of $\omega_X$ on $S$. Then we have that 
    \begin{align*}
        \langle B_{\delta\varphi,\omega_X}f,f\rangle_{h,\omega_X}
        &=\delta p|f|^2_{h,\omega_X},\\
        \langle B^{-1}_{\delta\varphi,\omega_X}f,f\rangle_{h,\omega_X}&=\frac{1}{\delta p}|f|^2_{h,\omega_X}.
    \end{align*}

    From Definition \ref{sing strictly positive of dual Nakano} 
    and Proposition \ref{dual Nakano semi-posi sing then has L2-estimate condition}, for any smooth strictly plurisubharmonic function $\psi$ on $S$, there exists $u\in L^2_{p,n-1}(S,E,he^{\delta\varphi-\psi},\omega_X)$ such that $\overline{\partial}u=f$ and 
    \begin{align*}
        \int_S|u|^2_{h,\omega_X}e^{\delta\varphi-\psi}dV_{\omega_X}\leq\int_S\langle B^{-1}_{\delta\varphi,\omega_X}f,f\rangle_{h,\omega_X}e^{\delta\varphi-\psi}dV_{\omega_X}
    \end{align*}
    if the right-hand side is finite. Taking $\psi=\delta\varphi$, we get a solution $u\in L^2_{p,n-1}(S,E,h,\omega_X)$ of $\overline{\partial}u=f$ such that 
    \begin{align*}
        \int_S|u|^2_{h,\omega_X}dV_{\omega_X}&\leq\int_S\langle B^{-1}_{\delta\varphi,\omega_X}f,f\rangle_{h,\omega_X}dV_{\omega_X}=\frac{1}{\delta p}\int_S|f|^2_{h,\omega_X}dV_{\omega_X}\\
        &\leq\frac{1}{\delta p}\int_X|f|^2_{h,\omega_X}dV_{\omega_X}<+\infty.
    \end{align*}

    Letting $u=0$ on $X\setminus S$, we have that $u\in L^2_{p,n-1}(X,E,h,\omega_X)$, $\overline{\partial}u=f$ on $X$ and 
    \begin{align*}
        \int_X|u|^2_{h,\omega_X}dV_{\omega_X}&\leq\frac{1}{\delta p}\int_X|f|^2_{h,\omega_X}dV_{\omega_X},
    \end{align*}
    from Lemma \ref{Ext d-equation for hypersurface}.
\end{proof}

\section{Applications and proof of Proposition \ref{dual Nakano semi-posi sing then has L2-estimate condition}}

In this section, as applications of Theorem \ref{smooth dual Nakano L2-condition}, 
we introduce a property necessary for proofs of Proposition \ref{dual Nakano semi-posi sing then has L2-estimate condition} and Theorem \ref{Ext Hor Thm to vect bdl} that the $(n,q)$ and $(p,n)$-$L^2$-estimate condition is preserved with respect to increasing sequences and we prove Proposition \ref{dual Nakano semi-posi sing then has L2-estimate condition}.
This phenomenon is first mentioned in \cite{Ina21a} 
as an extension of the properties seen in plurisubharmonic functions.  
After that, it is extended to the case of Nakano semi-positivity in \cite{Ina21b} and then to the case of the $(n,q)$ and $(p,n)$-$L^2$-estimate condition in \cite{Wat21}.

\begin{proposition}\label{sequence of Nakano semi-positive}$(\mathrm{cf.\,[Ina22,\,Proposition\,6.1]})$
    Let $h$ be a singular Hermitian metric on $E$.
    Assume that there exists a sequence of smooth Nakano semi-positive metrics $(h_\nu)_{\nu\in\mathbb{N}}$ increasing to $h$ pointwise.
    Then $h$ is Nakano semi-positive in the sense of Definition \ref{def Nakano semi-posi sing}, (i.e.\,$L^2$-type).
\end{proposition}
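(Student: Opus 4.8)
The plan is to reduce the statement for $h$ to the already-known $L^2$-estimate for the smooth metrics $h_\nu$ via a limiting argument, exactly in the style of Proposition \ref{L2-estimate for (p,n)-forms on Stein} and the proof of Theorem \ref{L^2-estimate completeness (p,n)-forms}. Recall (condition (a)) that a smooth Hermitian metric is Nakano semi-positive if and only if $A^{n,q}_{E,h_\nu,\omega_S}\geq 0$ for all $q\geq 1$; combined with Theorem \ref{L^2-estimate completeness (p,n)-forms} applied in bidegree $(n,q)$, each $(E,h_\nu)$ satisfies the $L^2$-solvability estimate of Definition \ref{def Nakano semi-posi sing} on every relevant Stein piece with a complete \kah metric. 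So the task is purely to pass this estimate through the increasing limit $h_\nu\nearrow h$.

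First I would fix the test data of Definition \ref{def Nakano semi-posi sing}: a Stein coordinate $S$ trivializing $E$, a \kah metric $\omega_S$, a smooth strictly plurisubharmonic $\psi$, an integer $q\geq 1$, and a $\overline{\partial}$-closed $f\in L^2_{n,q}(S,E,he^{-\psi},\omega_S)$ with finite right-hand side $\int_S\langle B^{-1}_{\psi,\omega_S}f,f\rangle_{h,\omega_S}e^{-\psi}dV_{\omega_S}$. Since $h_\nu\nearrow h$ pointwise, we have $|f|^2_{h_\nu}\leq|f|^2_h$ and $\langle B^{-1}_{\psi,\omega_S}f,f\rangle_{h_\nu,\omega_S}\leq\langle B^{-1}_{\psi,\omega_S}f,f\rangle_{h,\omega_S}$ (the operator $B_{\psi,\omega_S}$ acts on the form part only, and its inverse is positive, so the $h_\nu$-norm of the pairing is monotone in $h_\nu$), hence the right-hand side for $h_\nu$ is also finite, uniformly bounded by the one for $h$. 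Applying the smooth-case estimate (Theorem \ref{L^2-estimate completeness (p,n)-forms} in bidegree $(n,q)$, using a complete \kah metric $\widehat{\omega}_S$ on the Stein $S$ and noting $B_{\psi,\omega_S}\leq A^{n,q}_{E,h_\nu e^{-\psi},\omega_S}$ so that $\langle(A^{n,q}_{E,h_\nu e^{-\psi},\omega_S})^{-1}f,f\rangle\leq\langle B^{-1}_{\psi,\omega_S}f,f\rangle$) yields $u_\nu\in L^2_{n,q-1}(S,E,h_\nu e^{-\psi},\omega_S)$ with $\overline{\partial}u_\nu=f$ and
\begin{align*}
    \int_S|u_\nu|^2_{h_\nu,\omega_S}e^{-\psi}dV_{\omega_S}\leq\int_S\langle B^{-1}_{\psi,\omega_S}f,f\rangle_{h,\omega_S}e^{-\psi}dV_{\omega_S}=:C<+\infty.
\end{align*}
Since $h_1\leq h_\nu$, the sequence $(u_\nu)$ is bounded in the fixed Hilbert space $L^2_{n,q-1}(S,E,h_1 e^{-\psi},\omega_S)$; extract a weakly convergent subsequence with limit $u$, which still satisfies $\overline{\partial}u=f$ in the distributional sense. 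For the norm bound, fix $\nu_0$: for $\nu\geq\nu_0$ one has $\int_S|u_\nu|^2_{h_{\nu_0},\omega_S}e^{-\psi}dV_{\omega_S}\leq C$, and weak lower semicontinuity of the norm in $L^2(S,E,h_{\nu_0}e^{-\psi},\omega_S)$ gives $\int_S|u|^2_{h_{\nu_0},\omega_S}e^{-\psi}dV_{\omega_S}\leq C$; letting $\nu_0\to\infty$ and invoking the monotone convergence theorem ($|u|^2_{h_{\nu_0}}\nearrow|u|^2_h$) produces $\int_S|u|^2_{h,\omega_S}e^{-\psi}dV_{\omega_S}\leq C$, which is the desired estimate. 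Since the Griffiths semi-positivity of $h$ needed to even state Definition \ref{def Nakano semi-posi sing} follows from the fact that an increasing limit of Griffiths (indeed Nakano, hence Griffiths) semi-positive metrics is Griffiths semi-positive (plurisubharmonicity of $\log$ of the dual norms is preserved under decreasing limits), this completes the argument.

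The main obstacle is the bookkeeping of the two interlocking monotonicities: one must choose the auxiliary complete \kah metric on $S$ and run the $\varepsilon\to 0$ limit of Theorem \ref{L^2-estimate completeness (p,n)-forms} \emph{inside} the $\nu\to\infty$ limit, so in effect a diagonal extraction over $(\varepsilon,\nu)$ is required, and one must be careful that the curvature-operator inequality $A^{n,q}_{E,h_\nu e^{-\psi},\omega_S}\geq B_{\psi,\omega_S}>0$ is used only for the smooth metrics $h_\nu$ (the singular $h$ has no well-defined curvature current). A secondary subtlety is that weak lower semicontinuity must be applied in a Hilbert space whose weight $h_{\nu_0}e^{-\psi}$ does not move during the $\nu\to\infty$ extraction — this is why one fixes $\nu_0$ first and only afterwards lets $\nu_0\to\infty$ via monotone convergence. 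None of these steps is deep; they are precisely the techniques already deployed in the proof of Proposition \ref{L2-estimate for (p,n)-forms on Stein} and Theorem \ref{L^2-estimate completeness (p,n)-forms}, transposed from $(p,n)$-forms to $(n,q)$-forms.
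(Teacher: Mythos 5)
Your argument is correct, and it is essentially the argument the paper relies on: the paper itself gives no proof of this proposition (it is quoted from [Ina22, Proposition 6.1]), but your combination of the smooth-case estimate $A^{n,q}_{E,h_\nu e^{-\psi},\omega_S}\geq B_{\psi,\omega_S}>0$, the two monotonicities in $h_\nu$, weak extraction in the fixed space $L^2_{n,q-1}(S,E,h_{\nu_0}e^{-\psi},\omega_S)$, and the final monotone-convergence step is exactly the technique used in the cited source and deployed verbatim elsewhere in this paper (Proposition \ref{L2-estimate for (p,n)-forms on Stein}, Theorem \ref{L^2-estimate completeness (p,n)-forms}). The only point to make fully explicit is the diagonal extraction over $\nu_0$ so that the chosen subsequence converges weakly in every $L^2(h_{\nu_0}e^{-\psi})$ simultaneously before applying lower semicontinuity, which you already flag.
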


\begin{proposition}\label{sequence of dual Nakano semi-positive}$(\mathrm{cf.\,[Wat22,\,Corollary\,5.7]})$
    Let $h$ be a singular Hermitian metric on $E$.
    Assume that there exists a sequence of smooth dual Nakano semi-positive metrics $(h_\nu)_{\nu\in\mathbb{N}}$ increasing to $h$ pointwise.
    Then $h$ is $L^2$-type dual Nakano semi-positive. 
\end{proposition}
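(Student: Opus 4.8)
The plan is to verify directly the two requirements in the definition of \emph{$L^2$-type dual Nakano semi-positive}: that $h$ is Griffiths semi-positive, and that $h$ satisfies the Stein-local $L^2$-estimate condition; the argument parallels Proposition \ref{sequence of Nakano semi-positive}. The Griffiths part is immediate by duality: each $h_\nu$ is smooth dual Nakano semi-positive, hence Griffiths semi-positive, so $h_\nu^*$ is Griffiths semi-negative, i.e. $|s|_{h_\nu^*}$ is plurisubharmonic for every local holomorphic section $s$ of $E^*$. Since $(h_\nu)$ increases to $h$ pointwise, $(h_\nu^*)$ decreases to $h^*$ pointwise, so $|s|_{h_\nu^*}$ decreases to $|s|_{h^*}$; a decreasing limit of plurisubharmonic functions is plurisubharmonic (it is not $\equiv-\infty$ since $|s|_{h^*}\geq0$), hence $h^*$ is Griffiths semi-negative and $h$ is Griffiths semi-positive.

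For the $L^2$-estimate condition, fix a Stein coordinate $S$ with $E|_S$ trivial, a \kah metric $\omega_S$ on $S$, a smooth strictly plurisubharmonic $\psi$ on $S$, an integer $p\in\{1,\ldots,n\}$, and a $\overline{\partial}$-closed $f\in L^2_{p,n}(S,E,he^{-\psi},\omega_S)$ with $C:=\int_S\langle B_{\psi,\omega_S}^{-1}f,f\rangle_{h,\omega_S}e^{-\psi}dV_{\omega_S}<+\infty$. Since $h_\nu\leq h$, one has $|f|^2_{h_\nu,\omega_S}\leq|f|^2_{h,\omega_S}$, so $f\in L^2_{p,n}(S,E,h_\nu e^{-\psi},\omega_S)$ for every $\nu$; and because $B_{\psi,\omega_S}^{-1}$ is a positive operator of the form (scalar operator)$\,\otimes\,\mathrm{id}_E$, diagonalizing $h_\nu$ in an $h$-orthonormal frame of $E$ (whose eigenvalues are $\leq1$) gives the pointwise monotonicity $\langle B_{\psi,\omega_S}^{-1}f,f\rangle_{h_\nu,\omega_S}\leq\langle B_{\psi,\omega_S}^{-1}f,f\rangle_{h,\omega_S}$. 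Applying Proposition \ref{characterization of smooth dual Nakano}, implication $(1)\Rightarrow(2)$, to the smooth dual Nakano semi-positive metric $h_\nu$ (with the data $S,\omega_S,\psi,p$) yields $u_\nu\in L^2_{p,n-1}(S,E,h_\nu e^{-\psi},\omega_S)$ with $\overline{\partial}u_\nu=f$ and
\[
\int_S|u_\nu|^2_{h_\nu,\omega_S}e^{-\psi}dV_{\omega_S}\leq\int_S\langle B_{\psi,\omega_S}^{-1}f,f\rangle_{h_\nu,\omega_S}e^{-\psi}dV_{\omega_S}\leq C,
\]
a bound uniform in $\nu$.

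Now I would pass to a weak limit exactly as in the proof of Proposition \ref{L2-estimate for (p,n)-forms on Stein}. For fixed $\nu_0$ and $\nu\geq\nu_0$, the monotonicity $|u_\nu|^2_{h_{\nu_0},\omega_S}\leq|u_\nu|^2_{h_\nu,\omega_S}$ shows that $(u_\nu)_{\nu\geq\nu_0}$ is bounded in the Hilbert space $L^2_{p,n-1}(S,E,h_{\nu_0}e^{-\psi},\omega_S)$; extracting weakly convergent subsequences and diagonalizing over $\nu_0$ produces a subsequence $(u_{\nu_k})$ converging weakly in $L^2_{p,n-1}(S,E,h_{\nu_0}e^{-\psi},\omega_S)$ for every $\nu_0$, with limit $u$. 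Then $\overline{\partial}u=f$ (weak limits commute with $\overline{\partial}$ in the distributional sense), and weak lower semicontinuity of the norm together with $|u_{\nu_k}|^2_{h_{\nu_0}}\leq|u_{\nu_k}|^2_{h_{\nu_k}}$ for $\nu_k\geq\nu_0$ gives $\int_S|u|^2_{h_{\nu_0},\omega_S}e^{-\psi}dV_{\omega_S}\leq C$ for every $\nu_0$. Letting $\nu_0\to+\infty$ and using the monotone convergence theorem (as $|u|^2_{h_{\nu_0}}\nearrow|u|^2_h$ a.e.) yields $\int_S|u|^2_{h,\omega_S}e^{-\psi}dV_{\omega_S}\leq C$, which is the required estimate; in particular $u\in L^2_{p,n-1}(S,E,he^{-\psi},\omega_S)$.

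The main obstacle is purely technical: coordinating the nested limits in the Hermitian metrics on $E$ with the weak-compactness argument. The proof is deliberately modeled on that of Proposition \ref{L2-estimate for (p,n)-forms on Stein}, with the role of the varying \kah metric on $X$ now played by the increasing metrics $h_\nu$ on $E$, and Lemma \ref{uniformity of (p,q)-forms not dV} replaced by the elementary pointwise inequalities $h_\nu\leq h_{\nu+1}\leq h$. The only genuinely new steps to record carefully are the pointwise comparison $\langle B_{\psi,\omega_S}^{-1}f,f\rangle_{h_\nu,\omega_S}\leq\langle B_{\psi,\omega_S}^{-1}f,f\rangle_{h,\omega_S}$ and the verification that $f\in L^2_{p,n}(S,E,h_\nu e^{-\psi},\omega_S)$ for all $\nu$, both of which are straightforward consequences of $h_\nu\leq h$.
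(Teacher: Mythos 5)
Your proof is correct. The paper gives no proof of this proposition itself (it is quoted from [Wat21, Corollary 5.7]), but your argument --- apply the smooth characterization $(1)\Rightarrow(2)$ of Proposition \ref{characterization of smooth dual Nakano} to each $h_\nu$, use $h_\nu\leq h$ to get $f\in L^2_{p,n}(S,E,h_\nu e^{-\psi},\omega_S)$ and the uniform bound via the monotonicity of $\langle B^{-1}_{\psi,\omega_S}f,f\rangle_{h_\nu,\omega_S}$, then extract a weak limit by the diagonal argument and conclude with lower semicontinuity and monotone convergence --- is exactly the increasing-sequence mechanism the paper invokes and uses verbatim in its other weak-limit proofs (Proposition \ref{L2-estimate for (p,n)-forms on Stein}, Theorem \ref{L^2-estimate completeness (p,n)-forms}), so it matches the intended route.
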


By using this proposition, we prove Proposition \ref{dual Nakano semi-posi sing then has L2-estimate condition}.

\vspace{3mm}

$\textit{Proof of Proposition \ref{dual Nakano semi-posi sing then has L2-estimate condition}}.$
Let $S$ be a Stein coordinate such that $E|_S$ is trivial. From Proposition \ref{sequence of dual Nakano semi-positive}, it is sufficient to show that 
there exists a sequence of smooth dual Nakano semi-positive metrics $(h_\nu)_{\nu\in\mathbb{N}}$ over any relatively compact subset of $S$ increasing to $h$ pointwise.

Here, $h^*$ is Nakano semi-negative singular Hermitian metric on $E^*$ over $S$. We define a sequence of smooth Hermitian metrics $(h_\nu^*)_{\nu\in\mathbb{N}}$ approximating $h^*$ by a convolution of $h^*$ with an approximate identity.
In other words, let $h^*_\nu:=h^*\ast\rho_\nu$ where $\rho_\nu$ is an approximate identity, i.e.\,$\rho\in\mathcal{D}(S)$ with $\rho\geq0,\,\rho(z)=\rho(|z|),\,\int\rho=1$ and $\rho_\nu(z)=\nu^n\rho(\nu z)$.
From Griffiths semi-negativity of $h^*$, each $h^*_\nu$ is Griffiths semi-negative and $(h_\nu^*)_{\nu\in\mathbb{N}}$ is decreasing to $h^*$ pointwise (cf.\,[BP08,\,Proposition\,3.1],\,[Rau15,\,Proposition\,1.3]).

Finally, we show that $h^*_\nu$ is Nakano semi-negative (cf.\,\cite{Rau15}). For any $n$-tuple of holomorphic sections $u=(u_1,\cdots,u_n)$ of $E$, we have locally that
\begin{align*}
    (u_j,u_k)_{h^*_\nu}(z)=\int(u_j,u_k)_{h^*_w}(z)\rho_\nu(w)dV_w,
\end{align*}
where $h^*_w(z):=h^*(z-w)$ and that 
\begin{align*}
    T^{h^*_\nu}_u(z)=\int T^{h^*_w}_u(z)\rho_\nu(w)dV_w.
\end{align*}

Since Nakano semi-negativity of $h^*_w$, for any test form $\phi\in\mathcal{D}(S)$ we have that 
\begin{align*}
    \idd T^{h^*_\nu}_u(\phi)&=\int \phi \idd T^{h^*_\nu}_u=\int T^{h^*_\nu}_u\wedge\idd\phi\\
    &=\int_z \Bigl\{\int_w T^{h^*_w}_u(z)\rho_\nu(w)dV_w\Bigr\}\wedge\idd\phi\\
    &=\int_w \Bigl\{\int_z T^{h^*_w}_u(z)\wedge\idd\phi\Bigr\}\rho_\nu(w)dV_w\\
    &=\int \idd T^{h^*_w}_u(\phi)\rho_\nu(w)dV_w\geq0,
\end{align*}
where $\idd T^{h^*_w}_u(\phi)\geq0$. Hence, $T^{h^*_\nu}_u$ is plurisubharmonic i.e. $h^*_\nu\leq_{Nak}0$ and 
we let $h_\nu:=(h^*_\nu)^*$ then $(h_\nu)_{\nu\in\mathbb{N}}$ is a sequence of smooth dual Nakano semi-positive metrics satisfying the necessary conditions.
\qed\\

Here, for convenience, we also introduce the following notion for strictly dual Nakano positivity using $L^2$-estimates.
Let $(X,\omega_X)$ be a \kah manifold, we say that $h$ is $L^2$-$\textit{type}$ $\textit{strictly dual Nakano}$ $\delta_\omega$-$\textit{positive}$  
if for any open subset $U$ and any \kah potential $\varphi$ of $\omega_X$ on $U$, $he^{\delta\varphi}$ is $L^2$-type dual Nakano semi-positive on $U$.

From Proposition \ref{dual Nakano semi-posi sing then has L2-estimate condition}, we immediately obtain the following two facts:
\begin{itemize}
    \item  Let $\omega_X$ be a \kah metric on $X$ and $h$ be a singular Hermitian metric on $E$. If $h$ is strictly dual Nakano $\delta_{\omega_X}$-positive 
    then $h$ is $L^2$-type strictly dual Nakano $\delta_{\omega_X}$-positive. 
    \item  Theorem \ref{L2-estimate for dual Nakano positive} holds under the weaker assumption that $h$ is $L^2$-type strictly dual Nakano $\delta_{\omega_X}$-positive from its proof.
\end{itemize}

\vspace{3mm}

Finally, using these two propositions, we obtain the following two theorems which is a generalization of Demailly-Skoda type theorem (see.\,\cite{DS79},\,\cite{LSY13},\,Theorem \ref{smooth Grif then Nak for tensor}). 
These theorems were shown in \cite{Ina21b} up to the Nakano (semi-)positive case, and can be shown for the dual Nakano (semi-)positive case in almost the same way using Proposition \ref{sequence of dual Nakano semi-positive}.

\begin{theorem}\label{sing Grif then Nakano for tensor}
    Let $h$ be a singular Hermitian metric on $E$. If $h$ is Griffiths semi-positive 
    then $(E\otimes\mathrm{det}\,E,h\otimes\mathrm{det}\,h)$ is Nakano semi-positive in the sense of Definition \ref{def Nakano semi-posi sing} (i.e.\,$L^2$-type) and $L^2$-type dual Nakano semi-positive.
\end{theorem}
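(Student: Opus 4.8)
The plan is to reduce to the smooth case by an approximation and then invoke the monotonicity results already recorded in Propositions \ref{sequence of Nakano semi-positive} and \ref{sequence of dual Nakano semi-positive}. Both conclusions — Nakano semi-positivity of $h\otimes\mathrm{det}\,h$ in the sense of Definition \ref{def Nakano semi-posi sing} and $L^2$-type dual Nakano semi-positivity — are local conditions on Stein coordinates $S$ over which $E|_S$, hence $(E\otimes\mathrm{det}\,E)|_S$, is trivial, so it suffices to produce, over any relatively compact subset of such an $S$, a sequence of smooth metrics on $E\otimes\mathrm{det}\,E$ that is Nakano semi-positive (resp. dual Nakano semi-positive) and increases pointwise to $h\otimes\mathrm{det}\,h$.

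First I would set up the approximation exactly as in the proof of Proposition \ref{dual Nakano semi-posi sing then has L2-estimate condition}. Since $h$ is Griffiths semi-positive, the dual metric $h^*$ on $E^*$ is Griffiths semi-negative, i.e. $|u|_{h^*}$ is plurisubharmonic for every local holomorphic section $u$ of $E^*$; in particular the coefficients of $h^*$ are locally bounded. Taking $h^*_\nu:=h^*\ast\rho_\nu$ for an approximate identity $\rho_\nu$ as before, each $h^*_\nu$ is a smooth Griffiths semi-negative Hermitian metric (smoothness and nondegeneracy because $h^*$ is positive definite almost everywhere and the convolution averages it), and $(h^*_\nu)_{\nu\in\mathbb{N}}$ decreases pointwise to $h^*$ (cf.\,[BP08,\,Proposition\,3.1],\,[Rau15,\,Proposition\,1.3]). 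Dualizing, $h_\nu:=(h^*_\nu)^*$ is a sequence of smooth Griffiths semi-positive Hermitian metrics on $E$ increasing pointwise to $h$.

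Next I would feed each $h_\nu$ into the smooth Demailly--Skoda theorem. By Theorem \ref{smooth Grif then Nak for tensor}, $(E\otimes\mathrm{det}\,E,\,h_\nu\otimes\mathrm{det}\,h_\nu)$ is both Nakano semi-positive and dual Nakano semi-positive in the smooth sense, for every $\nu$. Since $h_\nu$ increases to $h$ as Hermitian forms and $\mathrm{det}\,h_\nu$ increases to $\mathrm{det}\,h$ as positive functions, the metrics $h_\nu\otimes\mathrm{det}\,h_\nu$ increase pointwise to $h\otimes\mathrm{det}\,h$ on $E\otimes\mathrm{det}\,E$. Applying Proposition \ref{sequence of Nakano semi-positive} to the sequence $(h_\nu\otimes\mathrm{det}\,h_\nu)_{\nu\in\mathbb{N}}$ then gives that $h\otimes\mathrm{det}\,h$ is Nakano semi-positive in the sense of Definition \ref{def Nakano semi-posi sing}, and applying Proposition \ref{sequence of dual Nakano semi-positive} to the same sequence gives that $h\otimes\mathrm{det}\,h$ is $L^2$-type dual Nakano semi-positive. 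Since these properties are local, this completes the argument.

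The routine-but-essential point to check is the monotonicity claim $h_\nu\otimes\mathrm{det}\,h_\nu\nearrow h\otimes\mathrm{det}\,h$: one must verify both that $\mathrm{det}\,h_\nu\nearrow\mathrm{det}\,h$ pointwise (continuity of the determinant together with monotonicity of $h_\nu$, the bound $0<\mathrm{det}\,h<+\infty$ a.e.\ keeping the limit a genuine singular metric on $\mathrm{det}\,E$) and that the pointwise product of two increasing sequences of positive Hermitian forms is again increasing, so that Propositions \ref{sequence of Nakano semi-positive} and \ref{sequence of dual Nakano semi-positive} apply verbatim. The only genuine subtlety is that Definition \ref{def Nakano semi-posi sing} presupposes Griffiths semi-positivity of the limiting metric; this is automatic, since $h\otimes\mathrm{det}\,h$ is a pointwise increasing limit of smooth Griffiths semi-positive metrics and Griffiths semi-positivity is preserved under such limits. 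I do not expect any essential new difficulty beyond these bookkeeping checks — the whole content has been packaged into the smooth Demailly--Skoda statement (Theorem \ref{smooth Grif then Nak for tensor}) and the two approximation propositions.
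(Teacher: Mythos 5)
Your proposal is correct and follows exactly the route the paper intends: the paper gives no written proof of this theorem, only the remark that it follows from \cite{Ina21b} for the Nakano case and "in almost the same way" for the dual Nakano case via Proposition \ref{sequence of dual Nakano semi-positive}, and your argument (convolve $h^*$ to get smooth Griffiths semi-positive $h_\nu\nearrow h$, apply the smooth Demailly--Skoda theorem to each $h_\nu\otimes\mathrm{det}\,h_\nu$, check monotonicity, then invoke Propositions \ref{sequence of Nakano semi-positive} and \ref{sequence of dual Nakano semi-positive}) is precisely that sketch filled in. The bookkeeping points you flag (monotonicity of $\mathrm{det}\,h_\nu$ and of the tensor product, Griffiths semi-positivity of the limit) are handled correctly.
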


\begin{theorem}\label{sing strictly Grif then strictly Nakano for tensor}
    Let $\omega_X$ be a \kah metric on $X$ and $h$ be a singular Hermitian metric on $E$. If $h$ is strictly Griffiths $\delta_{\omega_X}$-positive 
    then $(E\otimes\mathrm{det}\,E,h\otimes\mathrm{det}\,h)$ is strictly Nakano $(r+1)\delta_{\omega_X}$-positive in the sense of Definition \ref{sing strictly positive of Grif and Nakano} (i.e.\,$L^2$-type) and $L^2$-type strictly dual Nakano $(r+1)\delta_{\omega_X}$-positive.
\end{theorem}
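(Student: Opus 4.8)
The plan is to reduce the statement to the smooth Demailly--Skoda theorem (Theorem \ref{smooth Grif then Nak for tensor}) via an approximation argument, exactly parallel to the proof of Theorem \ref{sing Grif then Nakano for tensor} but keeping track of the twisting by a \kah potential. First I would fix an open subset $U$ and a \kah potential $\varphi$ of $\omega_X$ on $U$; shrinking $U$ if necessary, I may assume $U$ is a Stein coordinate on which $E$ is trivial and $\varphi$ is a smooth strictly plurisubharmonic function. The hypothesis that $h$ is strictly Griffiths $\delta_{\omega_X}$-positive means $he^{\delta\varphi}$ is Griffiths semi-positive on $U$. Since Griffiths semi-positivity of a singular metric is Griffiths semi-negativity of the dual, I would pass to $(he^{\delta\varphi})^* = h^*e^{-\delta\varphi}$ on $E^*$ and regularize: using a convolution with an approximate identity $\rho_\nu$ as in the proof of Proposition \ref{dual Nakano semi-posi sing then has L2-estimate condition}, I obtain a decreasing sequence of smooth Griffiths semi-negative metrics $g_\nu \searrow h^*e^{-\delta\varphi}$ on any relatively compact subset of $U$. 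Dualizing back, $g_\nu^* \nearrow he^{\delta\varphi}$ is an increasing sequence of smooth Griffiths semi-positive metrics on $E$.

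Next I would apply the smooth Demailly--Skoda theorem (Theorem \ref{smooth Grif then Nak for tensor}) to each $(E, g_\nu^*)$: since $g_\nu^*$ is smooth Griffiths semi-positive, $(E\otimes\det E,\, g_\nu^*\otimes\det g_\nu^*)$ is both Nakano semi-positive and dual Nakano semi-positive. Now I would unwind the twisting. Writing $\det(g_\nu^*) = \det(h_\nu)e^{r\delta\varphi}$ where $h_\nu := g_\nu^*$, the metric on $E\otimes\det E$ is $h_\nu\otimes\det h_\nu \cdot e^{(r+1)\delta\varphi}$; equivalently, this smooth Nakano (resp.\ dual Nakano) semi-positive metric is exactly $(h_\nu\otimes\det h_\nu)e^{(r+1)\delta\varphi}$. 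Since $h_\nu\otimes\det h_\nu \nearrow h\otimes\det h$ pointwise (products and determinants of increasing positive Hermitian forms increase), the metrics $(h_\nu\otimes\det h_\nu)e^{(r+1)\delta\varphi}$ form an increasing sequence of smooth Nakano semi-positive and smooth dual Nakano semi-positive metrics converging pointwise to $(h\otimes\det h)e^{(r+1)\delta\varphi}$ on relatively compact subsets of $U$. Applying Proposition \ref{sequence of Nakano semi-positive} gives that $(h\otimes\det h)e^{(r+1)\delta\varphi}$ is Nakano semi-positive in the sense of Definition \ref{def Nakano semi-posi sing}, and applying Proposition \ref{sequence of dual Nakano semi-positive} gives that it is $L^2$-type dual Nakano semi-positive — on $U$. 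Since $U$ and $\varphi$ were arbitrary, this is precisely the assertion that $h\otimes\det h$ is strictly Nakano $(r+1)\delta_{\omega_X}$-positive (resp.\ $L^2$-type strictly dual Nakano $(r+1)\delta_{\omega_X}$-positive) in the sense of Definition \ref{sing strictly positive of Grif and Nakano} (resp.\ the $L^2$-type analogue).

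The main obstacle I anticipate is bookkeeping rather than any deep new idea: one must check carefully that the convolution regularization of $h^*e^{-\delta\varphi}$ interacts correctly with the smooth, strictly plurisubharmonic weight $\varphi$ — in particular that the regularized metrics remain Griffiths semi-negative and decrease to the right limit on relatively compact subsets, and that after dualizing and tensoring with determinants the factor $e^{(r+1)\delta\varphi}$ is correctly extracted so that the approximating sequence is genuinely of the form (smooth metric)$\cdot e^{(r+1)\delta\varphi}$ with the smooth metric increasing to $h\otimes\det h$. A secondary point is that Propositions \ref{sequence of Nakano semi-positive} and \ref{sequence of dual Nakano semi-positive} are stated for globally defined increasing sequences, whereas the regularization is only defined on relatively compact subsets; but since Nakano and $L^2$-type dual Nakano semi-positivity (and hence strict positivity relative to a potential) are local properties, exhausting $U$ by relatively compact subsets and using a standard diagonal extraction resolves this. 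No genuinely new analytic input beyond what is already in the excerpt is needed.
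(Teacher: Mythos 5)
Your argument is correct and is essentially the proof the paper intends: the paper only sketches this theorem, citing the Nakano case from [Ina22] and noting that the dual Nakano case follows ``in almost the same way'' via Proposition \ref{sequence of dual Nakano semi-positive} --- i.e.\ exactly your scheme of regularizing the dual metric by convolution, applying the smooth Demailly--Skoda theorem (Theorem \ref{smooth Grif then Nak for tensor}) to each approximant, extracting the factor $e^{(r+1)\delta\varphi}$, and passing to the limit with Propositions \ref{sequence of Nakano semi-positive} and \ref{sequence of dual Nakano semi-positive}. (One notational slip: you should set $h_\nu:=g_\nu^*e^{-\delta\varphi}$ rather than $h_\nu:=g_\nu^*$; the subsequent identity $g_\nu^*\otimes\det g_\nu^*=(h_\nu\otimes\det h_\nu)e^{(r+1)\delta\varphi}$ already uses the correct definition.)
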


\section{Proofs of Theorem \ref{Ext sing Griffiths V-thm} and \ref{sing dual Nakano V-thm}}

In this section, we get the proofs of Theorem \ref{Ext sing Griffiths V-thm} and \ref{sing dual Nakano V-thm}. 
First, we prove the following theorem and corollary, which is an extension of Theorem \ref{Hor Thm 4.4.2} to holomorphic vector bundles, to show these theorems.

\begin{theorem}\label{Ext Hor Thm to vect bdl}
    Let $X$ be a complex manifold and $E$ be a holomorphic vector bundle over $X$ equipped with a singular Hermitian metric $h$.
    We assume that $h$ is Griffiths semi-positive on $X$. Then for any $x_0\in X$, there exist an open neighborhood $U$ of $x_0$ and a \kah metric $\omega$ on $U$ satisfying that 
    for any $\overline{\partial}$-closed $f\in L^2_{p,q}(U,E\otimes\mathrm{det}\,E,h\otimes\mathrm{det}\,h,\omega)$, there exists $u\in L^2_{p,q-1}(U,E\otimes\mathrm{det}\,E,h\otimes\mathrm{det}\,h,\omega)$ such that $\overline{\partial}u=f$.
\end{theorem}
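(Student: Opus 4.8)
The plan is to reduce to the smooth case via the Demailly–Skoda mechanism (Theorem \ref{smooth Grif then Nak for tensor}) together with the approximation from the proof of Proposition \ref{dual Nakano semi-posi sing then has L2-estimate condition}. Since Griffiths semi-positivity is a local notion and $h$ is only singular, I would first fix $x_0$ and choose a small Stein (say biholomorphic to a ball) open neighborhood $U$ of $x_0$ on which $E$ is trivial and which carries a Kähler potential; shrink $U$ so that $\det h$ is bounded above and below there. The key input is that on such a $U$ the singular Griffiths semi-positive metric $h$ can be written as an increasing limit of smooth Griffiths semi-positive metrics $h_\nu$ on relatively compact subsets: indeed $h^*$ is Griffiths semi-negative, so its convolutions $h^*_\nu=h^*\ast\rho_\nu$ are smooth Griffiths semi-negative and decrease to $h^*$ pointwise, and then $h_\nu:=(h^*_\nu)^*$ are smooth Griffiths semi-positive and increase to $h$ pointwise. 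This is exactly the construction already carried out in the proof of Proposition \ref{dual Nakano semi-posi sing then has L2-estimate condition}.

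Next, for each $\nu$ the smooth bundle $(E\otimes\det E, h_\nu\otimes\det h_\nu)$ is Nakano semi-positive (and dual Nakano semi-positive) on $U'\Subset U$ by Theorem \ref{smooth Grif then Nak for tensor}. Shrinking further if necessary, I would pick a Kähler potential $\phi$ on $U$ with $\idd\phi=\omega$ strictly plurisubharmonic, so that on each $U'$ the twisted metric $h_\nu\otimes\det h_\nu\,e^{-\phi}$ has positive definite curvature operator $A^{p,q}_{\cdot}\geq B_{\phi,\omega}>0$ on $(p,q)$-forms; then Hörmander's $L^2$-estimate on the Stein manifold $U'$ (using a complete Kähler metric on $U'$ and the monotonicity Lemma \ref{uniformity of (p,n)-forms}/\ref{uniformity of (p,q)-forms not dV} to pass to $\omega$, exactly as in the proof of Proposition \ref{L2-estimate for (p,n)-forms on Stein}) produces a solution $u_\nu$ of $\overline{\partial}u_\nu=f$ with a bound $\int_{U'}|u_\nu|^2_{h_\nu\otimes\det h_\nu,\omega}\,dV_\omega \leq C\int_{U'}|f|^2_{h\otimes\det h,\omega}\,dV_\omega$, where the right side is controlled because $h_\nu\leq h$ forces $|f|^2_{h_\nu\otimes\det h_\nu}\leq |f|^2_{h\otimes\det h}$ up to the bounded factor coming from $\det h_\nu$. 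Since $h_\nu$ increases to $h$, the sequence $(u_\nu)$ is uniformly bounded in $L^2(U',h_{\nu_0}\otimes\det h_{\nu_0},\omega)$ for any fixed $\nu_0$; extracting a weak limit $u$ and applying the monotone convergence theorem along $\nu\to\infty$ gives $u\in L^2_{p,q-1}(U',E\otimes\det E,h\otimes\det h,\omega)$ with $\overline{\partial}u=f$. Finally I would relabel $U'$ as the promised neighborhood $U$.

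The main obstacle is bookkeeping the three nested limits and the metric comparisons cleanly: one must simultaneously (i) restrict to relatively compact $U'$ where the smooth approximants live, (ii) invoke a complete Kähler metric on $U'$ and let its scaling parameter tend to zero to recover the estimate for the (non-complete) $\omega$, and (iii) let the convolution parameter $\nu\to\infty$, at each stage using the monotonicity of the norms $|\bullet|^2_{h_\nu}$ and of $|\bullet|^2_{\omega_\delta}$ to get weak compactness in a fixed Hilbert space, together with the monotone convergence theorem to recover the limit estimate. None of these steps is new — they are all present in the proofs of Proposition \ref{L2-estimate for (p,n)-forms on Stein} and Proposition \ref{dual Nakano semi-posi sing then has L2-estimate condition} — but assembling them in the right order is the delicate part. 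Note that no quantitative bound on $\|u\|$ is asked for in the statement, which simplifies the last limit: only solvability of $\overline{\partial}u=f$ in the weighted $L^2$ space is required, so one can afford to lose constants freely. I would also remark that, alternatively, one may bypass Theorem \ref{smooth Grif then Nak for tensor} and instead invoke Theorem \ref{sing Grif then Nakano for tensor} directly to know $h\otimes\det h$ is $L^2$-type Nakano semi-positive, then feed that into the local $L^2$-estimate as in Definition \ref{def Nakano semi-posi sing}; but the approximation route is the most self-contained.
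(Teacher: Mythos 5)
Your overall strategy (convolution approximation of $h^*$, Demailly--Skoda to pass from Griffiths to Nakano positivity of $h_\nu\otimes\det h_\nu$, then a weighted $L^2$-estimate and a limit) matches the spirit of the paper, but there is a genuine gap at the central analytic step, and it is exactly the point the theorem is about. You claim that after twisting by $e^{-\phi}$ with $\idd\phi=\omega$ the curvature operator satisfies $A^{p,q}_{\cdot}\geq B_{\phi,\omega}>0$ on $(p,q)$-forms. This is false for general $(p,q)$: on $(p,q)$-forms one has $[\omega\otimes\mathrm{id},\Lambda_\omega]=(p+q-n)\,\mathrm{id}$, so $B_{\phi,\omega}\leq 0$ whenever $p+q\leq n$, and the theorem must cover, e.g., small $p$ and $q=1$. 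Even in the range $p+q>n$ with $p<n$, Nakano (or dual Nakano) semi-positivity of $h_\nu\otimes\det h_\nu$ does not give $A^{p,q}\geq 0$; the equivalences (a) and (b) in the paper only control the bidegrees $(n,q)$ and $(p,n)$, and the basic pointwise inequality for $[i\Theta,\Lambda_\omega]$ on $(p,q)$-forms contains negative terms indexed by $j\notin J$. Your alternative remark (feed $L^2$-type Nakano semi-positivity of $h\otimes\det h$ into Definition \ref{def Nakano semi-posi sing}) has the same problem: that definition only yields estimates for $(n,q)$-forms, not $(p,q)$-forms.

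The missing idea in the paper's proof is a bidegree shift: one fixes a \emph{bounded} K\"ahler potential $\psi$ of $\omega$ on $U$, sets $F:=E\otimes\det E\otimes\Lambda^{n-p}T_U$ with the metric $h_F:=h\otimes\det h\otimes(I\,e^{-\psi})$, and uses the isomorphism $\Lambda^{p,q}T^*_U\otimes E\otimes\det E\cong\Lambda^{n,q}T^*_U\otimes F$ to regard $f$ as an $(n,q)$-form with values in $F$. The factor $e^{-\psi}$ on the trivial bundle $\Lambda^{n-p}T_U$ makes $(F,h_F)$ strictly Nakano $1_\omega$-positive (checked on the smooth approximants $h_{F,\nu}e^{\varphi}$, whose extra curvature $\idd(-\psi+\varphi)$ vanishes, and then passed to the limit by Proposition \ref{sequence of Nakano semi-positive}), while the boundedness of $\psi$ makes the norms $|\cdot|^2_{h_F,\omega}=|\cdot|^2_{h\otimes\det h,\omega}e^{-\psi}$ uniformly comparable. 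One then applies the $(n,q)$ singular Nakano $L^2$-estimate (Theorem \ref{L2-estimate for Nakano positive}) to $(F,h_F)$ and translates back. Without this twist by $\Lambda^{n-p}T_U$ your argument cannot produce the required positivity of the curvature operator in bidegree $(p,q)$, so the proposal as written does not prove the theorem.
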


\begin{proof}
    For any $x_0 \in X$, there exist a bounded Stein neighborhood $U$ of $x_0$ such that $E|_U$ and $T_U$ are trivial and a sequence of smooth Griffiths positive metrics $(h_\nu)_{\nu\in\mathbb{N}}$ on $U$ increasing to $h$ pointwise (see [BP08, Proposition 3.1], \cite{Rau15}). 
    Here, $(E\otimes\mathrm{det}\,E,h_\nu\otimes\mathrm{det}\,h_\nu)$ is Nakano positive and $(E\otimes\mathrm{det}\,E,h\otimes\mathrm{det}\,h)$ is Nakano semi-positive in the sense of Definition \ref{def Nakano semi-posi sing} by Proposition \ref{sequence of Nakano semi-positive}. 
    We fix a bounded \kah potential $\psi$ of $\omega$ on $U$ and define the trivial vector bundle $F:=E\otimes\mathrm{det}\,E\otimes\Lambda^{n-p}T_U$ over $U$, where $\Lambda^{p,q}T^*_U\otimes E\otimes\mathrm{det}\,E\cong\Lambda^{n,q}T^*_U\otimes F$. 
    Let $I_{T_U^p}$ be a trivial Hermitian metric and $h_{T_U^p}:=I_{T_U^p}e^{-\psi}$ be a smooth Nakano positive metric on a trivial vector bundle $\Lambda^{n-p}T_U$. 
    
    Define a singular Hermitian metric $h_F:=h\otimes\mathrm{det}\,h\otimes h_{T^p_U}$ on trivial vector bundle $F$ over $U$. Then we have that $(F,h_F)$ is strictly Nakano $1_\omega$-positive on $U$ in the sense of Definition \ref{sing strictly positive of Grif and Nakano}. 
    This is enough to show that for any \kah potential $\varphi$ of $\omega$, $h_Fe^\varphi$ is Nakano semi-positive on $U$ in the sense of Definition \ref{def Nakano semi-posi sing}. Let $h_{F,\nu}:=h_\nu\otimes\mathrm{det}\,h_\nu\otimes h_{T^p_U}$ be a smooth Hermitian metric on $F$ then $h_{F,\nu}e^\varphi$ is Nakano semi-positive.
    In fact, from $h_{F,\nu}e^\varphi=h_\nu\otimes\mathrm{det}\,h_\nu\otimes I_{T_U^p}e^{-\psi+\varphi}$ and Nakano positivity of $h_\nu\otimes\mathrm{det}\,h_\nu$, we get 
    \begin{align*}
        i\Theta_{F,h_{F,\nu}e^\varphi}&=i\Theta_{E\otimes\mathrm{det}\,E,h_\nu\otimes\mathrm{det}\,h_\nu}\otimes\mathrm{id}_{\Lambda^{n-p}T_U}+\idd(-\psi+\varphi)\otimes\mathrm{id}_F\\
        &=i\Theta_{E\otimes\mathrm{det}\,E,h_\nu\otimes\mathrm{det}\,h_\nu}\otimes\mathrm{id}_{\Lambda^{n-p}T_U}\geq_{Nak}0.
    \end{align*}
    Therefore $(h_{F,\nu}e^\varphi)_{\nu\in\mathbb{N}}$ is a sequence of smooth Nakano semi-positive metric on $U$ increasing to $h_Fe^\varphi$. From Proposition \ref{sequence of Nakano semi-positive}, we have that $h_Fe^\varphi$ is Nakano semi-positive on $U$ in the sense of Definition \ref{def Nakano semi-posi sing}.

    For any $(p,q)$-form $v$ with values in $E\otimes\mathrm{det}\,E$, the form $v$ is considered $(n,q)$-form with values in $F$ and we have that $|v|^2_{h_F,\omega}=|v|^2_{h\otimes\mathrm{det}\,h,\omega}e^{-\psi}$. In fact, we can write
    \begin{align*}
        \omega&=\sum dz_j\wedge d\overline{z}_j, \\
        v&=\sum v_{IJ\lambda}dz_I\wedge d\overline{z}_J\otimes e_\lambda\in \Lambda^{p,q}T^*_U\otimes E\otimes\mathrm{det}\,E
    \end{align*}
    \begin{align*}
        &=\sum v_{IJ\lambda}dz_N\wedge d\overline{z}_J\otimes \frac{\partial}{\partial z_{N\setminus I}} \otimes e_\lambda\in \Lambda^{n,q}T^*_U\otimes \Lambda^{n-p}T_U\otimes E\otimes\mathrm{det}\,E\\
        &\qquad\qquad\qquad\qquad\qquad\qquad\qquad\quad\,\cong \Lambda^{n,q}T^*_U\otimes F,
    \end{align*}
    at any fixed point, where $(e_j)_{1\leq j\leq r}$ is an orthonormal basis of $E\otimes\mathrm{det}\,E$. Then we get 
    \begin{align*}
        |v|^2_{h_F,\omega}&=|v|^2_{h\otimes\mathrm{det}\,h\otimes I_{T^p_U},\omega}e^{-\psi}=\sum v_{IJ\lambda}\overline{v}_{KJ\mu}(h\otimes\mathrm{det}\,h)_{\lambda\mu}\delta_{N\setminus I,N\setminus K}e^{-\psi}\\
        &=\sum v_{IJ\lambda}\overline{v}_{IJ\mu}(h\otimes\mathrm{det}\,h)_{\lambda\mu}e^{-\psi}=|v|^2_{h\otimes\mathrm{det}\,h,\omega}e^{-\psi},
    \end{align*}
    where $\delta_{IK}=\Pi_j \delta_{i_jk_j}$ is multi-Kronecker's delta. 

    By using the boundedness of $\psi$ on $U$, for any $\overline{\partial}$-closed $f\in L^2_{p,q}(U,E\otimes\mathrm{det}\,E,h\otimes\mathrm{det}\,h,\omega)$, we have that 
    \begin{align*}
        \int_U|f|^2_{h_F,\omega}dV_\omega=\int_U|f|^2_{h\otimes\mathrm{det}\,h,\omega}e^{-\psi}dV_\omega\leq\sup_Ue^{-\psi}\int_U|f|^2_{h\otimes\mathrm{det}\,h,\omega}dV_\omega<+\infty,
    \end{align*}
    i.e. $f\in L^2_{n,q}(U,F,h_F,\omega)$, where $\sup_Ue^{-\psi}<+\infty$. Therefore, from strictly Nakano $1_\omega$-positivity of $(F,h_F)$ in the sense of Definition \ref{sing strictly positive of Grif and Nakano} and Theorem \ref{L2-estimate for Nakano positive}, there exists $u\in L^2_{n,q-1}(U,F,h_F,\omega)$ such that $\overline{\partial}u=f$ and 
    \begin{align*}
        \int_U|u|^2_{h_F,\omega}dV_\omega\leq\frac{1}{q}\int_U|f|^2_{h_F,\omega}dV_\omega
        <+\infty.
    \end{align*}
    Repeating the above argument, we have that 
    \begin{align*}
        +\infty>\int_U|u|^2_{h_F,\omega}dV_\omega=\int_U|u|^2_{h\otimes\mathrm{det}\,h,\omega}e^{-\psi}dV_\omega\geq\inf_Ue^{-\psi}\int_U|u|^2_{h\otimes\mathrm{det}\,h,\omega}dV_\omega,
    \end{align*}
    i.e. $u\in L^2_{p,q-1}(U,E\otimes\mathrm{det}\,E,h\otimes\mathrm{det}\,h,\omega)$ and the following $L^2$-estimate
    \begin{align*}
        \int_U|u|^2_{h\otimes\mathrm{det}\,h,\omega}dV_\omega\leq\frac{1}{q}\frac{\sup_Ue^{-\psi}}{\inf_Ue^{-\psi}}\int_U|f|^2_{h\otimes\mathrm{det}\,h,\omega}dV_\omega,
    \end{align*}
    where the right-hand side is finite.
\end{proof}

\begin{corollary}\label{Cor ext Hor thm to vect bdl}
    Let $X$ be a complex manifold and $E$ be a holomorphic vector bundle over $X$ equipped with a singular Hermitian metric $h$.
    We assume that $h$ is Griffiths semi-positive on $X$ and that $\mathrm{det}\,h$ is bounded on $X$. 
    Then for any $x_0\in X$, there exist an open neighborhood $U$ of $x_0$ and a \kah metric $\omega$ on $U$ satisfying that 
    for any $\overline{\partial}$-closed $f\in L^2_{p,q}(U,E,h,\omega)$, there exists $u\in L^2_{p,q-1}(U,E,h,\omega)$ such that $\overline{\partial}u=f$.
\end{corollary}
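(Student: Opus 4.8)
The plan is to reduce Corollary~\ref{Cor ext Hor thm to vect bdl} directly to Theorem~\ref{Ext Hor Thm to vect bdl} by twisting away the $\mathrm{det}\,E$ factor, using the boundedness of $\mathrm{det}\,h$ to control the resulting change in the $L^2$-norms. First I would apply Theorem~\ref{Ext Hor Thm to vect bdl} at the point $x_0$ to obtain a bounded Stein neighborhood $U$ of $x_0$ (which we may shrink so that $E|_U$, $T_U$ and $(\mathrm{det}\,E)|_U$ are all trivial) together with a \kah metric $\omega$ on $U$ such that the $\overline{\partial}$-equation is solvable in $L^2_{p,\bullet}(U,E\otimes\mathrm{det}\,E,h\otimes\mathrm{det}\,h,\omega)$. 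The point is that over $U$ the line bundle $\mathrm{det}\,E$ is holomorphically trivial, so a choice of nowhere-vanishing holomorphic frame $\sigma$ of $\mathrm{det}\,E$ over $U$ induces an isomorphism $E\xrightarrow{\sim}E\otimes\mathrm{det}\,E$, $s\mapsto s\otimes\sigma$, and correspondingly an isometry-up-to-weight between $(E,h,\omega)$ and $(E\otimes\mathrm{det}\,E,h\otimes\mathrm{det}\,h,\omega)$: concretely, for a section (or form) $v$ with values in $E$ one has $|v\otimes\sigma|^2_{h\otimes\mathrm{det}\,h,\omega}=|v|^2_{h,\omega}\cdot(\mathrm{det}\,h)(\sigma,\sigma)$, and the scalar function $\theta:=(\mathrm{det}\,h)(\sigma,\sigma)$ is the weight of $\mathrm{det}\,h$ in the frame $\sigma$.

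Next I would translate the given data: starting from $\overline{\partial}$-closed $f\in L^2_{p,q}(U,E,h,\omega)$, the form $\tilde f:=f\otimes\sigma$ is $\overline{\partial}$-closed (since $\sigma$ is holomorphic) with values in $E\otimes\mathrm{det}\,E$, and $\int_U|\tilde f|^2_{h\otimes\mathrm{det}\,h,\omega}dV_\omega=\int_U|f|^2_{h,\omega}\,\theta\,dV_\omega\leq(\sup_U\theta)\int_U|f|^2_{h,\omega}dV_\omega<+\infty$, where $\sup_U\theta<+\infty$ because $\mathrm{det}\,h$ is assumed bounded on $X$ (and $\sigma$ is a fixed holomorphic frame on the relatively compact $U$). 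So $\tilde f\in L^2_{p,q}(U,E\otimes\mathrm{det}\,E,h\otimes\mathrm{det}\,h,\omega)$, and Theorem~\ref{Ext Hor Thm to vect bdl} yields $\tilde u\in L^2_{p,q-1}(U,E\otimes\mathrm{det}\,E,h\otimes\mathrm{det}\,h,\omega)$ with $\overline{\partial}\tilde u=\tilde f$. Writing $\tilde u=u\otimes\sigma$ for a unique $E$-valued $(p,q-1)$-form $u$ (here we use that $\sigma$ is nowhere vanishing), we get $\overline{\partial}u=f$ over $U$.

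The remaining point is the $L^2$-bound for $u$, and here is where the boundedness hypothesis on $\mathrm{det}\,h$ is genuinely used a second time: we need a \emph{lower} bound for $\theta$ so that $\int_U|u|^2_{h,\omega}dV_\omega\leq(\inf_U\theta)^{-1}\int_U|u|^2_{h,\omega}\theta\,dV_\omega=(\inf_U\theta)^{-1}\int_U|\tilde u|^2_{h\otimes\mathrm{det}\,h,\omega}dV_\omega<+\infty$. After possibly shrinking $U$ to a smaller relatively compact neighborhood of $x_0$, $\theta$ is a positive continuous function bounded below away from zero there (it is smooth and nowhere zero, being $(\mathrm{det}\,h)(\sigma,\sigma)$ with $\mathrm{det}\,h$ a genuine metric on the line bundle after the local triviality), so $\inf_U\theta>0$ and $u\in L^2_{p,q-1}(U,E,h,\omega)$ as required. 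I do not expect a serious obstacle: the only care needed is to make sure that the neighborhood $U$ furnished by Theorem~\ref{Ext Hor Thm to vect bdl} can simultaneously be taken small enough that $\mathrm{det}\,E$ is holomorphically trivial on it and that the weight $\theta$ of $\mathrm{det}\,h$ is pinched between two positive constants; since $\mathrm{det}\,h$ is a bona fide (possibly singular, but with $0<\mathrm{det}\,h<+\infty$) metric on a line bundle, once trivialized its weight is locally bounded above and below, and the global boundedness of $\mathrm{det}\,h$ on $X$ is exactly what guarantees the upper bound needed to push $f$ into the $L^2$-space where Theorem~\ref{Ext Hor Thm to vect bdl} applies.
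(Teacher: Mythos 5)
Your overall strategy is the same as the paper's: pass from $E$ to $E\otimes\mathrm{det}\,E$ over a trivializing neighborhood (the paper simply uses the isomorphism $E\otimes\mathrm{det}\,E|_U\cong E|_U$, which is your tensoring with a holomorphic frame $\sigma$ of $\mathrm{det}\,E$), use the boundedness of $\mathrm{det}\,h$ to get $f\in L^2_{p,q}(U,E\otimes\mathrm{det}\,E,h\otimes\mathrm{det}\,h,\omega)$, invoke Theorem \ref{Ext Hor Thm to vect bdl}, and then convert the solution back. The first two steps and the upper bound $\sup_U\theta<+\infty$ are fine.

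The gap is in your justification of the lower bound $\inf_U\theta>0$. You assert that $\theta=(\mathrm{det}\,h)(\sigma,\sigma)$ is ``smooth and nowhere zero'' and therefore bounded below on a relatively compact neighborhood. But $h$ is a \emph{singular} Hermitian metric: by definition $\mathrm{det}\,h$ is merely a measurable function satisfying $0<\mathrm{det}\,h<+\infty$ almost everywhere, so $\theta$ need not be continuous, and a priori $\inf_U\theta$ could be $0$ (indeed $\mathrm{det}\,h$ could tend to $0$ along a set approaching $x_0$). Continuity is not available and compactness alone gives nothing. The hypothesis that rescues the argument is the Griffiths semi-positivity of $h$, which you never use at this step: Griffiths semi-positivity of $h$ means $h^*$ is Griffiths semi-negative, whence $\log\mathrm{det}\,h^*=-\log\mathrm{det}\,h$ is plurisubharmonic and therefore locally bounded above; on a relatively compact $U$ this yields $\inf_U\mathrm{det}\,h>0$, which is exactly what the paper invokes (``By the Griffiths semi-positivity of $h$, we have that $\inf_U\mathrm{det}\,h>0$''). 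Replace your smoothness claim by this plurisubharmonicity argument and the proof closes.
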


\begin{proof}
    For any $x_0$, there exists a bounded Stein neighborhood $U$ of $x_0$ such that $E|_U$ is trivial then $E\otimes\mathrm{det}\,E|_U\cong E|_U$. By the boundedness of $\mathrm{det}\,h$ on $U$, 
    for any $\overline{\partial}$-closed $f\in L^2_{p,q}(U,E,h,\omega)$ we have that 
    \begin{align*}
        \int_U|f|^2_{h\otimes\mathrm{det}\,h,\omega}dV_\omega=\int_U|f|^2_{h,\omega}\mathrm{det}\,h\,dV_\omega\leq\sup_U\mathrm{det}\,h\int_U|f|^2_{h,\omega}dV_\omega<+\infty,
    \end{align*}
    i.e. $f\in L^2_{p,q}(U,E\otimes\mathrm{det}\,E,h\otimes\mathrm{det}\,h,\omega)$.

    From Theorem \ref{Ext Hor Thm to vect bdl}, there exists $u\in L^2_{p,q-1}(U,E\otimes\mathrm{det}\,E,h\otimes\mathrm{det}\,h,\omega)$ satisfies $\overline{\partial}u=f$.
    By the Griffiths semi-positivity of $h$, we have that $\inf_U\mathrm{det}\,h>0$ and that 
    \begin{align*}
        \inf_U\mathrm{det}\,h\int_U|u|^2_{h,\omega}dV_\omega\leq\int_U|u|^2_{h,\omega}\mathrm{det}\,h\,dV_\omega=\int_U|u|^2_{h\otimes\mathrm{det}\,h,\omega}dV_\omega<+\infty,
    \end{align*}
    i.e. $u\in L^2_{p,q-1}(U,E,h,\omega)$.
\end{proof}

Finally, we prove Theorem \ref{Ext sing Griffiths V-thm} and \ref{sing dual Nakano V-thm} by using the above result.
For singular Hermitian metrics $h$ on $E$, we define the subsheaf $\mathscr{L}^{p,q}_{E,h}$ of germs of $(p,q)$-forms $u$ with values in $E$ and with measurable coefficients such that both $|u|^2_{h}$ and $|\overline{\partial}u|^2_h$ are locally integrable.

\vspace{3mm}

$\textit{Proof of Theorem \ref{Ext sing Griffiths V-thm}}.$
    We already know from Griffiths vanishing theorem with singular Hermitian metric about the case of $(n,q)$-forms, i.e. $H^q(X,K_X\otimes \mathscr{E}(h\otimes\mathrm{det}\,h))=0$ for $q>0$ (see [Ina22,\,Theorem\,1.6]). 

    We consider the following sheaves sequence:
    \begin{align*}
        \xymatrix{
        0 \ar[r] & \mathrm{ker}\,\overline{\partial}_0 \hookrightarrow \mathscr{L}^{p,0}_{E\otimes\mathrm{det}\,E,h\otimes\mathrm{det}\,h} \ar[r]^-{\overline{\partial}_0} & \mathscr{L}^{p,1}_{E\otimes\mathrm{det}\,E,h\otimes\mathrm{det}\,h} \ar[r]^-{\overline{\partial}_1} & \cdots \ar[r]^-{\overline{\partial}_{n-1}} & \mathscr{L}^{p,n}_{E\otimes\mathrm{det}\,E,h\otimes\mathrm{det}\,h} \ar[r] & 0.
    }
    \end{align*}
    Since Theorem \ref{Ext Hor Thm to vect bdl}, we have that the above sheaves sequence is exact.

    From Lemma \ref{Regularity of currents for (p,0)-forms}, the kernel of $\overline{\partial}_0$ consists of all germs of holomorphic $(p,0)$-forms with values in $E\otimes\mathrm{det}\,E$ which satisfy the integrability condition
    and we have that $\mathrm{ker}\,\overline{\partial}_0=\Omega_X^p\otimes \mathscr{E}(h\otimes\mathrm{det}\,h)$.
    In fact, for any locally open subset $U\subset \mathbb{C}^n$ we obtain
    \begin{align*}
        f\in\mathrm{ker}\,\overline{\partial}_0(U) &\iff f=\sum f_Idz_I \in H^0(U,\Omega^p_X\otimes E\otimes\mathrm{det}\,E)~\,\, \mathrm{such~ that}  \\
        & \int_U|f|^2_{h\otimes\mathrm{det}\,h,\omega}dV_\omega=\sum_I\int_U|f_I|^2_{h\otimes\mathrm{det}\,h}dV_\omega<+\infty.
    \end{align*}
    Therefore any $f_I\in H^0(U,E\otimes\mathrm{det}\,E)$ satisfy the condition $f_I\in\mathscr{E}(h\otimes\mathrm{det}\,h)(U)$.

    Since acyclicity of each $\mathscr{L}^{p,q}_{E\otimes\mathrm{det}\,E,h\otimes\mathrm{det}\,h}$, we have that 
    \begin{align*}
        H^q(X,\Omega_X^p\otimes \mathscr{E}(h\otimes\mathrm{det}\,h))\cong H^q(\Gamma(X,\mathscr{L}^{p,\bullet}_{E\otimes\mathrm{det}\,E,h\otimes\mathrm{det}\,h})).
    \end{align*} 
    By Theorem \ref{sing strictly Grif then strictly Nakano for tensor} and \ref{L2-estimate for dual Nakano positive}, we get $H^n(\Gamma(X,\mathscr{L}^{p,\bullet}_{E\otimes\mathrm{det}\,E,h\otimes\mathrm{det}\,h}))=0$.    \qed\\

$\textit{Proof of Theorem \ref{sing dual Nakano V-thm}}.$
    We consider the following sheaves sequence:
    \begin{align*}
        \xymatrix{
        0 \ar[r] & \mathrm{ker}\,\overline{\partial}_0 \hookrightarrow \mathscr{L}^{p,0}_{E,h} \ar[r]^-{\overline{\partial}_0} & \mathscr{L}^{p,1}_{E,h} \ar[r]^-{\overline{\partial}_1} & \cdots \ar[r]^-{\overline{\partial}_{n-1}} & \mathscr{L}^{p,n}_{E,h} \ar[r] & 0.
    }
    \end{align*}
    Since Corollary \ref{Cor ext Hor thm to vect bdl}, we have that the above sheaves sequence is exact.

    Locally, we see $h=\mathrm{det}\,h\cdot \widehat{h^*}$ where $\widehat{h^*}$ is the adjugate matrix of $h^*$. Since Griffiths semi-negativity of $h^*$, each element of $\widehat{h^*}$ is locally bounded [PT18, Lemma 2.2.4].
    From the assumption $\mathrm{det}\,h$ is bounded, we get $\mathscr{E}(h)=\mathscr{O}(E)$.

    Repeating the above argument, we have that $\mathrm{ker}\,\overline{\partial}_0=\Omega_X^p\otimes \mathscr{E}(h)=\Omega_X^p\otimes \mathscr{O}(E)$.
    Since acyclicity of each $\mathscr{L}^{p,q}_{E,h}$, we have that $H^q(X,\Omega_X^p\otimes E)\cong H^q(\Gamma(X,\mathscr{L}^{p,\bullet}_{E,h}))$.
    By Theorem \ref{L2-estimate for dual Nakano positive}, we have that $H^n(\Gamma(X,\mathscr{L}^{p,\bullet}_{E,h}))=0$.    \qed\\

\begin{remark}
    Since Theorem \ref{L2-estimate for dual Nakano positive} holds, Theorem \ref{sing dual Nakano V-thm} also holds under the weaker assumption that $h$ is $L^2$-type strictly dual Nakano $\delta_{\omega_X}$-positive.
\end{remark}

    \vspace{3mm}

{\bf Acknowledgement. } 
I would like to thank my supervisor Professor Shigeharu Takayama for guidance and helpful advice. 
I would also like to thank Professor Takahiro Inayama for useful advice on the strength relationship of the definitions.

$ $

\rightline{\begin{tabular}{c}
    $\it{Yuta~Watanabe}$ \\
    $\it{Graduate~School~of~Mathematical~Sciences}$ \\
    $\it{The~University~of~Tokyo}$ \\
    $3$-$8$-$1$ $\it{Komaba, ~Meguro}$-$\it{ku}$ \\
    $\it{Tokyo, ~Japan}$ \\
    ($E$-$mail$ $address$: watayu@g.ecc.u-tokyo.ac.jp)
\end{tabular}}

\end{document}